\let\savedegree\div
\let\div\relax
\let\savering\ring
\let\ring\relax
\let\div\savedegree
\let\ring\savering
\newcommand{\mR}{\mathbb{R}}
\newcommand{\E}[1]{\mathbb{E}\left[#1\right]}
\newcommand{\bracket}[1]{\left[ #1 \right]}
\newcommand{\parentheses}[1]{\left( #1 \right)}
\newcommand{\curlyp}[1]{\left\{ #1 \right\}}
\newcommand{\EE}{\mathbb{E}}
\newcommand{\RR}{\mathbb{R}}
\newcommand{\cL}{\mathcal{L}}
\newcommand{\mc}[1]{\mathcal{#1}}
\newcommand{\mbb}[1]{\mathbb{#1}}
\newtheorem{remark}{Remark}
\newtheorem{theorem}{Theorem}
\newtheorem{lemma}{Lemma}
\newcommand\myeq[2]{\stackrel{\mathclap{\normalfont\mbox{#1}}}{#2}}
\newcommand{\bmu}{\boldsymbol{\mu}}
\DeclareMathOperator*{\argmax}{arg\,max}
\DeclareMathOperator*{\argmin}{arg\,min}
\DeclareMathOperator*{\softmin}{soft-min}
\title{Unified Reinforcement Q-Learning for Mean Field Game and Control Problems}
\author{
Andrea Angiuli\thanks{Department of Statistics and Applied Probability, University of California Santa Barbara. {\tt angiuli@pstat.ucsb.edu}} 
\and 
Jean-Pierre Fouque\thanks{
Department of Statistics and Applied Probability,
South Hall 5504,
University of California
Santa Barbara, CA 93106
{\tt fouque@pstat.ucsb.edu}. Work  supported by NSF grant DMS-1814091.
}
\and Mathieu Lauri{\`e}re\thanks{Department of Operations Research and Financial Engineering. Princeton University. {\tt lauriere@princeton.edu}. Work supported by ARO grant AWD1005491 and NSF award AWD1005433.}}
\begin{document}
\maketitle

\begin{abstract}
We present a Reinforcement Learning (RL) algorithm to solve infinite horizon asymptotic Mean Field Game (MFG) and Mean Field Control (MFC) problems. Our approach can be described as a unified two-timescale Mean Field Q-learning: The \emph{same} algorithm can learn either the MFG or the MFC solution by simply tuning the ratio of two learning parameters. The algorithm is in discrete time and space where the agent not only provides an action to the environment but also a distribution of the state in order to take into account the mean field feature of the problem. Importantly, we assume that the agent can not observe the population's distribution and needs to estimate it in a model-free manner. The asymptotic MFG and MFC problems are also presented in continuous time and space, and compared with classical (non-asymptotic or stationary) MFG and MFC problems. They lead to explicit solutions in the linear-quadratic (LQ) case that are used as benchmarks for the results of our algorithm.
\end{abstract}

\tableofcontents

\section{Introduction}
Reinforcement learning (RL) is a branch of machine learning (ML) which studies the interactions of an agent within an environment in order to maximize a reward signal. RL algorithms solve Markov Decision Processes (MDP) based on trials and errors. At each discrete time $n$, the agent observes the state of the environment $X_n$ and chooses an action $A_n$. Due to the agent's action, the environment evolves to a state $X_{n+1}$ and assigns a reward $r_{n+1}$. The goal of the agent is to find the optimal strategy $\pi$ which assigns to each state of the environment the optimal action in order to maximize the aggregate discounted rewards. A complete overview on the evolution of this field is given in \cite{sutton2018reinforcement}.
The Q-learning method was introduced by \cite{watkins1989learning} to solve a discrete time MDP with finite state and action spaces. It is based on the evaluation of the optimal action-value table, $Q(x,a)$, which represents the expected aggregate discounted rewards when starting in state $x$ and choosing the first action $a$, i.e.
\begin{align}\label{intro: Q}
    Q^*(x,a)=\max_\pi\EE\left[\sum_{n=0}^\infty \gamma^{n} r_{n+1} \,\Big\vert\,  X_0=x,A_0=a\right],
\end{align}
where $r_{n+1}=r(X_n,\pi(X_n))$ is the instantaneous reward, $\gamma \in (0,1)$ is a discounting factor, and $X_{n+1} = b(X_n, \pi(X_n))$. The maximum is taken over strategies (or policies) $\pi$, which are functions of the state taking values in some action space. 
Since the state's dynamics $b$ (and sometimes the reward function $r$) are unknown to the agent, the algorithm is characterized by the trade-off between exploration of the environment and exploitation of the current available information. This is typically accomplished by the implementation of an $\epsilon$-greedy policy. The greedy action which maximizes the immediate reward is chosen with probability $1-\epsilon$ and a random action otherwise, i.e.
\begin{align}\label{intro: pi}
    \pi^\epsilon(x)=\left\{
    \begin{array}{lll}
       a\in Unif(A),  &\mbox{with probability}&\epsilon,  \\
        a^*=\argmax_{a\in A}Q(x,a), &\mbox{with probability} &1-\epsilon.
    \end{array}\right.
\end{align}
Note that this is the randomized policy which will be used in the algorithm presented in Section \ref{sec:algo}, but as the optimal strategies will turn out to be deterministic (as $\epsilon $ goes to zero over learning episodes), in the following, we present the problems and the $Q$-learning approach only using deterministic policies called controls and denoted by $\alpha$ instead of $\pi$ (see \cite{motte2019mean} for additional details on randomized policies).

On the other hand, and to summarize, mean field games are the result of the application of mean field techniques from physics into game theory. The mean field interaction is introduced to describe the behavior of a large number $N$ of indistinguishable players with symmetric interactions. The complexity of the system would be intractable if we were to describe all the pairwise interactions. A solution to this problem is given by describing the interactions of each player $i$ with the empirical distribution of the other players. As the number of players increases, the impact of each of them on the empirical distribution decreases. By the principle of propagation of chaos (law of large numbers) each player becomes asymptotically independent from the others and its interaction is with its own distribution making the statistical structure of the system simpler. Two types of mean field problems can be distinguished between a mean field game and a mean field control depending on the goal the agents try to achieve. The aim of a mean field game is to find an equivalent of a Nash equilibrium in a non-cooperative $N$-player game when the number of players becomes large. On the other hand, a mean field control problem analyzes the social optimum in a cooperative game within a  large population. Since the seminal works~\cite{MR2295621}, and \cite{MR2346927,MR2352434}, the research in mean field game theory attracted a huge interest. We refer to the extensive works~\cite{carmona2018probabilisticI-II}, and \cite{MR3134900} for further details. Connections between machine learning and mean field theory have been proposed in the recent literature. Some model-based methods have first been introduced in~\cite{fouque2019deep,carmona2019convergence-I,carmona2019convergence-II} by combining neural network approximation tools and stochastic gradient descent. Furthermore, model-free methods and links with reinforcement learning have also attracted a surge of interest. \cite{yang2018mean} analyzes the benefits that a mean field (local) interaction
brings in a multi-agent reinforcement learning (MARL) algorithm when the number of player is finite. \cite{yang2018deep} uses inverse reinforcement learning to learn the dynamics of a mean field game on a graph. \cite{guo2019learning} defines a simulator based Q-learning algorithm to solve a mean field game with finite state and action spaces. \cite{SubramanianMahajan-2018-RLstatioMFG} designs a gradient based algorithm to solve cooperative games (MFC) and a two-timescale approach to solve non-cooperative games (MFG) with finite state and action spaces, analogously to~\cite{mguni2018decentralised}. Convergence of actor-critic method for linear-quadratic MFG~\cite{fu2019actor} and convergence regularized Q-learning for MFG with finite state and action spaces~\cite{anahtarci2020q} have also been proved.  To learn MFC optima, model-free policy gradient methods have been proved to converge for LQ problems in~\cite{CarmonaLauriereTan-2019-LQMFRL}, whereas Q-learning for a ``lifted'' MDP on the space of distributions has been introduced in~\cite{CarmonaLauriereTan-2019-MQFRL}. To learn MFG equilibria, the fictitious play scheme has been introduced in~\cite{MR3608094}, assuming the best response can be computed exactly. \cite{elie2020convergence} analyses the propagation of error when the best response is computed approximately in a model-free setting, while~\cite{perrin2020continuousfp} extends the analysis of the fictitious play scheme in continuous time of learning. Similarly to our approach, \cite{xie2020provable} studies a single-loop fictitious play algorithm in which the state and the policy are updated at each iteration. Fictitious play combined with deep neural networks has also been used to compute Nash equilibria in multi-agent games \cite{Han-Hu-2020}.

In this paper, we propose a mean field Q-learning algorithm which is able to solve the mean field game or mean field control problem depending on the tuning of the parameters and the rate of update of the distribution. Differently from the approach developed by \cite{guo2019learning}, the algorithm does not require a simulator of the population simplifying its application to real world problems. It exploits the mean field limit transposing the interaction of the player with the population to the interaction of the player with herself.

In Section \ref{sec: problem} we formulate in discrete time and space the type of infinite horizon Asymptotic  MFG and MFC problems that our algorithm will address. Comparison with classical (non-asymptotic) and stationary problems are also made. In Section \ref{sec: 2scale},  we recast them as a two-timescale problem of Borkar's type \cite{borkar1997stochastic,MR2442439} which provides convergence results. The algorithm itself is presented in Section \ref{sec:algo}. In Section \ref{sec:results}, we show numerical results with comparison to the benchmark case of discrete time and space approximations for continuous time and space  linear-quadratic problems for which we have explicit formulas derived in Appendix \ref{appendix : calculations}.

\section{Mean Field Game and Mean Field Control Problems} \label{sec: problem}

We start by presenting three formulations of MFG and MFC problems: non-asymptotic, asymptotic, and stationary. All these problems are on an infinite horizon and for the sake {of consistency with the RL literature, we present them in a discrete time and space framework. We will however resort to continuous time and space models In Section \ref{sec:results} in order to obtain simple benchmarks.}  
Note  that, as customary in the MFG literature, without loss of generality, we minimize a cost instead of maximizing a reward.

{%

Let $\mathcal{X}$ and $\mathcal{A}$ be finite sets corresponding to states and actions. We denote by $\Delta^{|\mathcal{X}|}$ the simplex in dimension $|\mathcal{X}|$, which we identify with the space of probability measures on $\mathcal{X}$. Let $p: \mathcal{X} \times \mathcal{A} \times \Delta^{|\mathcal{X}|} \to \Delta^{|\mathcal{X}|}$ be a transition kernel. We will sometimes view it as a function:
$$
    p: \mathcal{X} \times \mathcal{X} \times \mathcal{A} \times \Delta^{|\mathcal{X}|}\to [0,1], \qquad
    (x, x', a, \mu) \mapsto p(x'|x,a,\mu), 
$$
which will be interpreted as the probability, at any given time step, to jump to state $x'$ when starting from state $x$ and using action $a$ and when the population distribution is $\mu$.

Let $f: \mathcal{X} \times \mathcal{A} \times \Delta^{|\mathcal{X}|} \to \RR$ be a running cost function. We interpret $f(x,a,\mu)$ as the one-step cost, at any given time step, incurred to a representative agent who is at state $x$ and uses action $a$ while the population distribution is $\mu$. 
For a random variable $X$, we denote its law by $\cL(X)$. We will focus on feedback controls, i.e., functions of the state of the agent and possibly of time. 
}

\subsection{Non-asymptotic formulations}

In the usual formulation for time-dependent MFG and MFC, the interactions between the players are through the  distribution of states at the current time. More precisely, in a MFG, one typically looks for $(\hat \alpha, \hat \bmu)$ where $\hat\alpha: \mathbb{N} \times \mathcal{X} \to \mathcal{A}$ and $\hat \bmu = (\hat\mu_n)_{n \ge 0} \in (\Delta^{|\mathcal{X}|})^{\mathbb{N}}$ is a flow of probability distributions on $\mathcal{X}$, such that the following two conditions hold: 
\begin{enumerate}
    \item Optimality of the best response map: $\hat\alpha$ is the minimizer of 
    {
$$
    \alpha \mapsto J^{MFG}(\alpha; \hat \bmu)
    =
    \EE\left[ \sum_{n=0}^{+\infty} \gamma^n f(X^{\alpha,\hat \bmu}_n, \alpha_n(X^{\alpha,\hat \bmu}_n), \hat \mu_n) \right],
$$
}
where $\alpha_n(\cdot) \coloneqq \alpha(n,\cdot)$ and the process $X^{\alpha,\hat \bmu}$ follows the dynamics given by:  %
{
$$
   X^{\alpha,\hat \bmu}_{n+1} \sim p \left( \cdot|  X^{\alpha,\hat \bmu}_n, \alpha_n(X^{\alpha,\hat \bmu}_n), \hat \mu_n \right) 
$$ 
}
with initial distribution $X^{\alpha,\hat \bmu}_0 \sim \mu_0$;
    \item Fixed point condition: {$\hat \mu_n =  \cL(X^{\hat\alpha,\hat \bmu}_n)$ for every $n \ge 0$.} 
\end{enumerate}

In a MFC problem, the goal is  to find $\alpha^*$ such that the following condition holds:  $\alpha^*$ is the minimizer of
{
$$
    \alpha \mapsto J^{MFC}(\alpha)
    =
    \EE\left[ \sum_{n=0}^{+\infty} \gamma^n f(X^{\alpha}_n, \alpha_n(X^{\alpha}_n), \cL(X^{\alpha}_n)) \right],
$$
}
where the process $X^{\alpha}$ %
{ follows the dynamics:
$$
   X^{\alpha}_{n+1} \sim p\left( \cdot | X^{\alpha}_n, \alpha_n(X^{\alpha}_n), \cL(X^{\alpha}_n) \right)
$$
}
with initial distribution $X^{\alpha}_0 \sim \mu_0$.  {Note that $p$ is the same transition probability function as for the MFG above but we plug the law $\cL(X^{\alpha}_n)$ of $X^{\alpha}_n$ instead of a given distribution $\hat \mu_n$. In other words, the MFC problem is of McKean-Vlasov (MKV) type. }

We will sometimes use the notation $\bmu^* = \bmu^{\alpha^*}$ for the optimal distribution in the MFC. Note that the objective function in the MFC setting can be written in terms of the objective function in the MFG as:
$$
    J^{MFC}(\alpha) = J^{MFG}(\alpha; \bmu^\alpha),
$$
where {$\mu^\alpha_n = \cL(X^\alpha_n)$ for all $n \ge 0$}.  However, in general,
$$
    J^{MFC}(\alpha^*)  = J^{MFG}(\alpha^*; \bmu^*) \neq J^{MFG}(\hat\alpha; \hat\bmu).
$$

In these two problems, the equilibrium control $\hat\alpha$ or the optimal control  $\alpha^*$ usually depend on time due to the dependence of $p$ and $f$ on the mean field flow, which evolves with time.

Although these are the usual formulations of MFG and MFC problems, in order to draw connections with reinforcement learning more directly, we turn our attention to formulations in which the control is independent of time. That is naturally the case in some applications, and, roughly speaking, it is also in the spirit of an individual player trying to optimally join a crowd of players already in the long-time asymptotic equilibrium. This will be made more precise in the following section.

\subsection{Asymptotic formulations}\label{sec:asymptotic}

We consider the following MFG problem: Find $(\hat \alpha, \hat \mu)$ where $\hat\alpha: \mathcal{X} \to \mathcal{A}$ and $\hat\mu \in \Delta^{|\mathcal{X}|}$, such that the following two conditions hold: 
\begin{enumerate}
    \item $\hat\alpha$ is the minimizer of 
{
$$
    \alpha \mapsto J^{AMFG}(\alpha; \hat \mu)
    =
    \EE\left[ \sum_{n=0}^{+\infty} \gamma^n f(X^{\alpha,\hat \mu}_n, \alpha(X^{\alpha,\hat \mu}_n), \hat \mu) \right],
$$
}
where the process $X^{\alpha,\hat \mu}$ follows the %
{ transitions:
$$
    X^{\alpha,\hat \mu}_{n+1} \sim p\left( \cdot | X^{\alpha,\hat \mu}_n, \alpha(X^{\alpha,\hat \mu}_n), \hat \mu \right) 
$$
}
with initial distribution $X^{\alpha,\hat \mu}_0 \sim \mu_0$;
    \item {$\hat \mu = \lim_{n \to +\infty} \cL(X^{\hat\alpha,\hat \mu}_n)$.}
\end{enumerate}

We stress that in this problem the control is a function of the state only and does not depend on time, as $b$ and $f$ depend only on the limiting distribution but not on time. Intuitively, this problem corresponds to the situation in which an infinitesimal player wants to join a crowd of players who are already in the asymptotic regime  (as time goes to infinity). 
This stationary distribution is a Nash equilibrium if the new player joining the crowd has no interest in deviating from this asymptotic behavior.

We also consider the following MFC problem: Find $\alpha^*$ such that the following condition holds:  $\alpha^*$ is the minimizer of 
{
$$
    \alpha \mapsto J^{AMFC}(\alpha)
    =
    \EE\left[ \sum_{n=0}^{+\infty} \gamma^n f(X^{\alpha}_n, \alpha(X^{\alpha}_n), \mu^\alpha) \right],
$$
}
where the process $X^{\alpha}$ %
{follows the transitions
$$
    X^{\alpha}_{n+1} \sim p\left( \cdot | X^{\alpha}_n, \alpha(X^{\alpha}_n),  \mu^\alpha \right) 
$$
}
with initial distribution $X^{\alpha}_0 \sim \mu_0$, and with the notation {$\mu^\alpha = \lim_{n \to +\infty} \cL(X^{\alpha}_n)$}. 

We will sometimes use the shorthand notation $\mu^* = \mu^{\alpha^*}$ for the optimal distribution in the MFC setting. Here too, the control is independent of time, and $p$ and $f$ depend only on the limiting distribution. Intuitively, this problem can be viewed as the one posed to a central planner who wants to find the optimal stationary distribution such that the cost for the society is minimal when a new agent joins the crowd.

Note that in this formulation again, the objective function in the MFC setting can be written in terms of the objective function in the MFG as:
$$
    J^{AMFC}(\alpha) = J^{AMFG}(\alpha; \mu^\alpha),
$$
with the notation  {$\mu^\alpha = \lim_{n \to +\infty} \cL(X^{\alpha}_n)$.}

\begin{remark}
    Although the AMFG and AMFC problems in this section are defined using an initial distribution $\mu_0$ for the state process, one expects that under suitable conditions, {\it ergodicity} in particular, the optimal controls $\hat\alpha$ and $\alpha^*$ are independent of this initial distribution.
\end{remark}

\subsection{Stationary formulations}

Another formulation with controls independent of time  consists in looking at the situation in which the new agent joining the crowd starts with a position drawn according to the ergodic distribution of the equilibrium control or the optimal control. This type of problems has been considered e.g. in 
\cite{guo2019learning}, \cite{SubramanianMahajan-2018-RLstatioMFG}, and can be described as follows.

The stationary MFG problem is to find $(\hat \alpha, \hat \mu)$ where $\hat\alpha: \mathcal{X} \to \mathcal{A}$ and $\hat\mu \in \Delta^{|\mathcal{X}|}$, such that the following two conditions hold: 
\begin{enumerate}
    \item  $\hat\alpha$ is the minimizer of 
{
$$
    \alpha \mapsto J^{SMFG}(\alpha; \hat \mu)
    =
     \EE\left[ \sum_{n=0}^{+\infty} \gamma^n f(X^{\alpha,\hat \mu}_n, \alpha(X^{\alpha,\hat \mu}_n), \hat \mu) \right],
$$
}
where the process $X^{\alpha,\hat \mu}$ follows the SDE
{
$$
    X^{\alpha,\hat \mu}_{n+1} 
    \sim p\left( \cdot| X^{\alpha,\hat \mu}_n, \alpha(X^{\alpha,\hat \mu}_n), \hat \mu \right),
$$ 
}
and starts with distribution $X^{\alpha,\hat \mu}_0 \sim \hat\mu$;
    \item The process $X^{\hat\alpha,\hat \mu}$ admits $\hat \mu$ as invariant distribution {(so $\hat \mu = \cL(X^{\hat\alpha,\hat \mu}_n)$ for all $n \ge 0$)}.
\end{enumerate}

The key difference with the Asymptotic MFG formulation is that here the process starts with the invariant distribution $\hat\mu$. The control is a function of the state only and does not depend of time, and $p$ and $f$ depend only on this stationary distribution.

The stationary MFC problem is defined as follows: Find $\alpha^*$ such that the following condition holds:  $\alpha^*$ is the minimizer of 
{
$$
    \alpha \mapsto J^{SMFC}(\alpha)
    =
    \EE\left[ \sum_{n=0}^{+\infty} \gamma^n f(X^{\alpha}_n, \alpha(X^{\alpha}_n), \mu^\alpha) \right],
$$
}
where the process $X^{\alpha}$ %
{follows the MKV dynamics
$$
    X^{\alpha}_{n+1} \sim p\left( \cdot | X^{\alpha}_n, \alpha(X^{\alpha}_n), \mu^\alpha \right),
$$
}
with initial distribution $X^{\alpha}_0 \sim \mu^\alpha$, and such that $\mu^\alpha$ is the invariant distribution of $X^{\alpha}$ (assuming it exists). 

To conclude, let us mention that there is yet another formulation, in which the solution is stationary but depends on the initial distribution, see~\cite[Chapter 7]{MR3134900}.

\subsection{Connecting the three formulations}

Denoting by $\hat\alpha^{MFG}, \hat\alpha^{AMFG}$, and $\hat\alpha^{SMFG}$, the MFG equilibrium strategies respectively in the non-asymptotic, asymptotic, and stationary formulations, we expect
{
\begin{equation}\label{controls-MFG}
\left\{
\begin{split}
    \hat\alpha^{MFG}_n(x) &\to \hat\alpha^{AMFG}(x), \qquad \forall x, \qquad \hbox {as} \qquad n \to +\infty,
    \\
    \hat\alpha^{AMFG}(x) &= \hat\alpha^{SMFG}(x), \qquad \forall x.
\end{split}
\right.
\end{equation}
}

Similarly denoting by $\alpha^{*MFC}, \alpha^{*AMFC}$, and $\alpha^{*SMFC}$, the MFC optimal controls respectively in the non-asymptotic, asymptotic, and stationary formulations, we expect
{
\begin{equation}\label{controls-MFC}
\left\{
\begin{split}
    \alpha^{*MFC}_n(x) &\to \alpha^{*AMFC} (x), \qquad \forall x, \qquad \hbox {as} \qquad n \to +\infty,
    \\
    \alpha^{*AMFC} (x) &= \alpha^{*SMFC}(x), \qquad \forall x.
\end{split}
\right.
\end{equation}
}
In fact, we have  the following result.

\begin{theorem}\label{th:controls-A-S}
Consider the set of admissible controls to be defined as the set of controls $\alpha$ such that the process $(X_n^{\alpha})_{n \geq 0}$ is an irreducible and aperiodic Markov process on the finite space {\cal X}. If a solution for the asymptotic MFG (resp. MFC) exists, then it is equal to the solution of the corresponding stationary MFG (resp. MFC) and vice versa.
\end{theorem}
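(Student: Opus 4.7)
The plan relies on a single ergodic fact together with separate arguments for the MFG and MFC sides. First, I would use the admissibility assumption to invoke the standard convergence theorem for irreducible aperiodic Markov chains on a finite state space: for every admissible $\alpha$ and any fixed candidate measure $\mu$, the chain with kernel $p(\cdot|\cdot,\alpha(\cdot),\mu)$ admits a unique invariant distribution, and the marginals $\mathcal{L}(X_n^\alpha)$ converge to it regardless of the initial distribution $\mu_0$. This immediately identifies the ``$\lim_{n\to\infty}\mathcal{L}(X_n^{\hat\alpha,\hat\mu})$'' appearing in the AMFG/AMFC fixed-point conditions with the invariant measure appearing in the SMFG/SMFC fixed-point conditions, so the measure sides of the two formulations coincide.

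For the MFG direction, I would argue as follows. Once a common candidate measure $\hat\mu$ is frozen, both $J^{AMFG}(\alpha;\hat\mu)$ and $J^{SMFG}(\alpha;\hat\mu)$ are the discounted infinite-horizon cost of \emph{exactly the same} MDP (transition kernel $p(\cdot|\cdot,\cdot,\hat\mu)$, running cost $f(\cdot,\cdot,\hat\mu)$), differing only in the law of $X_0$. Standard dynamic programming on a finite MDP then shows that the optimal stationary feedback $\hat\alpha$ is characterized by Bellman's equation and is \emph{pointwise} optimal, hence is the same best response for every initial distribution. Combined with the measure identification above, this yields the equivalence for the MFG in both directions.

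For the MFC direction, I would use the ergodic identification to write, for each admissible $\alpha$,
\[
  J^{AMFC}(\alpha) \;=\; \langle \mu_0, V^\alpha\rangle, \qquad J^{SMFC}(\alpha) \;=\; \langle \mu^\alpha, V^\alpha\rangle,
\]
where $V^\alpha$ solves $V^\alpha(x) = f(x,\alpha(x),\mu^\alpha) + \gamma \sum_{x'} p(x'|x,\alpha(x),\mu^\alpha) V^\alpha(x')$ and $\mu^\alpha$ is the common invariant/limit measure. Using invariance of $\mu^\alpha$ under the $\alpha$-closed-loop chain, the second expression collapses to the ergodic average $\frac{1}{1-\gamma}\sum_x \mu^\alpha(x) f(x,\alpha(x),\mu^\alpha)$. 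I would then close the argument by checking that a global minimizer of one of the two cost functionals, once paired with its associated invariant measure, produces the same value function and satisfies the same coupled optimality conditions as a global minimizer of the other, and conversely; by construction, both solutions live in the same admissible class, and the measure identification makes $\mu^\alpha$ depend on $\alpha$ alone, not on the starting law.

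The main obstacle is precisely this last step in the MFC case. The MFG half is essentially immediate once one observes that the best-response MDP is independent of the initial distribution. The MFC half is genuinely of McKean--Vlasov type: a perturbation of $\alpha$ perturbs $\mu^\alpha$, and in principle the two weightings $\mu_0$ and $\mu^\alpha$ of the common value function $V^\alpha$ could select different minimizers. Overcoming this requires exploiting both the time-translation invariance induced by starting at the invariant law in the SMFC formulation, and the fact that under admissibility the map $\alpha \mapsto \mu^\alpha$ is intrinsic to the closed-loop chain and oblivious to $\mu_0$.
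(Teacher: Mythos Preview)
Your MFG argument is correct and in fact supplies the justification the paper's proof leaves implicit: the paper simply asserts that $\hat\alpha^{AMFG}$ does not depend on $\mu_0$, and you ground this in the Bellman equation of the frozen-$\hat\mu$ MDP, which characterizes the best response pointwise in $x$. Combined with the ergodic identification of the limiting and invariant measures, this handles both directions of the MFG equivalence, exactly as in the paper.

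For the MFC direction, the obstacle you flag is real, and you should know that the paper's own proof does not address it: the paper writes only ``a similar argument for MFC problems applies.'' Your representation $J^{AMFC}(\alpha)=\langle\mu_0,V^\alpha\rangle$ versus $J^{SMFC}(\alpha)=\langle\mu^\alpha,V^\alpha\rangle$ makes the issue precise; a priori two different weightings of the family $\{V^\alpha\}_\alpha$ could select different minimizers. The missing ingredient is a \emph{pointwise} optimality statement: one needs a single admissible $\alpha^*$ with $V^{\alpha^*}(x)\le V^{\alpha}(x)$ for every $x\in\mathcal X$ and every admissible $\alpha$, so that $\alpha^*$ minimizes $\langle\nu,V^\alpha\rangle$ for \emph{any} probability weight $\nu$, in particular both $\mu_0$ and $\mu^{\alpha}$. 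The paper establishes exactly this later, via a policy-improvement argument and a modified Bellman equation for the MFC $Q$-function (Section~\ref{subsec: Q-A-MFC} and Appendix~C), none of which references $\mu_0$. Invoking that characterization closes your gap; without it, your concern that the two functionals could have distinct minimizers stands, and the one-line ``similar argument'' in the paper is not a proof either.
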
   

\begin{proof}
 Let us consider the pair $(\hat\alpha^{AMFG}, \hat\mu^{AMFG})$ solution of  an asymptotic MFG. The optimal control $\hat\alpha^{AMFG}$ is an optimizer over the set of admissible controls such that the process $(X_n^{\alpha})_{n \geq 0}$ is an irreducible Markov process and admits a limiting distribution which is then the unique invariant distribution using the control $\hat\alpha^{AMFG}$. 
 Note that the control $\hat\alpha^{AMFG}$ doesn't depend on the initial distribution $\mu_0$ and consequently $\hat\mu^{AMFG}$ doesn't either. Therefore, $(\hat\alpha^{AMFG}, \hat\mu^{AMFG})$ is the solution of the AMFG starting from $\hat\mu^{AMFG}$, which is the corresponding stationary MFG problem. Thus, we deduce the desired relation $\hat\alpha^{AMFG}=\hat\alpha^{SMFG}$.
A similar argument for MFC problems applies and shows that 
$\alpha^{*AMFC}=\alpha^{*SMFC}$.
\end{proof}

\begin{remark}\label{remark:formulation_choice}
In terms of practical applications, the asymptotic formulation (AMFG and AMFC) seems to be the most appropriate, and if one is interested in the optimal controls, Theorem \ref{th:controls-A-S} shows that solving the asymptotic games also gives the solutions to the corresponding stationary games. Additionally, \eqref{controls-MFG} and \eqref{controls-MFC} indicate that it also gives the long time solutions to the corresponding time-dependent games. Developing Q-learning algorithms for solving time-dependent finite horizon games is addressed in our forthcoming paper \cite{AngiuliFouqueLauriere-Handbook2021}.
\end{remark}

In Appendix \ref{appendix : calculations}, we provide explicit solutions for MFG, AMFG, SMFG, MFC, AMFC, and SMFC, in the case of continuous time, continuous space Linear-Quadratic stochastic differential games. We verify that  \eqref{controls-MFG} and \eqref{controls-MFC}, and therefore, Theorem \ref{th:controls-A-S}, are satisfied in that case as well. In Section \ref{sec:results}, discrete approximations of these games will also serve as benchmarks for our algorithm described in Section \ref{sec:algo}.

\section{A unified view of learning for MFG and MFC} \label{sec: 2scale}

In this section we draw a connection between MFG, MFC, Q-learning and Borkar's two timescale approach~\cite{borkar1997stochastic,MR2442439}. \newline The definitions of MFG and MFC reveal that the two formulations are very similar and both involve an optimization and a distribution.  This leads to the idea of designing an iterative procedure which would update the value function and the distribution. However, in the MFG, the distribution is frozen during the optimization and then a fixed point condition is enforced, whereas in the MFC problem the distribution is directly linked to the control, which implies that it should change instantaneously when the control function is modified. Hence, to compute the solutions using an iterative algorithm, the updates should be done differently for each problem: intuitively, in a MFG, the value function should be updated in an inner loop and the distribution in an outer loop, whereas it should be the converse for MFC. More generally, we can update both functions in turn but at different rates. Then, to compute the MFG solution, the distribution should be updated at a lower rate than the value function. For MFC, it should be the converse. In the rest of this subsection, we formalize these ideas.

\subsection{Action-value function in the classical Q-learning setup}\label{sec:Q_classical_MDP}

One of the most popular methods in RL is the so-called Q-learning~\cite{watkins1989learning}. Instead of looking at the value function $V$ as in a PDE approach for optimal control, this method is based on the action-value function, also called $Q$-function, which takes as inputs not only a state $x$ but also an action $a$. Intuitively, in a standard (non mean-field) MDP, this function quantifies the optimal cost-to-go of an agent starting at $x$, using action $a$ for the first step and then acting optimally afterwards. In other words, the value of $(x,a)$ is the the cost of using $a$ when in state $x$, plus the minimal cost possible after that, i.e. the cost induced by using the optimal control; 
see e.g.~\cite[Chapter 3]{sutton2018reinforcement} for more details. 
The definition of the optimal $Q$-function, denoted by $Q^*$, is similar to~\eqref{intro: Q}, up to a change of sign since we consider a cost $f$ and a minimization problem instead of a reward $r$ and a maximisation problem, namely,
 \begin{align*}%
    Q^*(x,a)=\min_\alpha\EE\left[\sum_{n=0}^\infty \gamma^n f(X_{n},\alpha(X_{n})) \,\Big\vert\, X_{0}=x,A_{0}=a\right].
\end{align*}
Using dynamic programming, it can be shown that $Q^*$ is the solution of the Bellman equation:  
$$
    Q^*(x,a) = f(x, a) +\gamma \sum_{x' \in \mathcal{X}} p(x' | x, a) \min_{a'} Q^*(x',a'), \qquad (x,a) \in \mathcal{X} \times \mathcal{A}. 
$$
The corresponding value function $V^*$ is given by:
$$
    V^*(x) = \min_a Q^*(x,a), \qquad x \in \mathcal{X}.
$$
One of the main advantages of computing the optimal action-value function instead of the value function is that from the former, one can directly recover the optimal control, given by $\argmin_{a \in \mathcal{A}} Q^*(x,a)$. This is particularly important in order to design model-free methods, as we will see in the next section.

\subsection{Action-value function for Asymptotic MFG}\label{subsec: Q-A-MFG}

In the context of Asymptotic MFG introduced in Section \ref{sec:asymptotic}, we can view the problem faced by an infinitesimal agent among the crowd as an MDP \emph{parameterized} by the population distribution. Hence, given a population distribution $\mu$, standard RL techniques can be applied to compute the $Q$-function of an infinitesimal agent against this given $\mu$.

Then, the optimal $Q$-function is defined, for a given $\mu$, by
 \begin{align}%
 \label{eq:optimal-Q-fct}
    Q^*_\mu(x,a)=\min_\alpha\EE\left[\sum_{n=0}^\infty \gamma^n f(X_{n},\alpha(X_{n}), \mu) \,\Big\vert\, X_{0}=x,A_{0}=a\right],
\end{align}
where the cost function $f(x,a,\mu)$ depends on the fixed $\mu$ as well as the transition probabilities $p(x' | x, a, \mu)$. Since $\mu$ is fixed, as in the classical case, one obtains the
the Bellman equation: 
\begin{equation}
\label{eq:def-Q-functionMFG}
    Q^*_{\mu}(x,a) = f(x, a, \mu) + \gamma\sum_{x' \in \mathcal{X}} p(x' | x, a, \mu) \min_{a'} Q^*_\mu(x',a'), \qquad (x,a) \in \mathcal{X} \times \mathcal{A}. 
\end{equation}
This function characterizes the optimal cost-to-go for an agent starting at state $x$, using action $a$ for the first step, and then acting optimally for the rest of the time steps, while the population distribution is given by $\mu$ (for every time step). Note that $\min_{a}Q^*_{\mu}(x,a)=
\min_\alpha J^{AMFG}(\alpha; \mu)$ in the notation of Section \ref{sec:asymptotic}.

\subsection{Action-value function for Asymptotic MFC}\label{subsec: Q-A-MFC}

For MFC, it is not obvious how to use the same $Q$-function because, as noticed earlier, the distribution appearing in the definition of MFC is directly linked to the control and not fixed a priori. One possibility is to look at MFC as an MDP on the space of distributions and then to introduce a $Q$-function which takes a distribution as an input~\cite{CarmonaLauriereTan-2019-MQFRL,gu2019dynamic,gu2020qlearning,motte2019mean}. 

We take a different route and consider a modified Q- function as follows. For an admissible control $\alpha(x)$, we define the MKV- dynamics $p(x' | x, a, \mu^\alpha)$ so that $\mu^\alpha$ is the limiting distribution of the associated process $(X^\alpha _n)$. We define the control $\tilde\alpha$ by
\begin{align}\label{def:tildealpha}
    \tilde\alpha (x')=\left\{
    \begin{array}{lll}
      a &\mbox{if} &x'=x, \\
        \alpha(x) &\mbox{for} &x'\neq x.
    \end{array}\right.
\end{align}
Note that $\tilde\alpha$ depends on $x$ and $a$.
Our modified $Q$-function is given by
\begin{align*}%
    Q^\alpha(x,a)=f(x,a, \mu^{\tilde\alpha})+\EE\left[\sum_{n=1}^\infty \gamma^n f(X_{n},\alpha(X_{n}), \mu^\alpha) \,\Big\vert\, X_{0}=x,A_{0}=a\right].
\end{align*}
We then obtain that the optimal $Q^*(x,a)=\min_\alpha Q^\alpha(x,a)$ satisfies the Bellman equation
\begin{equation}
\label{eq:def-Q-functionMFC}
    Q^*(x,a) = f(x, a, \tilde{\mu}^*) + \gamma\sum_{x' \in \mathcal{X}} p(x' | x, a, \tilde{\mu}^*) \min_{a'} Q^*(x',a'), \qquad (x,a) \in \mathcal{X} \times \mathcal{A}, \end{equation}
where the optimal control $\alpha^*$ is given by   $\alpha^*(x)=\argmin_a Q^*(x,a)$, the control $\tilde{\alpha}^*$ is defined by \eqref{def:tildealpha} for $x$ and $a$, and 
$\tilde{\mu}^*:=\mu^{\tilde{\alpha}^*}$.
The optimal value function is $V^*(x)=\min_a Q^*(x,a)$ ($=J^{AMFC}(\alpha^*)$ in the notation of Section \ref{sec:asymptotic}).
The details of the derivation of these equations are given in Appendix \ref{appendix : MFC-RL}.

Note that, compared with the $Q_\mu$-function used for MFG, our MFC modified $Q$-function
involves the differences $\Delta_\mu f:=f(x,a,\tilde\mu)-f(x,a,\mu)$ and $\Delta_\mu p:=p(\cdot | x,a,\tilde\mu)-p(\cdot | x,a,\mu)$ which play the role of derivatives with respect to the probability distribution in the classical continuous time and space Mean Field Control problems.

\subsection{Unification through a two timescale approach}\label{sec: unified 2scale approach}

The goal is now to design a learning procedure which can approximate, for either MFG or MFC, not only $Q$ but also the corresponding $\mu$. For MFG, the usual fixed point iterations are on the distribution and at each iteration, the best response against this distribution (which can be deduced from the corresponding $Q$ table) is computed. For MFC, the iterations are on the control (here again, it can be deduced from the $Q$ table) and the distribution corresponding to this control is computed at each iteration. Instead of completely freezing the distribution (resp. the control) in the first case (resp. the second case), we can imagine that letting it evolve at a slow rate would still lead to the same limit. In other words, the definitions of MFG and MFC seem to lie at the two opposite sides of a spectrum. 

Based on this viewpoint, we consider the following iterative procedure, where both variables ($Q$ and $\mu$) are updated at each iteration but with different rates.  Starting from an initial guess $(Q_0, \mu_0) \in \RR^{|\mathcal{X}| \times |\mathcal{A}|} \times \Delta^{|\mathcal{X}|}$, define iteratively for $k=0,1,\dots$:

\begin{subequations}%
     \begin{empheq}[left=\empheqlbrace]{align}
      \label{eq1:2scale-mu-k}
    \mu_{k+1} &= \mu_{k} + \rho_{k}^\mu \mathcal{P}(Q_{k}, \mu_{k}),
    \\
     \label{eq1:2scale-Q-k}
    Q_{k+1} &= Q_{k} + \rho_{k}^Q \mathcal{T}(Q_{k}, \mu_{k}),
    \end{empheq}
\end{subequations}

where 
$$
\begin{cases}
 \mathcal{P}(Q, \mu)(x) = (\mu P^{Q,\mu})(x) - \mu(x), \qquad x \in \mathcal{X},\\
    \mathcal{T}(Q, \mu)(x,a) = f(x, a, \mu) + \gamma \sum_{x'} p(x' | x,a,\mu) \min_{a'}Q(x',a') - Q(x,a), \qquad (x,a) \in \mathcal{X} \times \mathcal{A}, 
\end{cases}
$$
and 
$$
    P^{Q,\mu}(x, x') = p(x' | x, \argmin_{a} Q(x, a), \mu), 
    \qquad \hbox{} 
    \qquad
    (\mu P^{Q,\mu})(x) = \sum_{x_0} \mu(x_0) P^{Q,\mu}(x_0,x),
$$
$ P^{Q,\mu}$ is the transition matrix when the population distribution is $\mu$ and the agent uses the optimal control according to $Q$.   The learning rates $\rho_{k}^\mu$ and $\rho_{k}^Q$ are assumed to satisfy usual Robbins-Monro type conditions, namely: $\sum_{k} \rho_{k}^\mu = \sum_{k} \rho_{k}^Q = +\infty$ and $\sum_{k} |\rho_{k}^\mu|^2 = \sum_{k} |\rho_{k}^Q|^2 < + \infty$.

If $\rho_{k}^\mu < \rho_{k}^Q$, the approximate $Q$-function evolves faster, while it is the converse if $\rho_{k}^\mu > \rho_{k}^Q$. This suggests that these two regimes should converge to different limit points.  These ideas have been studied by Borkar~\cite{borkar1997stochastic,MR2442439} in connection with reinforcement learning methods under the name of two timescales approach. More precisely, from Borkar~\cite[Chapter 6, Theorem 2]{MR2442439}, we expect to have the following two situations. We assume that the operators $\mathcal{T}$ and $\mathcal{P}$ are Lipschitz continuous, which, as explained in Appendix~\ref{appendix : borkar_assumptions}, can be obtained from the Lipschitz continuity of $f$ and $p$ in the model, as well as a slight modification of $\mathcal{P}$ to regularize the minimizer.

\begin{itemize}
    \item {\bf Two timescale approach for MFG}. 
    
    If $\rho_{k}^\mu/\rho_{k}^Q \to 0$ as $k\to+\infty$, the system \eqref{eq1:2scale-mu-k}--\eqref{eq1:2scale-Q-k} tracks the ODE system
    \begin{subequations}%
     \begin{empheq}[left=\empheqlbrace]{align*}
      \dot \mu_t &= \mathcal{P}(Q_t, \mu_t),\\
    \dot Q_t &= \frac{1}{\epsilon} \mathcal{T}(Q_t, \mu_t),
    \end{empheq}
    \end{subequations}
    where $\rho_{k}^\mu /\rho_{k}^Q$  is thought of being of order $\epsilon\ll 1$.
    We consider, for any fixed $\mu$, the ODE 
    $$
    \dot Q_t = \frac{1}{\epsilon} \mathcal{T}(Q_t,  \mu),
    $$ 
    and we assume it has a globally asymptotically stable equilibrium $Q_{ \mu}$. 
    In particular, $\mathcal{T}(Q_{ \mu},  \mu) = 0$, meaning by \eqref{eq:def-Q-functionMFG} that $Q_\mu$ is the value function of an infinitesimal agent facing the crowd distribution $ \mu$. We further assume that $Q_\mu$ is Lipschitz continuous with respect to $\mu$. Convergence to $Q_\mu$ can be obtained following standard arguments for Q-learning (see, \textit{e.g.}, \cite[Section 10.3]{MR2442439}) and the Lipschitz continuity of $Q_\mu$ can be guaranteed through Lipschitz continuity of $f, p$ and the minimizer  in~\eqref{eq:optimal-Q-fct}.  Then the first ODE becomes 
    $$
        \dot \mu_t = {\mathcal{P}}(Q_{\mu_t}, \mu_t).
    $$
     Assuming it has a globally asymptotically stable equilibrium  $\mu_{\infty}$, this distribution satisfies
    $$
        {\mathcal{P}}(Q_{\mu_\infty}, \mu_\infty) = 0.
    $$
    This condition implies that $\mu_\infty$ and the associated control given by $\hat\alpha(x) = \argmin_{a} Q_{\mu_\infty}(x, a)$ form a Nash equilibrium.  From~\cite[Chapter 6, Theorem 2]{MR2442439}, the system \eqref{eq1:2scale-mu-k}--\eqref{eq1:2scale-Q-k} with discrete time updates also converges to this Nash equilibrium when $\rho_{k}^\mu/\rho_{k}^Q \to 0$ as $k\to+\infty$. 
    \vskip .4cm
    \item {\bf Two timescale approach for MFC}.
    
    If $\rho_{k}^Q/\rho_{k}^\mu \to 0$ as $k\to+\infty$, the system \eqref{eq1:2scale-mu-k}--\eqref{eq1:2scale-Q-k} tracks the ODE system
    \begin{subequations}%
     \begin{empheq}[left=\empheqlbrace]{align*}
      \dot \mu_t &= \frac{1}{\epsilon}  \mathcal{P}(Q_t, \mu_t),
    \\
    \dot Q_t &= {\mathcal{T}}(Q_t, \mu_t),
   \end{empheq}
    \end{subequations}
     where $\rho_{k}^Q /\rho_{k}^\mu$  is thought of being of order $\epsilon\ll 1$.
    We consider, for any fixed $Q$, the ODE
    $$
        \dot \mu_t = \frac{1}{\epsilon}  \mathcal{P}(Q, \mu_t),
    $$
    and we assume it has a globally asymptotically stable equilibrium $\mu_{Q}$. 
    In particular, $\mathcal{P}(Q,  \mu_Q) = 0$, meaning that $\mu_{Q}$ is the asymptotic distribution of a population in which every agent uses the control $ \alpha(x) = \argmin_{a} Q(x,a)$. We further assume that $\mu_{Q}$ is Lipschitz continuous with respect to $Q$. 
    Then the second ODE becomes 
    $$
       \dot Q_t(x,a) = {\mathcal{T}}(Q_t(x,a), \widetilde{\mu}_{Q_t}),
    $$
    where $\widetilde{\mu}_{Q_t}$ is defined by \eqref{def:tildealpha} at $(x,a)$ for $\alpha(\cdot)= \argmin_{a'}Q_t(\cdot,a')$. This is consistent with the update of $Q$ and what the algorithm proposed in Section \ref{sec:algo} does.
     Assuming this ODE has a globally asymptotically stable equilibrium  $Q_{\infty}$, this $Q$-table satisfies
    $$
        {\mathcal{T}}(Q_\infty, \widetilde{\mu}_{Q_\infty}) = 0.
    $$
    This last condition means that $Q_\infty=Q^*$ satisfies the MFC Bellman equation \eqref{eq:def-Q-functionMFC}, and that the control $\alpha^*(x) = \argmin_{a} Q_\infty(x, a)$ is an MFC optimum for the asymptotic formulation and the induced optimal distribution is $\mu_{Q_\infty}$.   From~\cite[Chapter 6, Theorem 2]{MR2442439}, the system \eqref{eq1:2scale-mu-k}--\eqref{eq1:2scale-Q-k} with discrete time updates also converges to this social optimum when $\rho_{k}^Q/\rho_{k}^\mu \to 0$ as $k\to+\infty$. 
\end{itemize}

\subsection{Stochastic approximation}

The above (deterministic) algorithm relies on the operators $\mathcal{P}$, $\mathcal{T}$  which, in many practical situations are not known, for instance because the agent does not know for sure the dynamics or the reward function. In such situations, the agent can only rely on random samples (more details are provided in the next section). The algorithm can be modified to account for such stochastic approximations. Indeed, let us assume that, for any $Q,\mu,x,a$, the agent can know the value $f(x, a, \mu)$ and can sample a realization of the random variable
$$
    X'_{x,a,\mu} \sim p(\cdot | x,a,\mu).
$$
Then, she can compute the realization of the following random variables $\widecheck{\mathcal{T}}_{Q,\mu,x,a}$ and $\widecheck{\mathcal{P}}_{Q,\mu,x,a}$ taking values respectively in $\RR$ and $\Delta^{|\mathcal{X}|}$:
$$
    \widecheck{\mathcal{T}}_{Q,\mu,x,a} = f(x, a, \mu) + \gamma  \min_{a'}Q(X'_{x,a,\mu},a') - Q(x,a),
$$
and
$$
    \widecheck{\mathcal{P}}_{Q,\mu,x,a}(x'') =  \mathbf{1}_{\{X'_{x,a,\mu}=x''\}} - \mu(x''), \qquad \forall x'' \in \mathcal{X}. 
$$
Observe that
\begin{equation}
\label{eq:link-checkT-calT}
    \EE[\widecheck{\mathcal{T}}_{Q,\mu,x,a}] 
    = \sum_{x'} p(x' | x,a,\mu) 
    \left[ f(x, a, \mu) + \gamma  \min_{a'}Q(x',a') - Q(x,a) \right]
    = \mathcal{T}(Q, \mu)(x,a),
\end{equation}

and
$$
    \EE[\widecheck{\mathcal{P}}_{Q,\mu,x,a}(x'') ]
    = \sum_{x'} p(x' | x,a,\mu) 
    \left(\mathbf{1}_{\{x'=x''\}}- \mu(x'')\right)
    = p(x'' | x,a,\mu)- \mu(x''). 
$$
If the starting point $x$ comes from a random variable $X \sim \mu$ and if $a$ is chosen to be an optimal action at $X$ according to a given table $Q$, i.e., $a \in \argmin_{\mathcal A} Q(X,\cdot)$, then we obtain
\begin{align}
    \notag
    \EE[\widecheck{\mathcal{P}}_{Q,\mu,X,\argmin_a Q(X,a)}(x'') ]
    &= \sum_x \mu(x) \sum_{x'} p(x' | x,\argmin_a Q(x,a),\mu) 
    \left(\mathbf{1}_{\{x'=x''\}}- \mu(x'')\right)
    \\
    \notag
    &= \sum_x \mu(x) \left(p(x'' | x,\argmin_a Q(x,a),\mu)- \mu(x'')\right)
    \\
    \notag
    &= (\mu P^{Q,\mu})(x'')- \mu(x'')\\
    &= \mathcal{P}(Q, \mu)(x'').
\label{eq:link-checkP-calP}
\end{align}

We can thus replace the deterministic updates~\eqref{eq1:2scale-mu-k}--\eqref{eq1:2scale-Q-k} by the following stochastic ones, starting from some initial $Q_0, \mu_0$: for $k=0, 1, \dots$, 
\begin{subequations}%
     \begin{empheq}[left=\empheqlbrace]{align}
      \label{eq:2scale-mu-k-sto}
    \mu_{k+1}(x) 
    &= \mu_{k}(x) + \rho_{k}^\mu \widecheck{\mathcal{P}}_{Q_{k}, \mu_{k},X_k,\argmin_a Q(X_k,a)}(x)
    \\
    \notag&= \mu_{k}(x) + \rho_{k}^\mu \mathcal{P}(Q_{k}, \mu_{k})(x) + \mathbf{P}^k(x), \qquad \forall x \in \mathcal{X}
    \\
     \label{eq:2scale-Q-k-sto}
    Q_{k+1}(x,a)
    &= Q_{k}(x,a) + \rho_{k}^Q \widecheck{\mathcal{T}}_{Q_{k}, \mu_{k},x,a}
    \\
    \notag&= Q_{k} + \rho_{k}^Q \mathcal{T}(Q_{k}, \mu_{k})(x,a) + \mathbf{T}^k(x,a), \qquad \forall (x,a) \in \mathcal{X} \times \mathcal{A},
    \\
    \notag
    X_k &\sim \mu_k,
     \end{empheq}
\end{subequations}
where we introduced the notation: 
$$
    \mathbf{P}^k(x) = \rho_{k}^\mu \Big( \widecheck{\mathcal{P}}_{Q_{k}, \mu_{k},X_k,\argmin_a Q_{k}(X_k, a)}(x) - \mathcal{P}(Q_{k}, \mu_{k})(x)\Big), \qquad \forall x,
$$
and
$$
    \mathbf{T}^k(x,a)  = \rho_{k}^Q \Big( \widecheck{\mathcal{T}}_{Q_{k}, \mu_{k},x,a} - \mathcal{T}(Q_{k}, \mu_{k})(x,a)\Big), \qquad \forall (x,a),
$$
with $X_k$ sampled from $\mu_k$.
Note that $\mathbf{T}^k$ and $\mathbf{P}^k$ are martingales by the above remarks, see~\eqref{eq:link-checkT-calT}--\eqref{eq:link-checkP-calP}. Hence under suitable conditions, we expect convergence to hold by classical stochastic approximation results~\cite{MR2442439}.

However, the procedure~\eqref{eq:2scale-mu-k-sto}--\eqref{eq:2scale-Q-k-sto} is synchronous (it updates all the coefficients of the Q-table and the distribution at each iteration $k$) and it requires having access to a generative model, i.e., to a simulator which can provide samples of transitions drawn according to $p(\cdot| x,a,\mu_k )$ for arbitrary state $x$. In the next section, we propose a procedure which works even with a more restricted setting, which uses episodes: In each episode, the learner is constrained to follow the trajectory sampled by the environment without choosing arbitrarily its state.

\section{Reinforcement Learning Algorithm}\label{sec:algo}

As recalled in the Introduction, RL studies the algorithms to solve a Markov decision process (MDP) based on trials and errors. An MDP can be described through the interactions of an agent with an environment. At each time $n$, the agent observes its current state $X_{n} \in \mathcal{X}$ and chooses an action $A_{n} \in \mathcal{A}$. Due to the agent's action, the environment provides the new state of the agent $X_{n+1}$ and incurs a cost $f_{n+1}$. The goal of the agent is to find an optimal strategy (or policy) $\pi^*$ which assigns to each state an action in order to minimize the aggregated discounted costs. The idea is then to design methods which allow the agent to learn (an approximation of) $\pi^*$ by making repeated use of the environment's outputs but without knowing how the environment produces the new state and the associated cost. A detailed overview of this field can be found in 
 \cite{sutton2018reinforcement} (although RL methods are often presented with reward maximization objectives, we consider cost minimization problems for the sake of consistency with the MFG literature).

  As presented in Section \ref{sec:Q_classical_MDP}, the optimal strategy can be derived from the optimal action-value function. However $Q^*$ is a priori unknown. In order to learn $Q^*$ by trials and errors, an approximate version $Q$ of the table $Q^*$ is constructed through an iterative procedure. At each step, an action is taken, which leads to a cost and to a new state. On the one hand, it is interesting to act efficiently in order to avoid high costs, and on the other hand it is important to improve the quality of the table $Q$ by trying actions and states which have not been visited many times so far. This is the so-called exploitation--exploration trade-off. The trade-off between exploration of the unknown environment and exploitation of the currently available information can be taken care of by an $\epsilon$-greedy policy based on $Q$. The algorithm chooses the action that minimizes the immediate cost with probability $1-\epsilon$, and a random action otherwise, as in (\ref{intro: pi}) with an $\argmin$.

\subsection{U2-MF-QL : Unified Two Timescales Mean Field Q-learning }

In order to apply the RL paradigm to  mean field problems, the first step consists in defining the connection between these two frameworks.  
In a MFG (resp. a MFC) the goal of a typical agent is to find the pair $(\hat\alpha,\hat\mu)$ (resp. $(\alpha^*,\mu^*)$) where $\hat\alpha:\mc{X}\mapsto \mc{A}$ (resp. $\alpha^*:\mc{X}\mapsto \mc{A}$) represents the  equilibrium (resp. optimal) strategy  which assigns at each state the equilibrium (resp. optimal) action in order to minimize the aggregated discounted costs and $\hat \mu$ (resp. $\mu^*$) is the ergodic distribution of the population at equilibrium (resp. optimum). 
The traditional definition of an MDP based on the agent--environment pair is augmented with the  distribution of the population. In this new framework, the agent corresponds to the representative player of the mean field problem.  

We now define the type of environment to which the agent is assumed to have access. A key difference with prior works on RL for mean field problems is that we do not assume that agent can witness the evolution of the population's distribution. Instead, the environment estimates the distribution of the population by exploiting the symmetry property of the problem. Indeed, when the system is at equilibrium the law of the representative player matches the distribution of the population. As showed in the diagram of Figure~\ref{plot: MF-MDP}, at each time $n$, the agent observes its current state $X_{n} \in \mathcal{X}$ and then chooses an action $A_{n} \in \mathcal{A}$. An approximation of the distribution $\mu_{n}$ is computed by the environment based on the observed states of the representative player. Provided with the choice of the action and the estimate of the distribution, the environment generates the new state of the agent $X_{n+1}$ and assigns a cost $f_{n+1}$.  

\begin{figure}[H]
\centering
 \includegraphics[width=.6\linewidth]{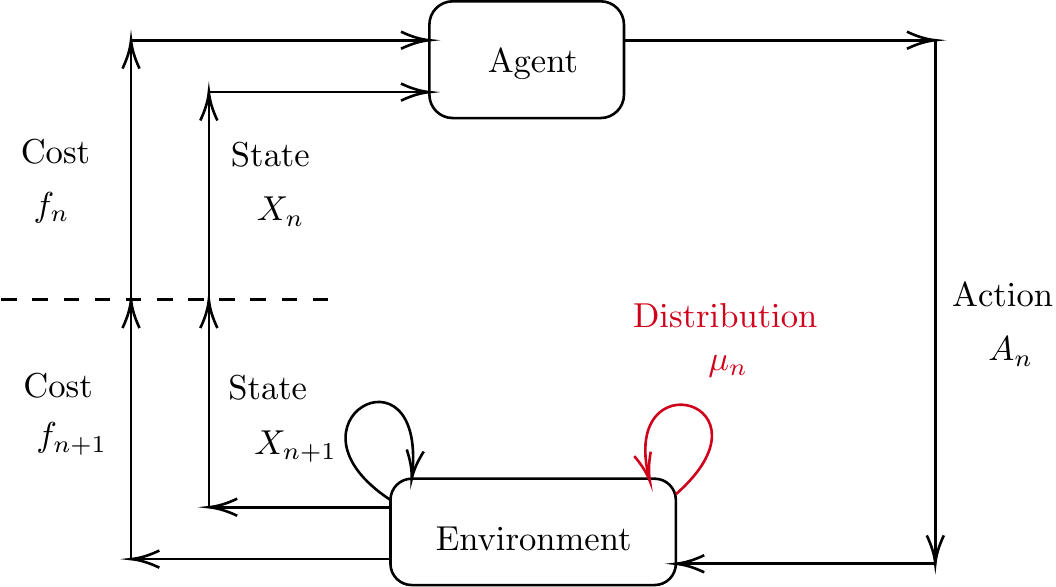}
  \caption{MDP with Mean Field interactions: Interaction of the representative agent with the environment. When the current state of the representative agent is $X_{n}$, given an action $A_{n}$, the environment produces an estimate of the distribution $\mu_{n}$, the new state $X_{n+1}$ and incurs a cost $f_{n+1}$ calculated by starting from the current state of the environment $X_{n}$ and using the transition controlled by $A_{n}$ and parameterized by $\mu_{n}$. }
  \label{plot: MF-MDP}
  
\end{figure}

The algorithm is designed to solve infinite horizon problems through an online approach, i.e. interacting with the environment. The learning procedure is based on splitting the infinite horizon in successive episodes in order to promote the exploration of the environment. The first episode is initialized based on the initial distribution of the representative player. Within a given episode, the agent updates her strategy at each learning step aiming to optimize the expected aggregated cost based on the current estimate of the distribution of the population $\mu_n$. Changes in the representative player's strategy have an effect on the population requiring to update $\mu_n$  accordingly. After an assigned number of steps ${T}$, the episode is terminated. A new episode is initialized based on the current version of the environment represented by the estimate of the population obtained at the last time point of the previous episode. One may think at the initialization step as a change in the choice of the representative player who provides the data flow. As the number of episodes increases, one expects the distribution of the representative player to converge to the limiting distribution. Within a given learning step, the environment computes an estimate of $\mu_n$ based on the current state of the agent $X_{n}$, provides the next state $X_{n+1}$ and assigns the cost $f_{n+1}$ given the triple $(X_{n},A_{n},\mu_{n})$. In other words, the environment consists of the dynamics of the agent and the cost structure. The case of our interest corresponds to the one in which the dynamics of the agent and the cost structure are unknown.  In this way, introducing the RL paradigm is equivalent to define a data driven approach to solve mean field models which may scale their applicability to real world problems.

In contrast with standard Q-learning, since in the mean field framework the cost function also depends on the distribution of the population, the goal here consists in learning the optimal strategy along with the corresponding ergodic distribution of the population, i.e. $(\hat \alpha, \hat \mu)$ in the MFG setting and $(\alpha^*, \mu^*)$ in the MFC setting. Based on the intuition provided in Section~\ref{sec: 2scale} related to the two timescale approach, we propose Algorithm~\ref{algo:U2MFQL}. At each step, we update the Q-table at the observed state-action pair $Q(X_{n},A_{n})$. With a different learning rate, the estimate of the distribution is updated based on the operator $\boldsymbol{\delta}:\mc{X} \mapsto \Delta^{|\mc{X}|} $ which maps the next observed state $X_{n+1} \in \mc{X}$ to the corresponding one-hot vector measure.  
To be specific, we identify the simplex $\Delta^{|\mc{X}|}$ with the subset $\left\{\left[\mu(x_i)\right]_{i=0,\dots, |\mc{X}|-1} \,:\,\mu(x_i)\in[0,1]\,\text{and}\, \sum_i \mu(x_i) = 1\right\}$ of $\RR^{|\mc{X}|}$. Then $\boldsymbol{\delta}$ is the function which associates to each element of $\mc{X} = \{x_0,\dots, x_{|\mc{X}|-1}\}$ the corresponding element of the canonical basis $(e_0,\dots,e_{|\mc{X}|-1})$ of $\RR^{|\mc{X}|}$, i.e.,  for each $i =0,\dots,|\mc{X}|-1$, $\boldsymbol{\delta}(x_i) = e_{i}$, which is an element of $\Delta^{|\mc{X}|}$ by the above identification. In order to learn the limiting distribution of the population through successive learning episodes, an estimate $\mu_{n_i}$ is computed for each step $n_i$ based on the sample $X^k_{n_i}$ collected from episodes $k=1,2,\dots.$ This approach attempts to minimize the correlation of the sampled states. The update rule presented in algorithm \ref{algo:U2MFQL} allocates more weight on the most recent samples allowing to forget progressively the initial sample that were obtained by a distribution far from the limiting one. At convergence, one may expect each $\mu_{n_i}$ to be an estimate of the limiting distribution.

The algorithm returns both an approximation $\mu_T^k$ of the distribution and an approximation $Q^k$ of the Q-function, from which an approximation of the optimal control can be recovered as $x \mapsto \argmin_{a \in \mc{A}}Q^k(x,a)$.

\begin{algorithm}[h]
\caption{Unified Two Timescales Mean Field Q-learning - Tabular version\label{algo:U2MFQL}} 
\begin{algorithmic}[1]
\REQUIRE $T$ : number of time steps in a learning episode, 
\\ $\mathcal{{X}} =\{ x_0, \dots, x_{|\mathcal{{X}}|-1}\}$ : finite state space, \\ 
$\mathcal{{A}} =\{ a_0, \dots, a_{|\mathcal{{A}}|-1}\}$ : finite action space, \\
$\mu_0$ : initial distribution of the representative player,\\ 
$\epsilon$ : parameter related to the $\epsilon-$greedy policy,\\
$tol_{\mu}$, $tol_Q$ : break rule tolerances.
\STATE \textbf{Initialization}: $Q^0(x,a)=0$ for all $(x,a)\in \mathcal{{X}}\times \mathcal{{A}}$,  $\mu^0_{n}=\bracket{\frac{1}{|\mathcal{{X}}|}, \dots,\frac{1}{|\mathcal{{X}}|}}$ for $n=0, \dots, T$ 

\FOR{each episode $k=1,2,\dots$}
\STATE \textbf{Initialization:} Sample $X^k_{0} \sim \mu^{k-1}_{T}$ and set $Q^k \equiv Q^{k-1}$  
\FOR{$ n \gets 0$ to $T-1$} 
\STATE
\textbf{Update} $\mu$: \\
$\mu^{k}_{n} = \mu^{k-1}_{n} + \rho^{\mu}_{k} (\boldsymbol{\delta}(X^k_{n}) - \mu^{k-1}_{n})$ where $\boldsymbol{\delta}(X^k_{n}) = \bracket{\mathbf{1}_{x_0}(X^k_{n}), \dots, \mathbf{1}_{x_{|\tilde{\mc{X}}|-1}}(X^k_{n})} $
\STATE
\textbf{Choose action $A^k_{n}$} using the $\epsilon$-greedy policy derived from $Q^k(X^k_{n},\cdot)$  
\\
\textbf{Observe cost $f_{n+1}=f(X^k_{n},A^k_{n},\mu^{k}_{n})$} and state $X^k_{n+1}$ provided by the environment %

\STATE
\textbf{Update} $Q$:\\ $Q^k(X^k_{n},A^k_{n})= Q^k(X^k_{n},A^k_{n})+\rho^Q_{k,n,X^k_{n},A^k_{n}} [f_{n+1} +\gamma \min_{a'\in \mc{A}} Q^k(X^k_{n+1},a')-Q^k(X^k_{n},A^k_{n})]$
\ENDFOR
\IF{ $ \delta(\mu_T^{k-1},\mu_T^{k}) \leq tol_{\mu}$ and $\Vert Q^k-Q^{k-1}\Vert_{1,1}<tol_Q$}
\STATE break
\ENDIF
\ENDFOR
\RETURN $(\mu^k,Q^k)$
\end{algorithmic}
\end{algorithm}

The Unified Two Timescales Mean Field Q-learning (U2-MF-QL) algorithm represents a unified approach to solve mean field problems.  On the one hand, by choosing the learning rate for the distribution of the population slower than the one for the Q-table, we obtain the solution to the MFG problem. Similarly to the scheme presented in Section~\ref{sec: 2scale}, the iterations in $Q$ perceive the quantity $\mu$ as quasi-static mimicking the freezing of the flow of measures characteristic in the solving scheme of a MFG.
On the other hand, by choosing the learning rate for the mean-field term faster than the one for the Q-table, we obtain the solution to the MFC problem. Indeed, this choice of the parameters guarantees that the distribution changes instantaneously for each variation of the control function (Q-table) replicating the structure of the MFC problem.

\subsection{Application to continuous problems}
\label{sec:app-continuous}
Although it is presented in a setting with finite state and action spaces, the application of the algorithm U2-MF-QL can be extended to continuous problems. Such adaptation requires truncation and discretization procedures to time, state and action spaces which should be calibrated based on the specific problem.

In practice, the learning episode will correspond to a uniform discretization $\tau = \{{t_n} \}_{n\in \{0,\dots, |\tau|-1\}} $ of a time interval $[0,T]$ with $T$ large enough. The environment will provide the new state and reward at these discrete times. We assume that $T$ is large enough to reach the ergodic regime. The continuous state space  will be represented as the disjoint union of equally sized neighbors. Each of them will be identified by its centroid and it will correspond to a row of the $Q$ table. Likewise, actions will be provided to the environment in a finite set $ \mc{A} = \{ a_0, \dots, a_{|\mc{A}|-1} \} \subset \RR^k $, and the distribution $\mu$ will be estimated on the set of centroids $\mc{X}=\{ x_0, \dots, x_{|\mc{X}|-1} \} \subset \RR^k $ identifying $\mu(x_i)$ as the probability of the neighbor centered in $x_i$. Then Algorithm~\ref{algo:U2MFQL} is ran on those spaces.

We will use the benchmark linear-quadratic models given in continuous time and space for which we have explicit formulas given in Appendix~\ref{appendix : calculations}. In that case, we use an Euler discretization. We do not address here the error of approximation since the purpose of this comparison with a benchmark is mainly for illustration.

\section{Numerical experiments}\label{sec:results}

In this section we illustrate our algorithm on a benchmark problem which admits an analytical solution. 

\subsection{Benchmark problem}
We illustrate our algorithm 
on the following model, in which the mean-field interactions are through the first moment. We take $d=k=1$, 
\begin{equation}\label{eq: benchmark}
  f(x,\alpha,\mu) = \frac{1}{2}\alpha^2 + c_1 \left( x- c_2 m\right)^2 + c_3 \left( x- c_4 \right)^2 + c_5 m^2,
    \qquad
    b(x,\alpha,\mu) = \alpha,   
\end{equation}
where $m = \int_{\RR} x \mu(x) dx$. Here the parameters $c_2, c_4 \in \RR$ and $c_1, c_3, c_5 \in \RR_+$ are constant such that $c_1+c_3 - c_1  c_2 \neq 0$. In this model the drift is simply the control, while the running cost can be understood as follows: the first term is a quadratic cost for controlling the diffusion, which penalizes high velocity, the second term incorporates mean field interactions and encourages the agents to be close to $c_2 m$ (if $c_2=1$, this has a mean-reverting effect), the third term creates an incentive for each agent to be close to the target position $c_4$, and the fourth term penalizes the population when its mean $m$ is far away from zero. We thus obtain a complex combination of various effects, which can be balanced depending on the choice of parameters. 

We consider both the corresponding MFG and MFC problems in the asymptotic formulation. The details on the solutions of these problems and their connection to the non-asymptotic formulation are given in the appendix.

\subsection{Numerical results}

We present the results obtained by applying the U2-MF-QL algorithm to the mean field problems based on the running cost and drift specified in \eqref{eq: benchmark}.  These results show how the algorithm successfully learns the MFG solution or the MFC solution based on simply tuning the learning rates. Moreover, this shows that the algorithm manages to solve problems defined on continuous time and continuous state, action spaces even though it is conceived for discrete problems. Such applications require to apply truncation and discretization procedures to time, state and actions which should be calibrated on a problem base. 

We consider the problem defined by the choice of parameters: $c_1=0.25$, $c_2=1.5$, $c_3=0.50$, $c_4=0.6$, $c_5=5$, discount parameter $\beta=1$ and volatility $\sigma = 0.3$. The infinite time horizon is truncated at time $T=20$. The continuous time is discretized using step  $\Delta t= 10^{-2}$. Recall that $\gamma$ in the discrete time setting corresponds to $e^{-\beta \Delta t}$ in the continuous time setting. The action space is given by $\mc{A} = \{ a_0=-1, \dots, a_{N_{\mc{A}}}=1 \}$ and the state space by $\mc{X} = \{ x_0=-2+x_c, \dots, x_{N_{\mc{X}}}=2+x_c \}$, where $x_c$ is the center of the state space. The step size for the discretization of the spaces  $\mc{X}$ and $\mc{A}$ is given by $\Delta_{.} = \sqrt{\Delta t} = 10^{-1} $.  The state space $\mc{X}$ and the action space $\mc{A}$ have been chosen large enough to make sure that the state is within the boundary most of the time. In practice, this would have to be calibrated in a model-free way through experiments.  In this example, for the numerical experiments, we used the knowledge of the model. In particular, we choose $x_c=0.5$ for both examples.  Note that if the problem under consideration is posed on finite spaces, this issue does not occur since the domain is fixed.  The exploitation-exploration trade off is tackled on each episode  using an $\epsilon-$greedy policy, see~\eqref{intro: pi}. In particular, the value of $\epsilon$ is fixed to $0.15$.

\noindent
We present the following results for both the MFG and MFC benchmark examples:
\begin{enumerate}
	\item learning rates analyses;
	\item learning of the controls and the ergodic distribution;
	\item empirical error analyses;
	\item empirical analyses of the stopping criteria.
\end{enumerate}

\subsubsection{Learning rates analyses}

It is important to observe that even if in the MFC case the choice of $\rho^\mu_k$ below does not satisfy the classical Robbins-Monro summability condition recalled in Section \ref{sec: unified 2scale approach}, the numerical convergence of the algorithm is obtained suggesting that these requirements may be relaxed in this framework. Failing in satisfying these conditions generates a noisy approximation of the distribution $\mu$ in the MFC problem. However, averaging over the last 10k episodes allows to minimize such noise as showed in the Figures below. Based on the theoretical results given in \cite{even2003learning}, we define the learning rates appearing in Algorithm~\ref{algo:U2MFQL} as follows: 
\begin{equation} \label{eqn : rates}
 \rho^Q_{k,n,x,a}=\frac{1}{\parentheses{1+\#|(x,a,k,n)|}^{\omega^Q}},     \quad \quad \rho^{\mu}_k=\frac{1}{(1+k)^{\omega^{\mu}}}, 
\end{equation}
 where $\#|(x,a,k,n)|$ is the number of times that the algorithm visited state $x$ and performed action $a$ until episode $k$ and time $t_n$. The exponent $\omega^Q$ can take values in $(\frac{1}{2},1).$ The value of $\omega^{\mu}$ is chosen depending on the value of $\omega^Q$ and the cooperative or non-cooperative nature of the problem we want to solve. The algorithm is run over $80 \times 10^{3}$ episodes over the interval $[0,T]$.
 
\vskip 6pt
\noindent
\textbf{Figures \ref{fig:rates_MFG_500}, \ref{fig:rates_MFC_500}, \ref{fig:rates_MFG_80k}, \ref{fig:rates_MFC_80k}: comparison of the learning rates.} The solution of the MFG benchmark is reached based on the choice $(\omega^Q,\omega^{\mu})=(0.55,0.85)$, such that $\rho^{\mu}<\rho^Q $. As pointed out in section \ref{sec: unified 2scale approach}, by satisfying this relation the $Q$-function evolves faster than the estimation of the distribution mimicking the solving scheme of a MFG. On the other hand, the solution of the MFC benchmark can be obtained by opting for the pair of parameters $(\omega^Q,\omega^{\mu})=(0.65,0.15)$ such that $\rho^{\mu}>\rho^Q $. In figures \ref{fig:rates_MFG_500}, \ref{fig:rates_MFC_500}, \ref{fig:rates_MFG_80k}, \ref{fig:rates_MFC_80k}, we suppose that $\#|(x,a,k,1)| = k $.  The $x-$axis refers to the episode. The $y-$axis represents the rate evaluated at episode $k$. 
\begin{figure}[H]
\centering
\begin{minipage}{.5\textwidth}
  \centering
  \includegraphics[width=.9\linewidth]{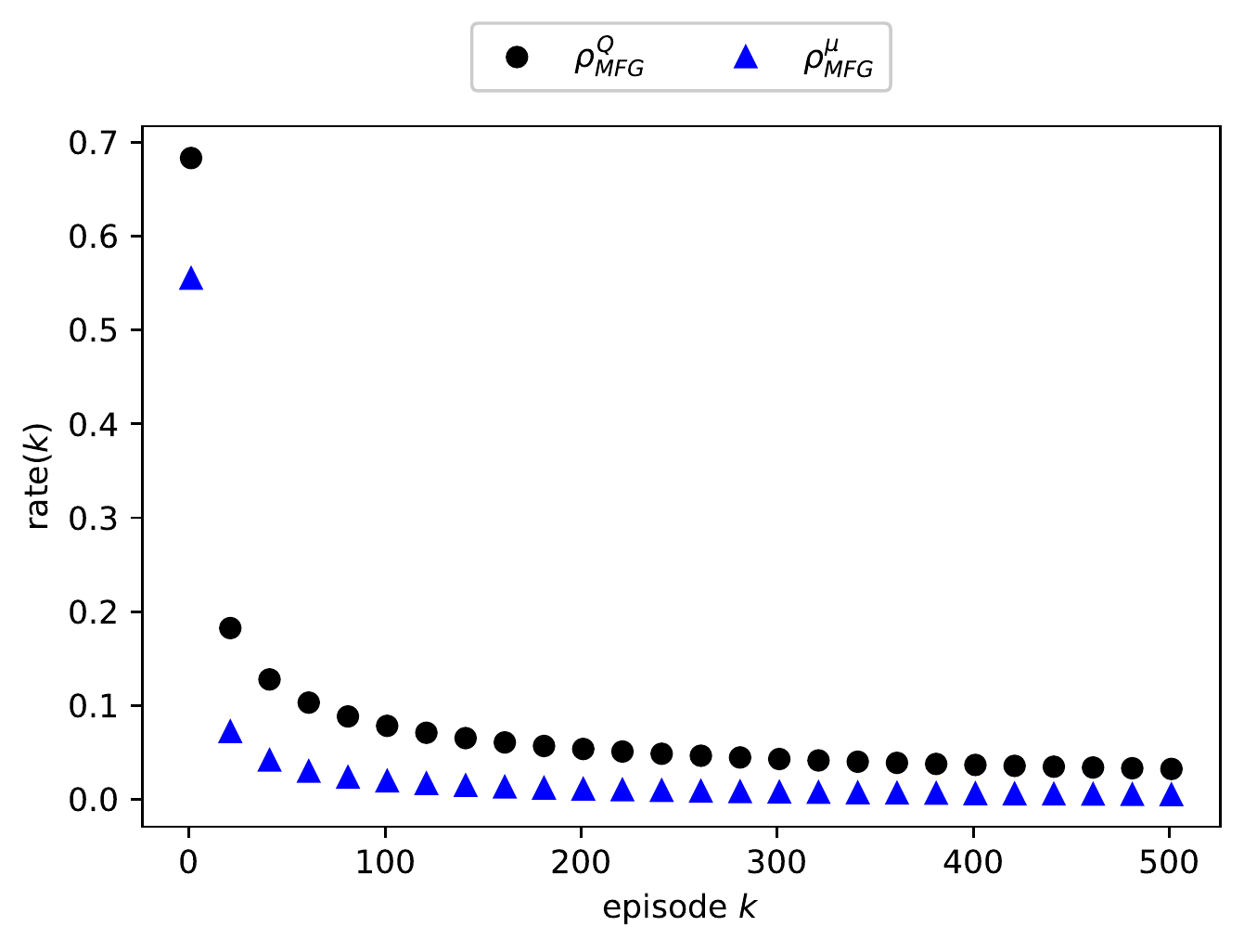}
  \caption{MFG: learning rates over the first $500$ episodes 
  }
  \label{fig:rates_MFG_500}
\end{minipage}%
\begin{minipage}{.5\textwidth}
  \centering
  \includegraphics[width=.9\linewidth]{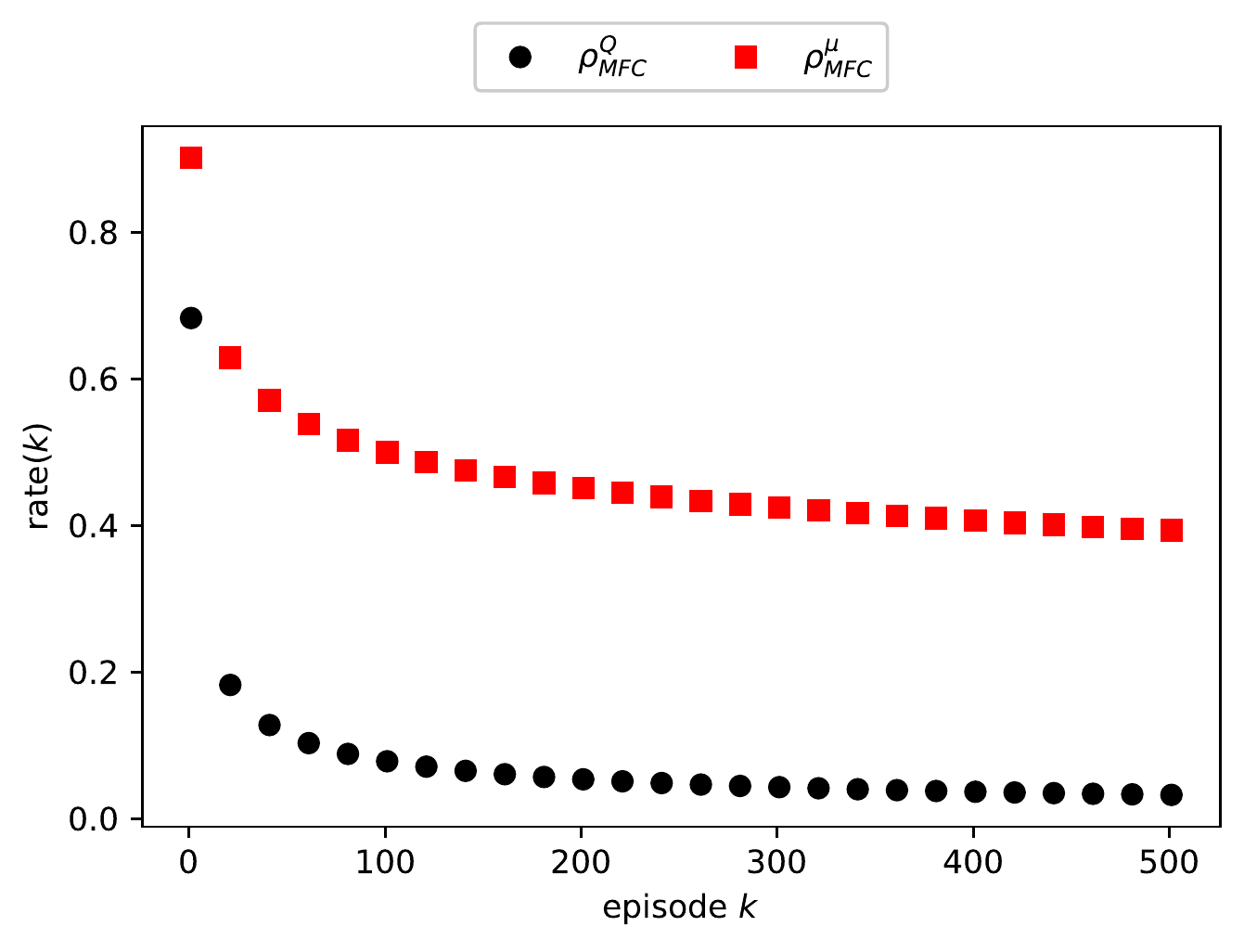}
  \caption{MFC: learning rates over the first $500$ episodes}
  \label{fig:rates_MFC_500}
\end{minipage}
\end{figure}

\begin{figure}[H]
\centering
\begin{minipage}{.5\textwidth}
  \centering
  \includegraphics[width=.9\linewidth]{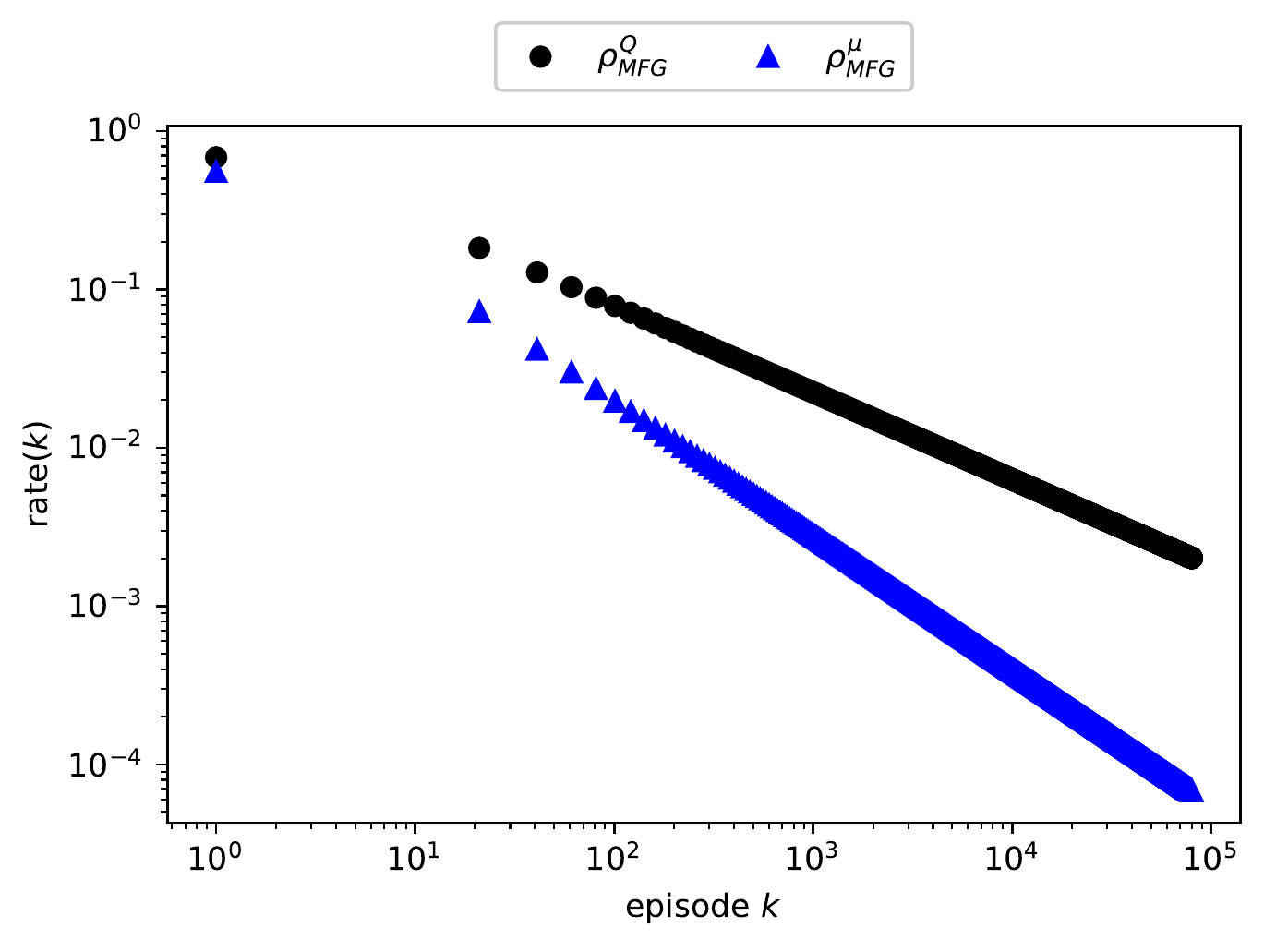}
  \caption{MFG: learning rates over $80 \times 10^{3}$ episodes 
  }
  \label{fig:rates_MFG_80k}
\end{minipage}%
\begin{minipage}{.5\textwidth}
  \centering
  \includegraphics[width=.9\linewidth]{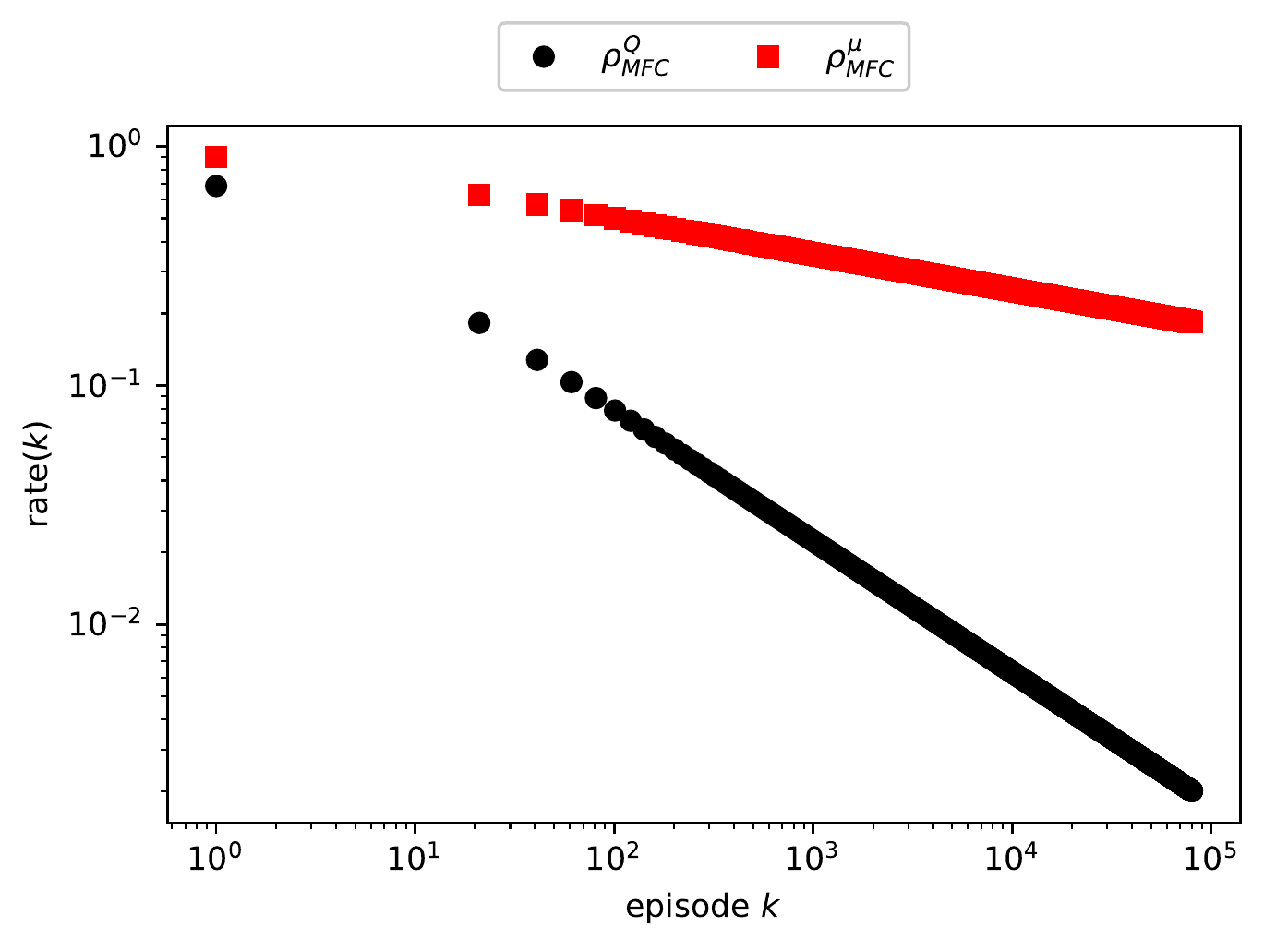}
  \caption{MFC: learning rates over $80 \times 10^{3}$ episodes }
  \label{fig:rates_MFC_80k}
\end{minipage}
\end{figure}
\vskip 6pt
\noindent
\textbf{Figures \ref{fig:2scale_MFG_low}, \ref{fig:2scale_MFG_mean}, \ref{fig:2scale_MFC_low}, \ref{fig:2scale_MFC_mean}: Empirical check of the two timescale conditions.} The U2-MF-QL algorithm is based on an asynchronous QL approach which makes use of different learning rates for each $Q(x,a)$ based on the number of visits to the relative  state-action pair. An empirical check of the two timescale conditions presented in section \ref{sec: unified 2scale approach} is presented in the following plots. The number of visits to each state depends on their proximity to the mean of the ergodic distribution. As a proof of concept, the learning rates for two different states in the MFG and MFC frameworks are analyzed after $80  \times 10^3$ learning epochs. The plots on the left are relative to the state on the left bound of $\mc{X}$, while the plots on the right are relative to the closest state to the theoretical mean. Each plot shows the value of the learning rates $\rho_k^{\mu}$ and  $\rho_{k,n,x,a}^{Q}$ together with the counter of visits to each pair $(x,a)$. The two timescale conditions are satisfied in each plot. The number of visits changes from order $10^2$ for the state on the border of $\mc{X}$ to order $10^7$ for the closest state to the ergodic mean. The $x-$axis refers to the action. The left $y-$axis represents the learning rate. The right $y-$axis represents the counter of visits for each state-action pair. 
\begin{figure}[H]
\centering
\begin{minipage}{.5\textwidth}
  \centering
  \includegraphics[width=.9\linewidth]{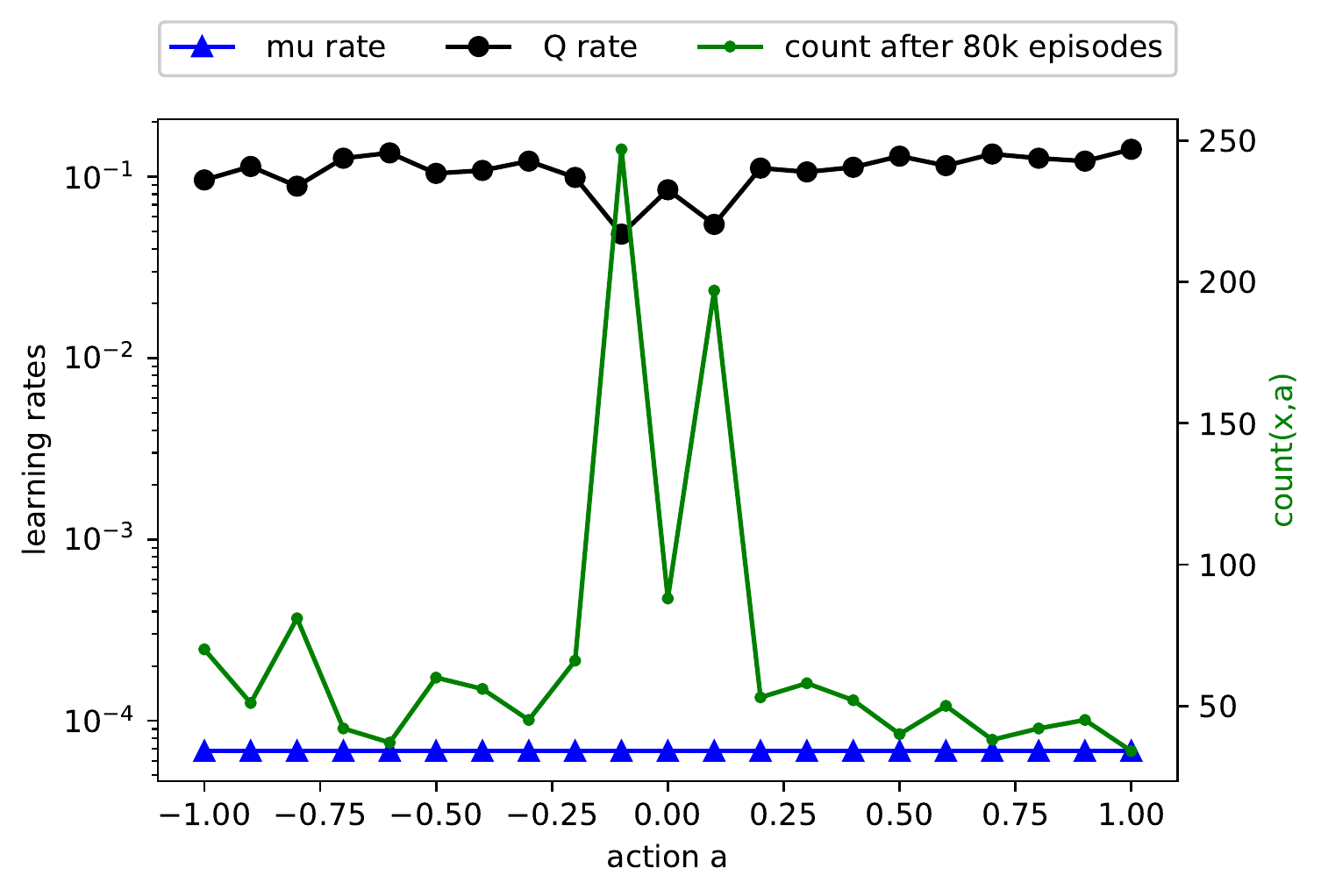}
  \caption{MFG: comparison learning rates for state $x = -1.50$
  }
  \label{fig:2scale_MFG_low}
\end{minipage}%
\begin{minipage}{.5\textwidth}
  \centering
  \includegraphics[width=.9\linewidth]{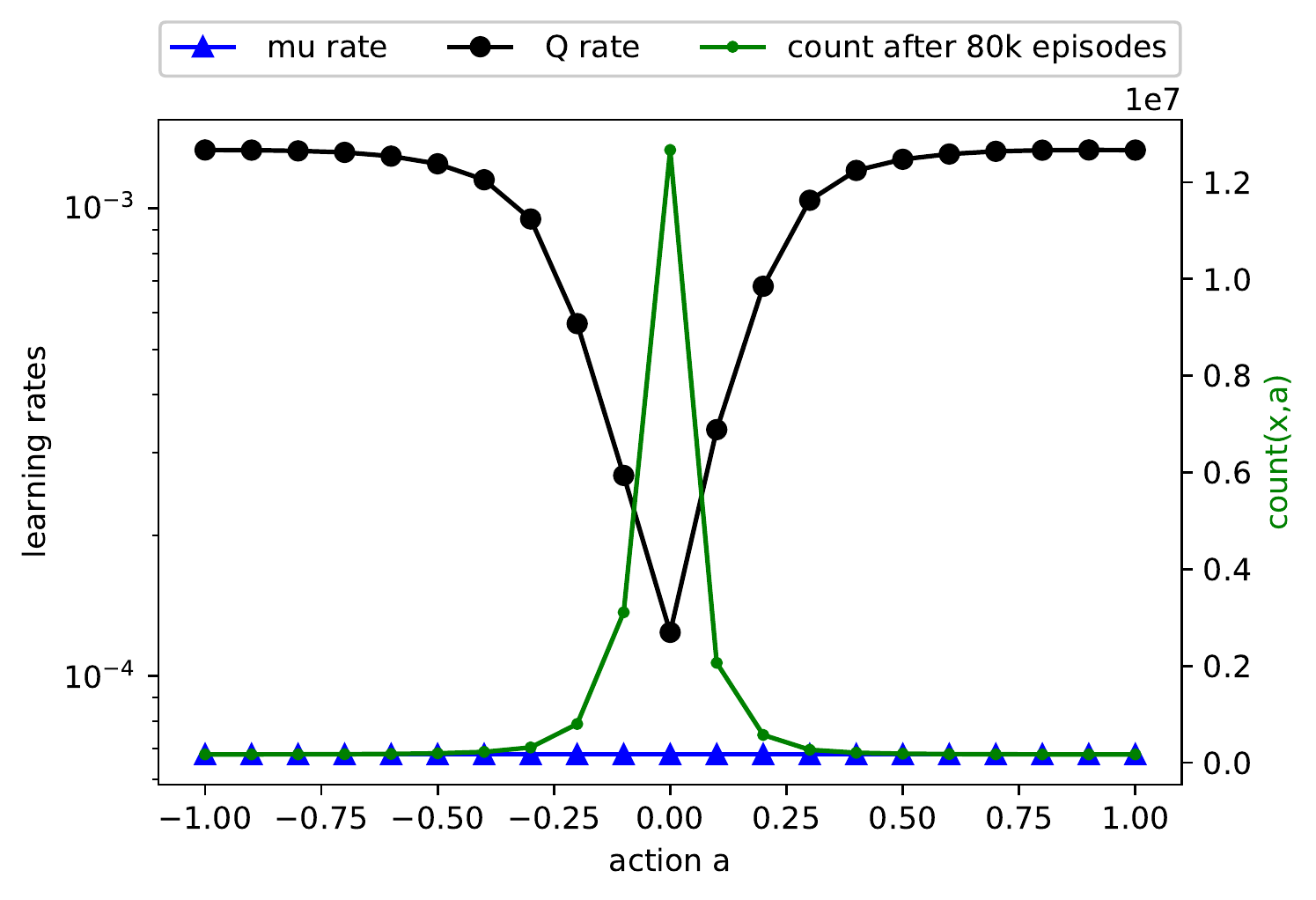}
  \caption{MFG: comparison learning rates for state $x = 0.80$}
  \label{fig:2scale_MFG_mean}
\end{minipage}
\end{figure}

\begin{figure}[H]
\centering
\begin{minipage}{.5\textwidth}
  \centering
  \includegraphics[width=.9\linewidth]{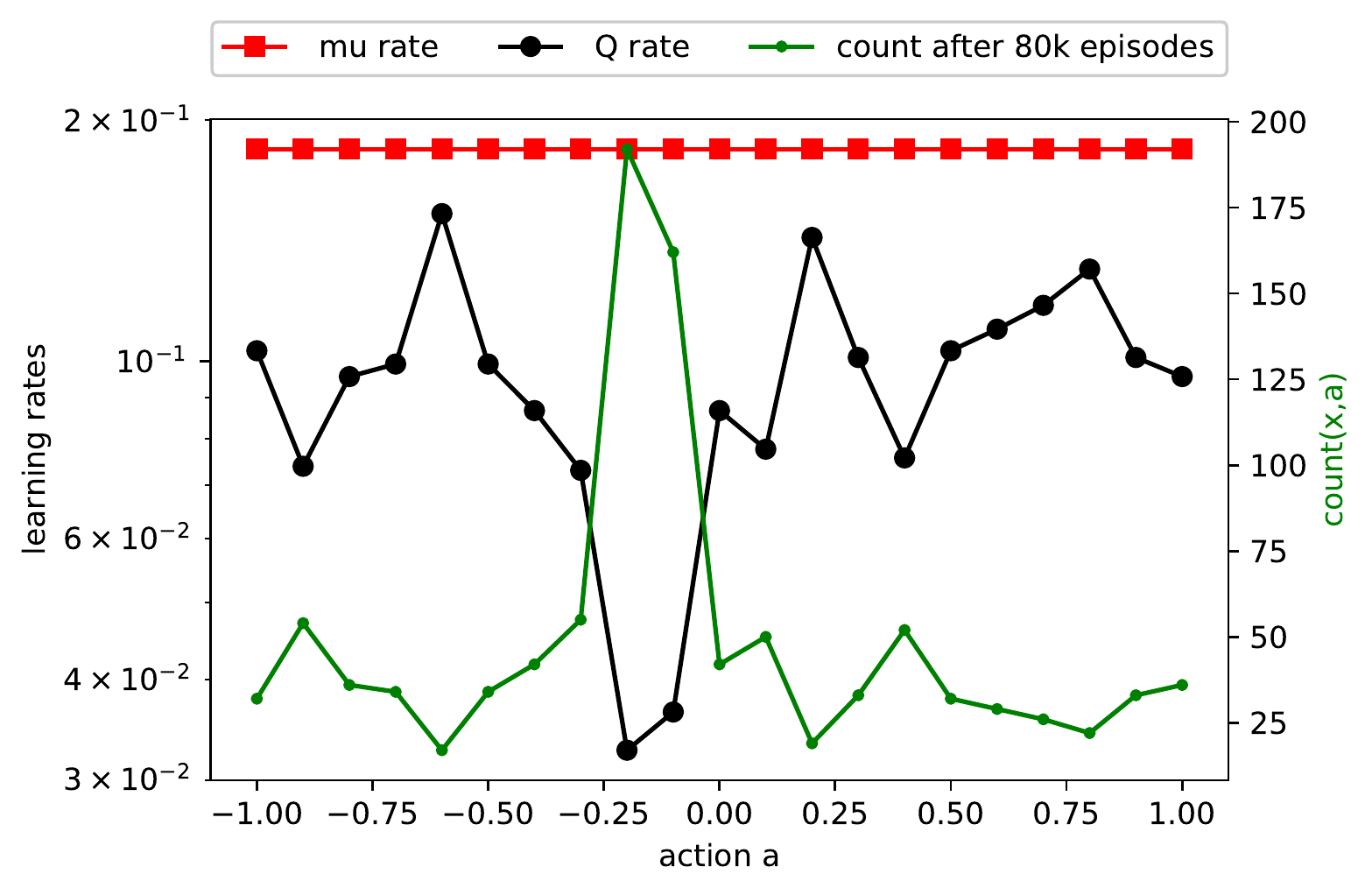}
  \caption{MFC:  comparison learning rates for state $x = -1.50$
  }
  \label{fig:2scale_MFC_low}
\end{minipage}%
\begin{minipage}{.5\textwidth}
  \centering
  \includegraphics[width=.9\linewidth]{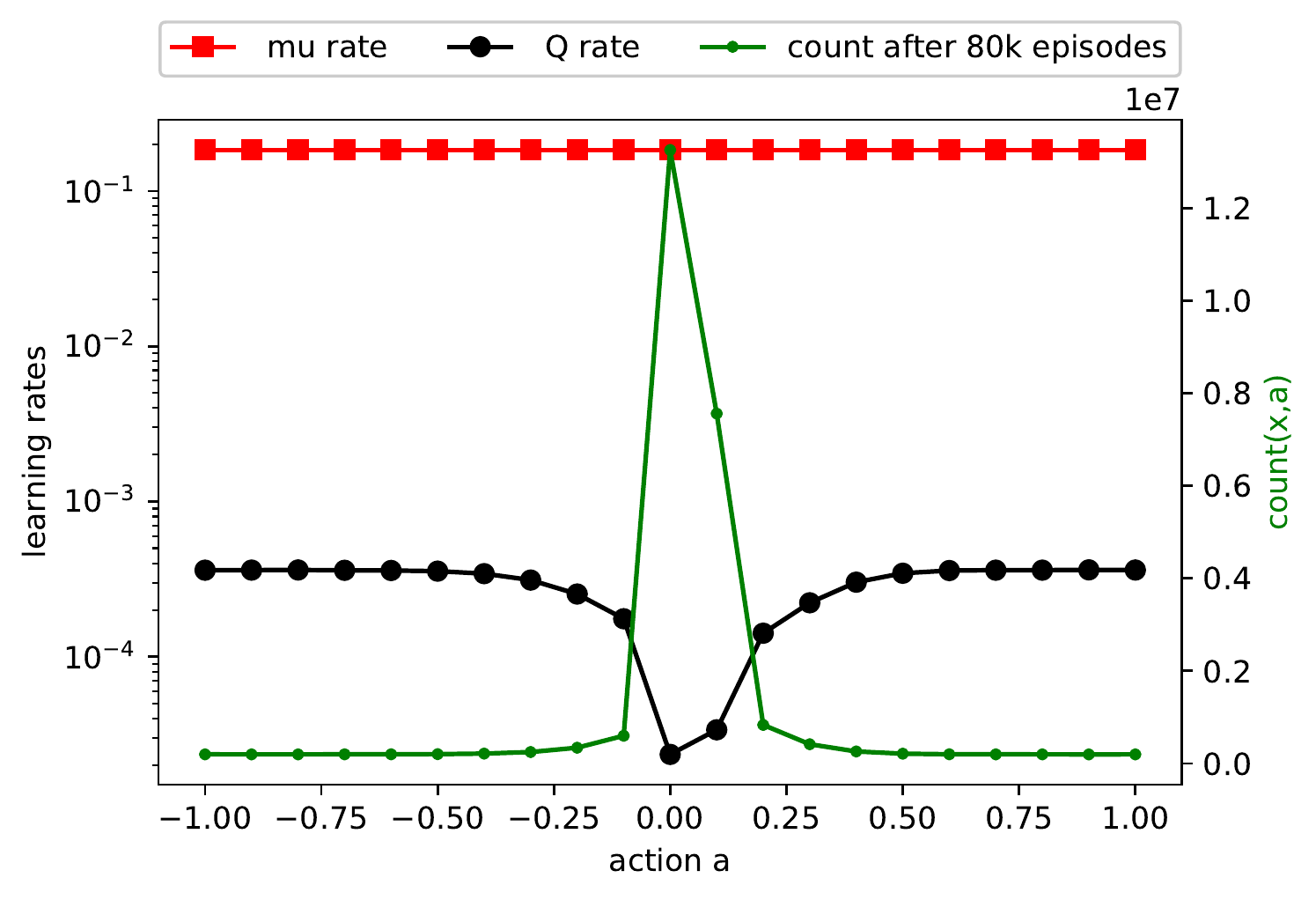}
  \caption{MFC:  comparison learning rates for state $x = 0.10$ }
  \label{fig:2scale_MFC_mean}
\end{minipage}
\end{figure}

\subsubsection{Learning of the controls and the ergodic distribution}
\textbf{Figures \ref{fig:control_MFG}, \ref{fig:control_MFC}, \ref{fig:control_2_MFG}, \ref{fig:control_2_MFC}, \ref{fig:v_fct_MFG}, \ref{fig:v_fct_MFC}:  controls, distributions and value functions learned by the algorithm.} The controls and the distribution learned by the algorithm are compared with the theoretical solution obtained in the appendix \ref{appendix : calculations}. As presented in Section \ref{sec: 2scale}, the control $\alpha(x)$ is obtained as the $\arg\min_aQ(x,a)$. Similarly, the value function $V(x)$ can be recovered as $\min_aQ(x,a)$. The $x-$axis represents the state variable $x$. In Figures \ref{fig:control_MFG}, \ref{fig:control_MFC}, \ref{fig:control_2_MFG}, \ref{fig:control_2_MFC}, \ref{fig:v_fct_MFG}, \ref{fig:v_fct_MFC}, the left $y-$axis relates to the action $\alpha(x)$. The right $y-$axis refers to the probability mass $\mu(x)$. The red (resp. blue) line shows the theoretical control function for the MFG (resp. MFC) problem. The black dots are the controls learned by the algorithm. Note that the peak of the distribution $\mu$ is not located at the same point $x$ for MFG and MFC.  Note that the peak of the distribution $\mu$ is not located at the same point $x$ for MFG and MFC. In Figures  \ref{fig:control_MFG}, \ref{fig:control_2_MFG}, the $y-$axis corresponds to the value function $V(x)$. The continuous lines refer to the theoretical solution. The black dots are the numerical approximation recovered by the $Q$-function. We observe that the algorithm converges to different solutions based on the choice of the pair $(\omega^Q,\omega^{\mu})$. On the left, the choice $(\omega^Q,\omega^{\mu})=(0.55,0.85)$ produces the approximation of the solution of the MFG. On the right, the set of parameters  $(\omega^Q,\omega^{\mu})=(0.65,0.15)$ lets the algorithm learn the solution of the MFC problem. In Figures \ref{fig:control_MFG}
, \ref{fig:control_MFC} the learned controls and the learned ergodic distribution is averaged over $10$ runs. In Figures \ref{fig:control_2_MFG}
, \ref{fig:control_2_MFC} the learned controls and the learned distribution $\mu_T$ is averaged over $10$ runs and the last $ 10^4$ episodes.
\begin{figure}[H]
\centering
\begin{minipage}{.5\textwidth} 
  \centering 
  \includegraphics[width=.9\linewidth]{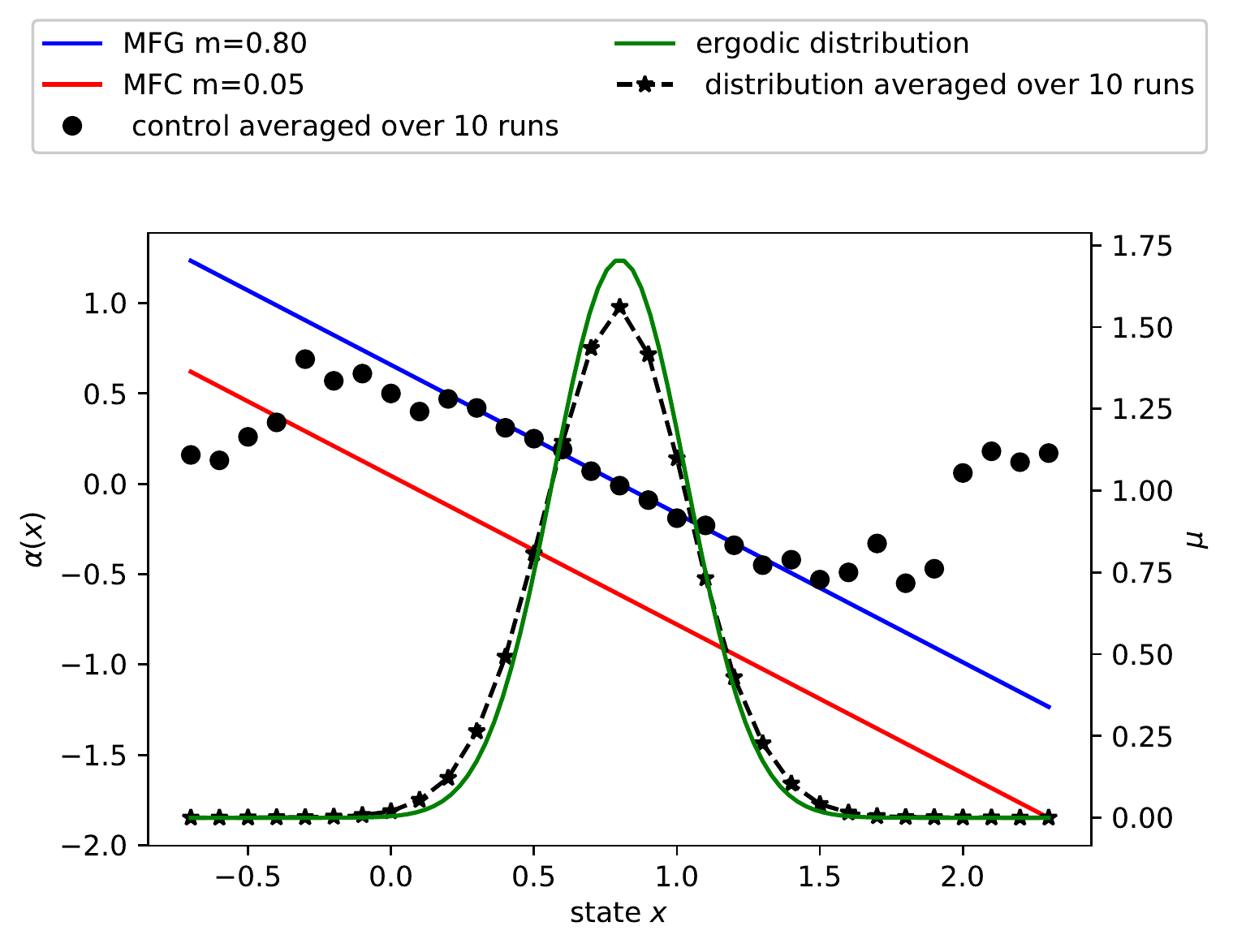}
  \caption{MFG: results averaged over $10$ runs
  }
  \label{fig:control_MFG}
\end{minipage}%
\begin{minipage}{.5\textwidth} 
  \centering 
  \includegraphics[width=.9\linewidth]{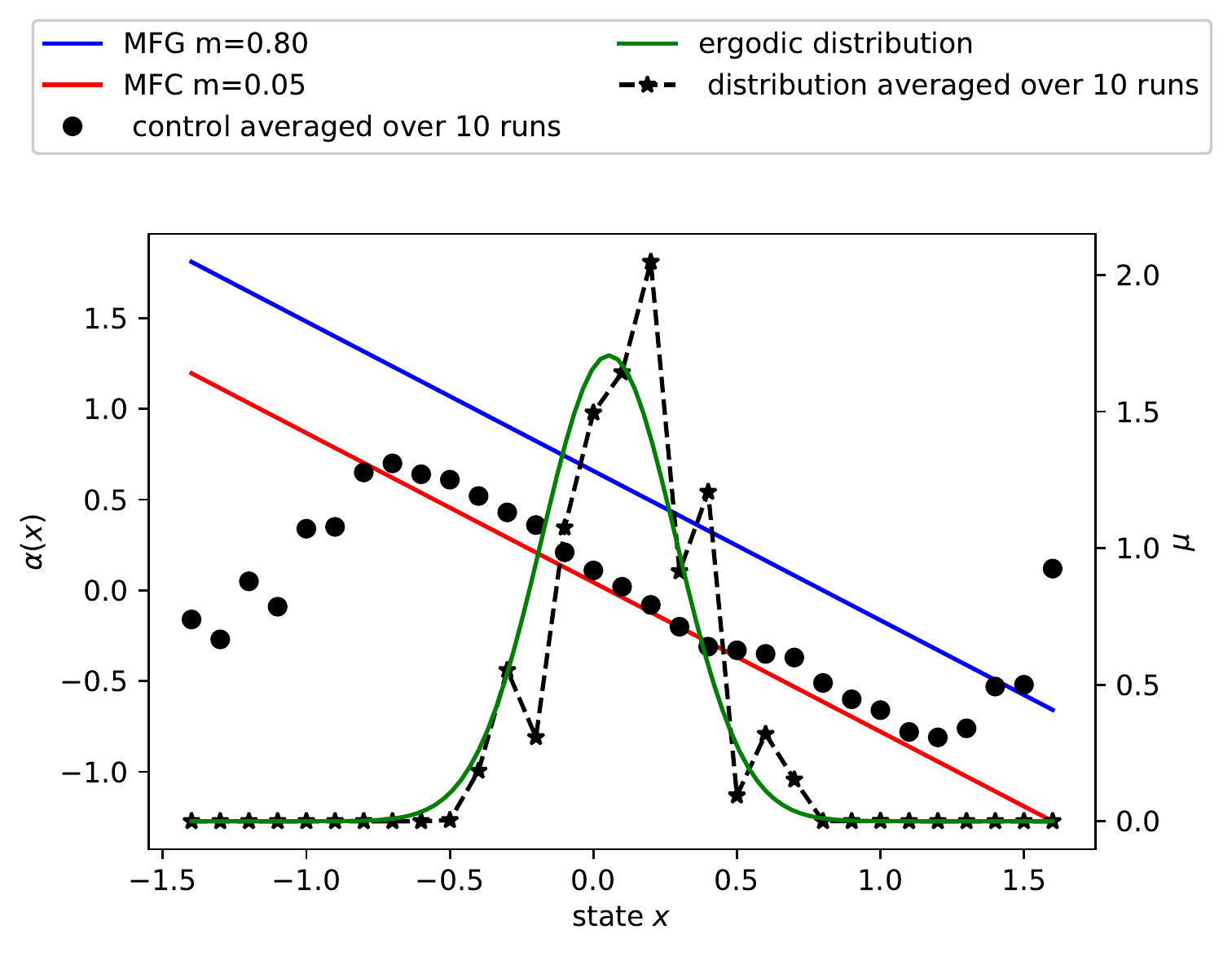}
  \caption{MFC: results averaged over $10$ runs
  }
  \label{fig:control_MFC}
\end{minipage}%
\end{figure}

\begin{figure}[H]
\centering
\begin{minipage}{.5\textwidth} 
  \centering 
  \includegraphics[width=.9\linewidth]{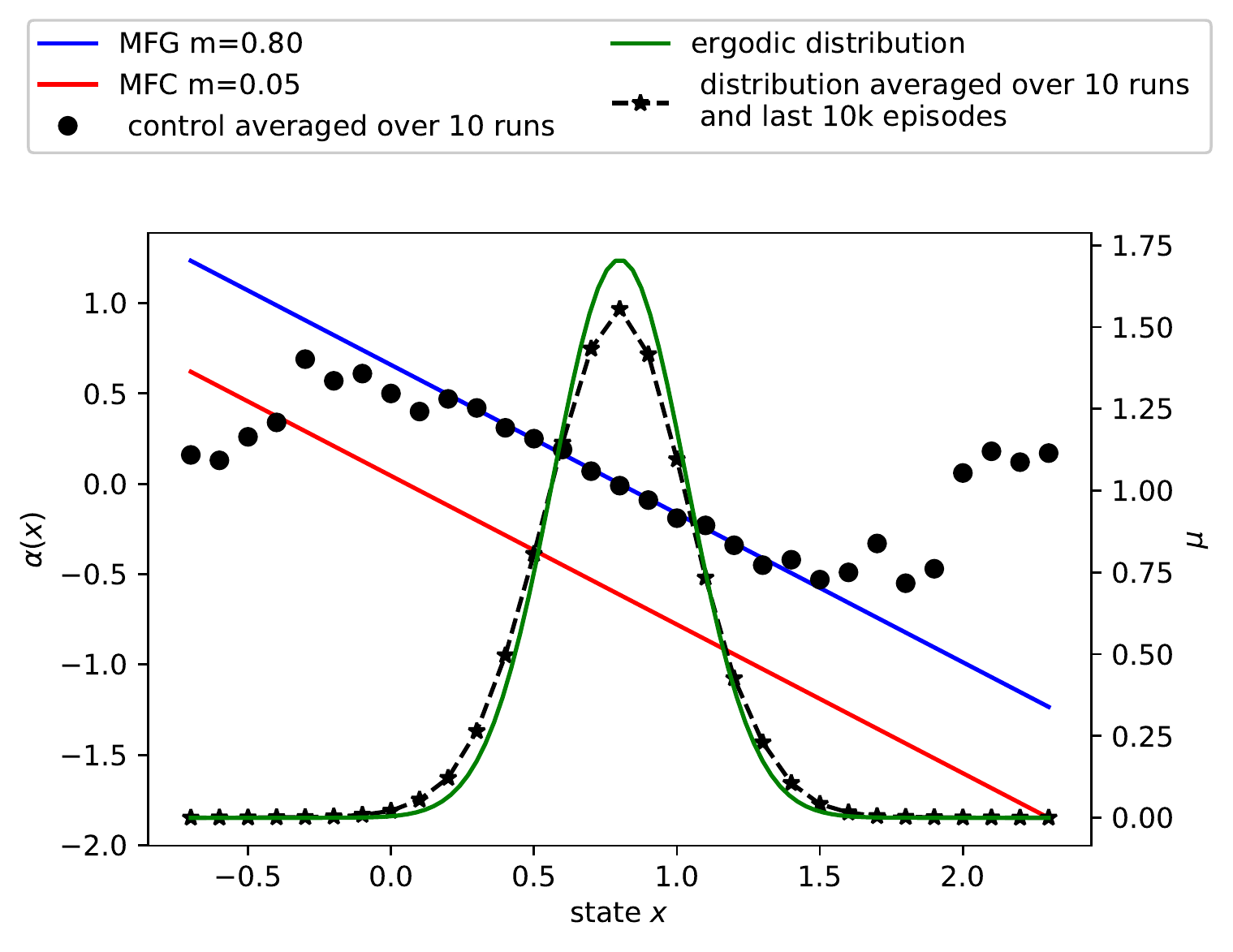}
  \caption{MFG: results averaged over $10$ runs and last $10k$ episodes 
  }
  \label{fig:control_2_MFG}
\end{minipage}%
\begin{minipage}{.5\textwidth} 
  \centering 
  \includegraphics[width=.9\linewidth]{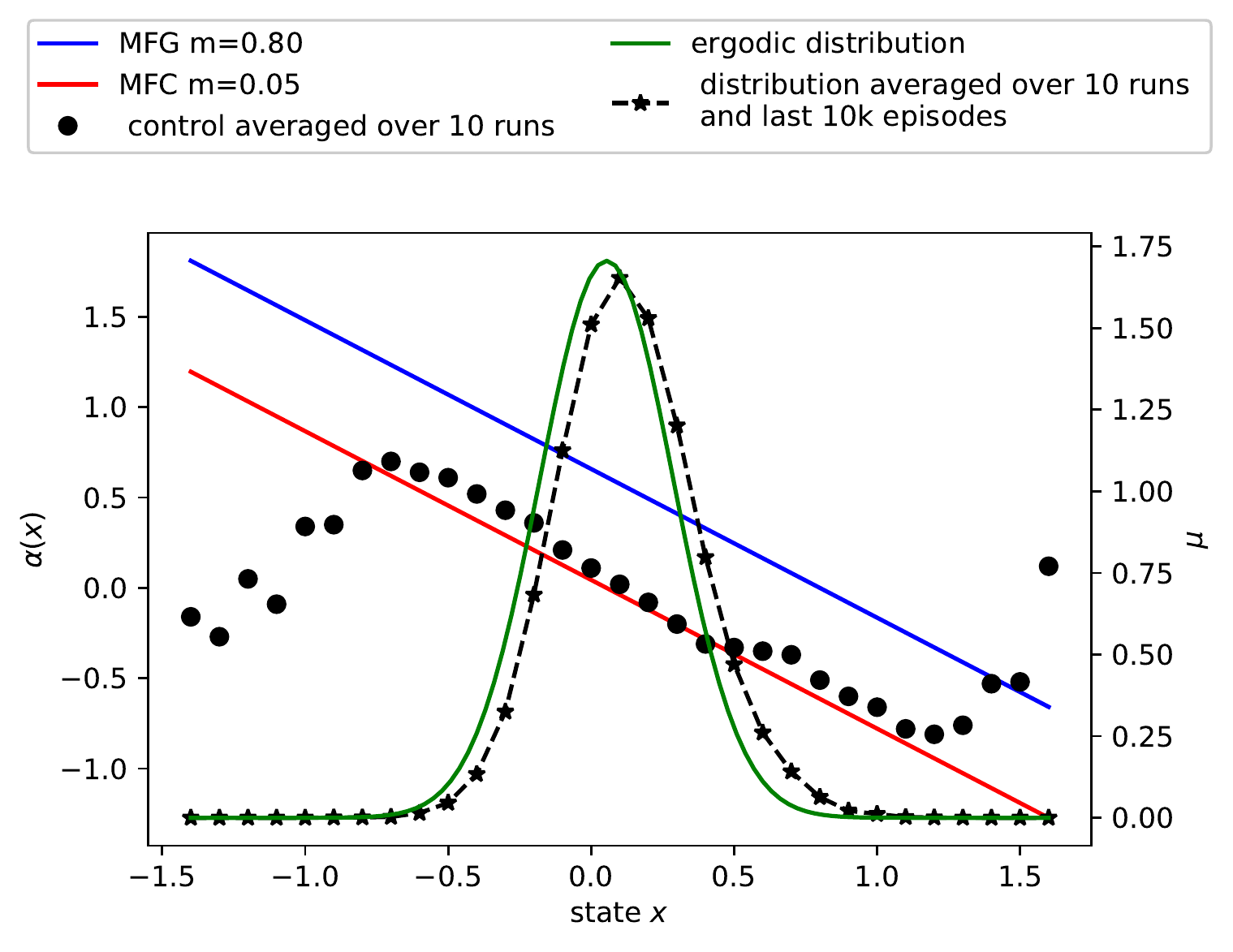}
  \caption{MFC: results averaged over $10$ runs and last $10k$ episodes 
  }
  \label{fig:control_2_MFC}
\end{minipage}%
\end{figure}

\begin{figure}[H]
\centering
\begin{minipage}{.5\textwidth} 
  \centering 
  \includegraphics[width=.9\linewidth]{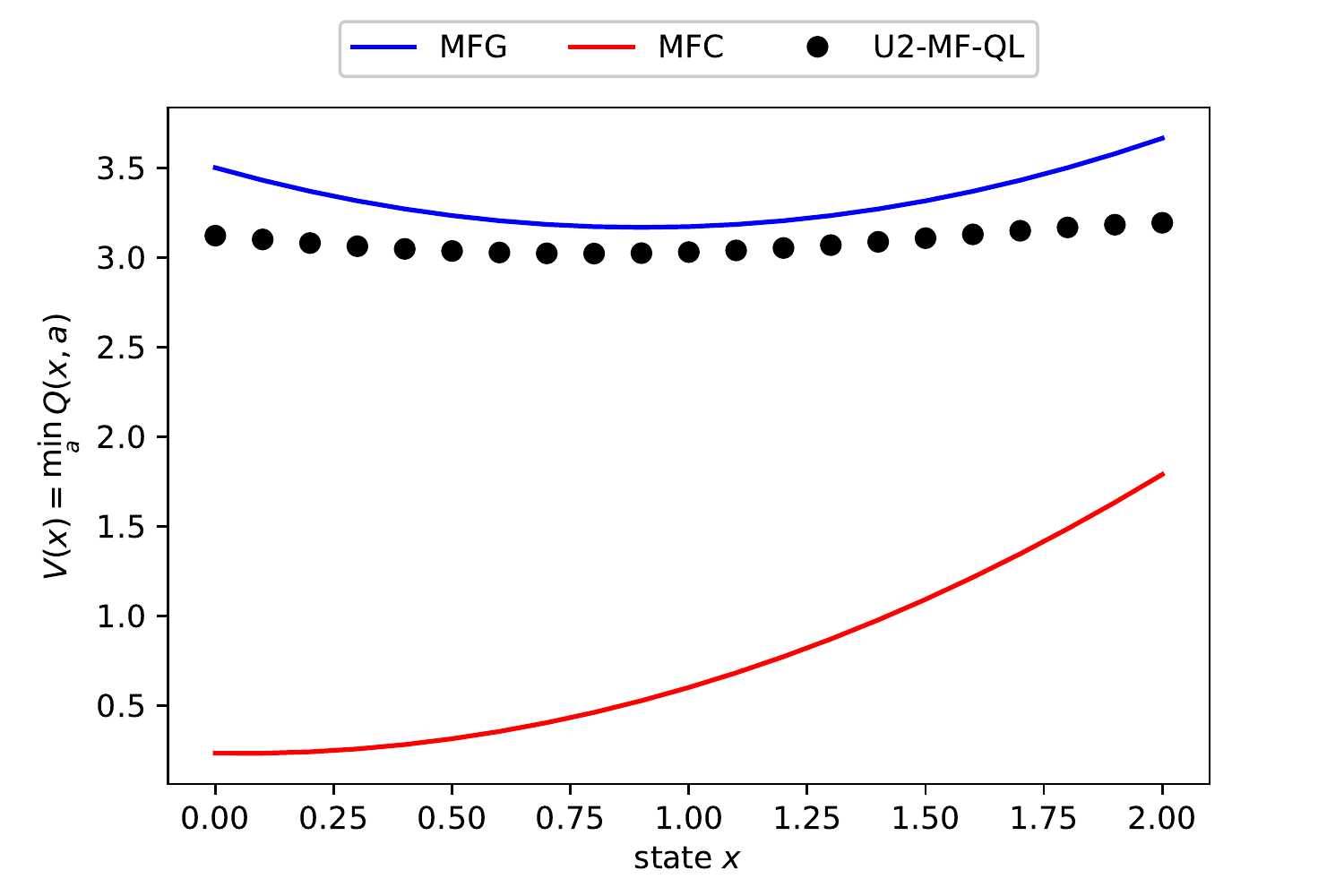}
  \caption{MFG: value function
  }
  \label{fig:v_fct_MFG}
\end{minipage}%
\begin{minipage}{.5\textwidth} 
  \centering 
  \includegraphics[width=.9\linewidth]{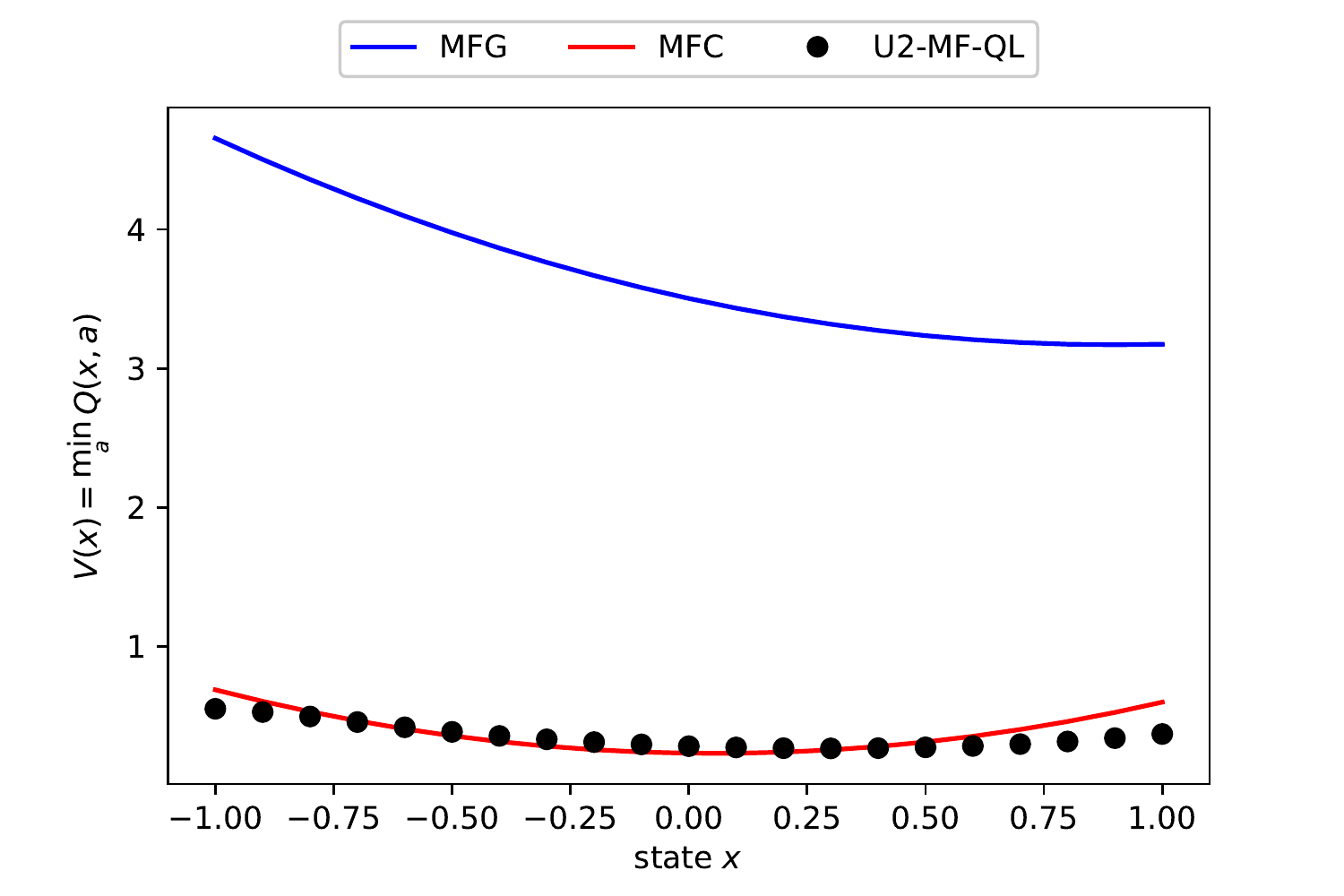}
  \caption{MFC: value function 
  }
  \label{fig:v_fct_MFC}
\end{minipage}%
\end{figure}

\subsubsection{Empirical error analyses}
\textbf{Figures \ref{fig:control_error_MFG_first}, \ref{fig:control_error_MFC_first}: MSE error on the control.} 
A metric used to evaluate the numerical results consists in the mean squared error (MSE) of the controls learned by episode $k$  with respect to the theoretical solution presented in Appendix~\ref{appendix : calculations}. In particular, this metric considers the states $x \in \mathcal{X}$ where the ergodic distribution $\hat\mu$ is mostly concentrated.  Let $\mathcal{C}_{MFG}\subset \mathcal{X}$ be centered in $\hat m$ s.t. $\hat \mu (\mathcal{C}_{MFG}) = 0.99$, then the mean squared error by episode k for run i and its average over all runs are defined as
$$
\text{MSE}_{\alpha}(i,k) = \frac{1}{{|\mc{C}_{MFG}|}}\sum_{j=0}^{|\mc{C}_{MFG}|-1} (\alpha^{i,k}(x_j)-\hat\alpha(x_j))^2, \quad \text{MSE}_{\alpha}(k) = \frac{1}{\#runs}\sum_{i=0}^{\#runs} \text{MSE}_{\alpha}(i,k). 
$$
The $x-$axis represents the number of episodes used for learning. The $y-$axis represents the mean squared error averaged over $10$ runs (solid line) and its standard deviation (shaded region). 
\begin{figure}[H]
\centering
\begin{minipage}{.5\textwidth}
  \centering
  \includegraphics[width=.9\linewidth]{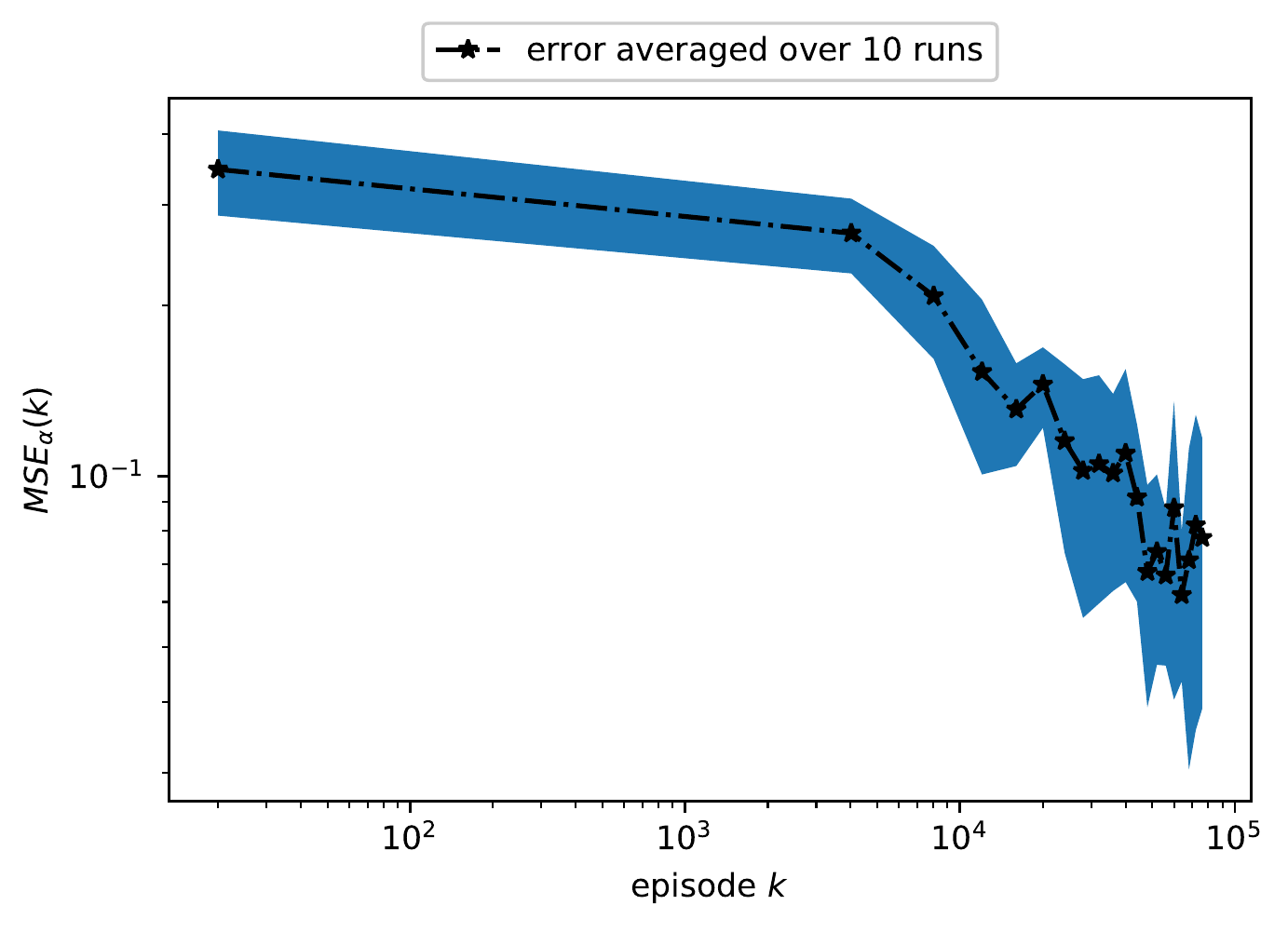}
  \caption{MFG: squared root of $\text{MSE}_{\alpha}(k)$ }
  \label{fig:control_error_MFG_first}
\end{minipage}%
\begin{minipage}{.5\textwidth}
  \centering
  \includegraphics[width=.9\linewidth]{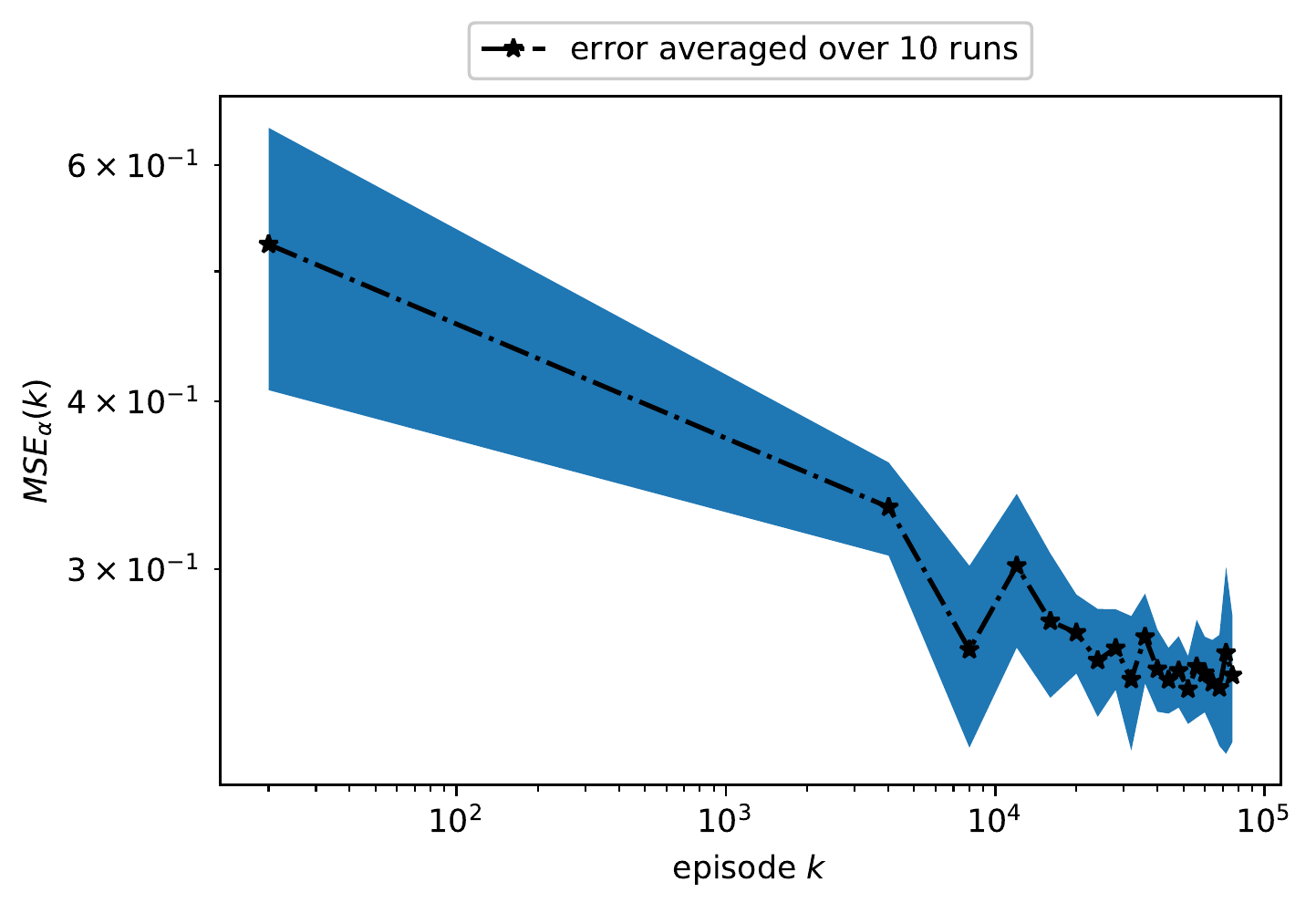}
  \caption{MFC: squared root of $\text{MSE}_{\alpha}(k)$}
  \label{fig:control_error_MFC_first}
\end{minipage}
  
\end{figure}

\vskip 6pt
\noindent
\textbf{Figures \ref{fig:error_mean_MFG}, \ref{fig:error_mean_MFC}:  MSE on the ergodic mean.} 
A metric used to evaluate the numerical results consists in the squared error of the ergodic mean learned by episode $k$ compared with its theoretical value obtained in Appendix \ref{appendix : calculations} averaged over the total numbers of runs, i.e. 
$$
    \text{MSE}_{m}(k) = \frac{1}{\#runs}\sum_{i=0}^{\#runs} (m_T^{i,k}-\hat m )^2 . 
$$

The $x-$axis represents the number of episodes used for learning. The $y-$axis represents the error averaged over $10$ runs (solid line) and its standard deviation (shaded region).  For the MFG, the error in the approximation of the ergodic mean reduces both in mean and standard deviation by increasing the number of episodes. For the MFC case, an oscillating behavior is  observed. The choice of $\omega_{\mu}=0.15$ in the learning rates defined in \ref{eqn : rates}  allows to quicker adjustment of the mean by allocating more weights on the most recent sample. In this way, the algorithm mimics the nature of the MFC problem at the expense of a slower and more oscillating convergence.

\begin{figure}[H]
\centering
\begin{minipage}{.5\textwidth}
  \centering
  \includegraphics[width=.9\linewidth]{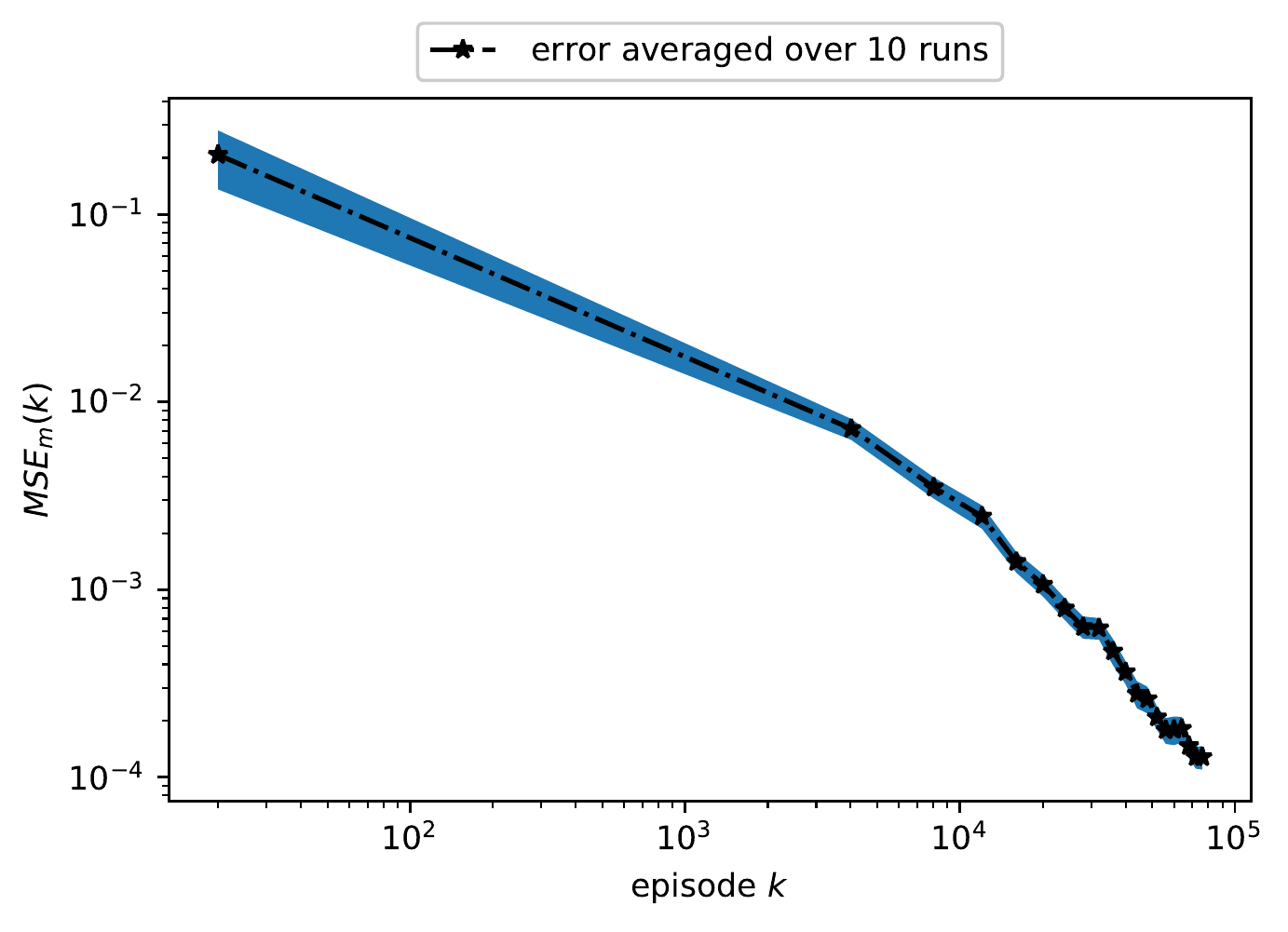}
  \caption{MFG: mean sqared error on $\hat m$ }
  \label{fig:error_mean_MFG}
\end{minipage}%
\begin{minipage}{.5\textwidth}
  \centering
  \includegraphics[width=.9\linewidth]{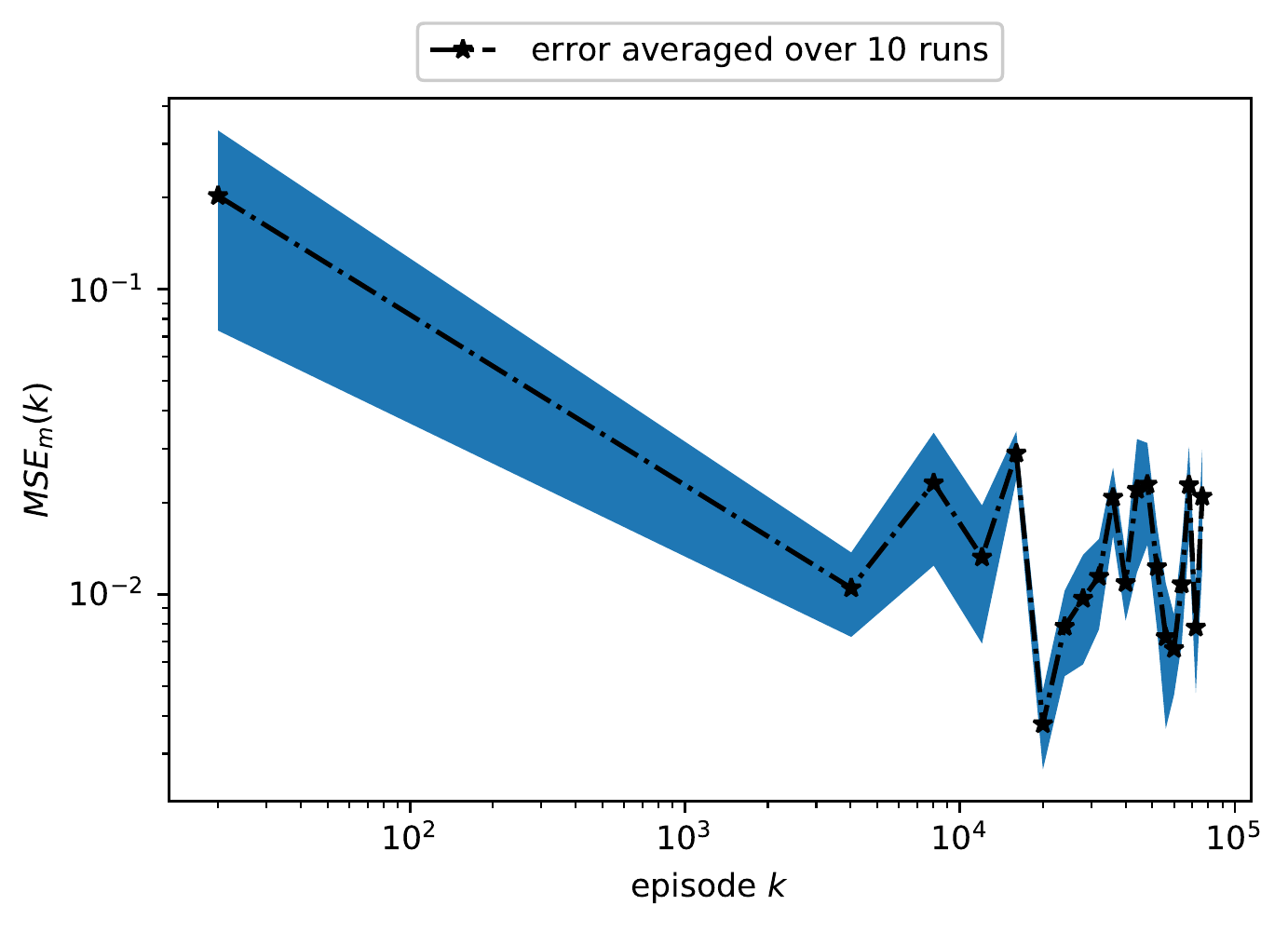}
  \caption{MFC: mean sqared error on $\hat m$ }
  \label{fig:error_mean_MFC}
\end{minipage}
  
\end{figure}

\subsubsection{Empirical analyses of the stopping criteria}

\textbf{Figures \ref{fig:stop_mu_MFG}, \ref{fig:stop_mu_MFC}, \ref{fig:stop_Q_MFG}, \ref{fig:stop_Q_MFC}:  stopping criteria.} The goal of the the U2-MF-QL is to obtain a good approximation of the optimal controls and the ergodic distribution. As presented in algorithm \ref{algo:U2MFQL}, the stopping criteria is based on the analyses of the progresses in learning the optimal $Q$ function and the ergodic distibution. The total variation and the $1,1$-norm between the start and the end of each episode is evaluated for the distribution and the $Q-$table respectively as follows
\begin{equation*}
    \delta(\mu_T^{k-1},\mu_T^{k}) = \sum_{x_i \in \mc{X}} \abs{\mu_T^{k}(x_i)-\mu_T^{k-1}(x_i)}, \quad \quad
    \Vert Q^k - Q^{k-1} \Vert_{1,1} = \sum_{i,j} \abs{Q_{i,j}^k - Q_{i,j}^{k-1}}.
\end{equation*}
The algorithm stops when the increments are not significant anymore based on a threshold given as input. The value of the threshold depends on the user's needs and it may be calibrated by a trial and error approach. The remaining plots show how these quantities decrease as the number of episodes increase.
The $x-$axis represents the number of episodes used for learning. The $y-$axis represents the value of the total variation.
\begin{figure}[H]
\centering
\begin{minipage}{.5\textwidth}
  \centering
  \includegraphics[width=.9\linewidth]{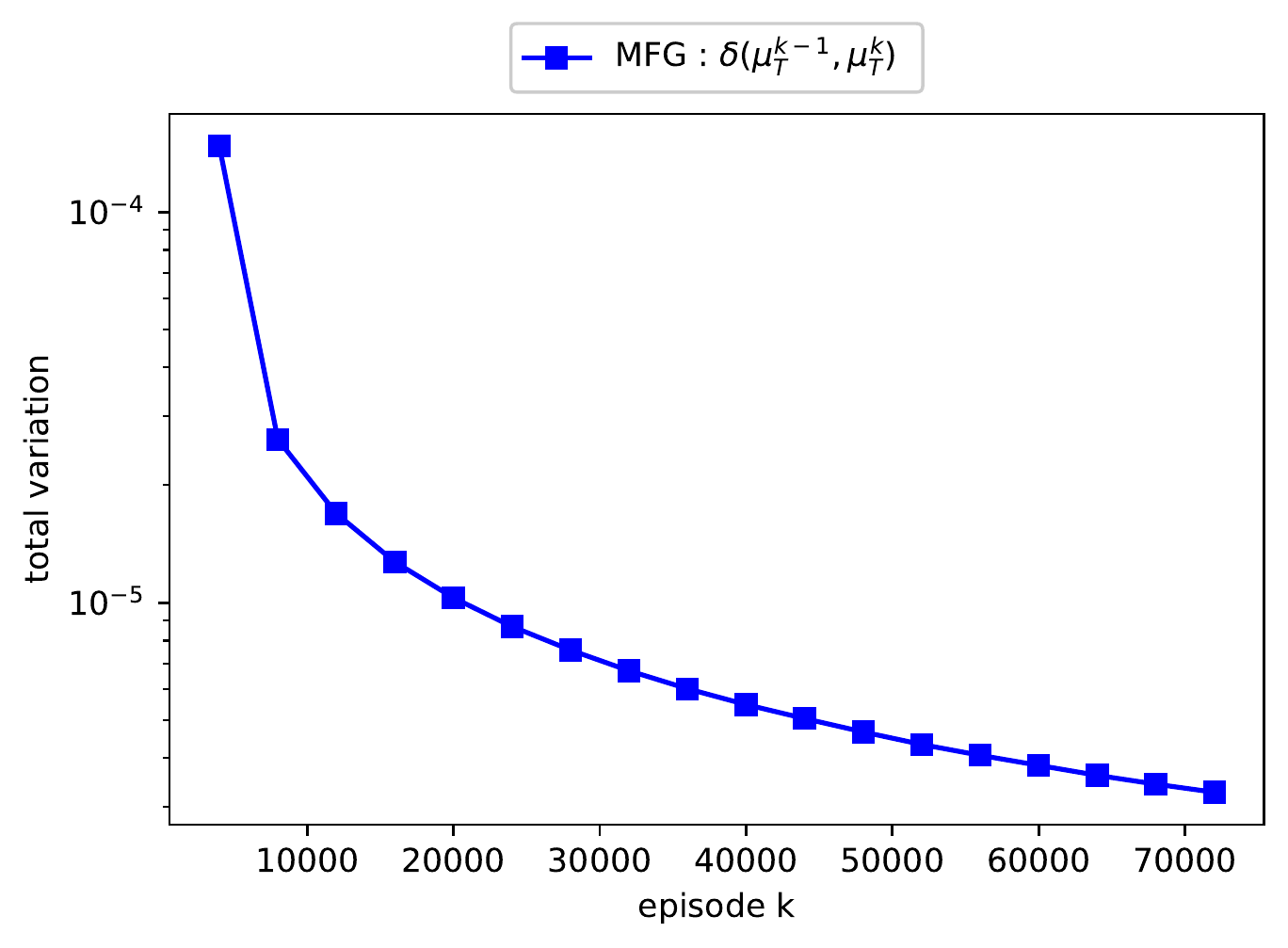}
  \caption{MFG: total variation on $\mu$ }
  \label{fig:stop_mu_MFG}
\end{minipage}%
\begin{minipage}{.5\textwidth}
  \centering
  \includegraphics[width=.9\linewidth]{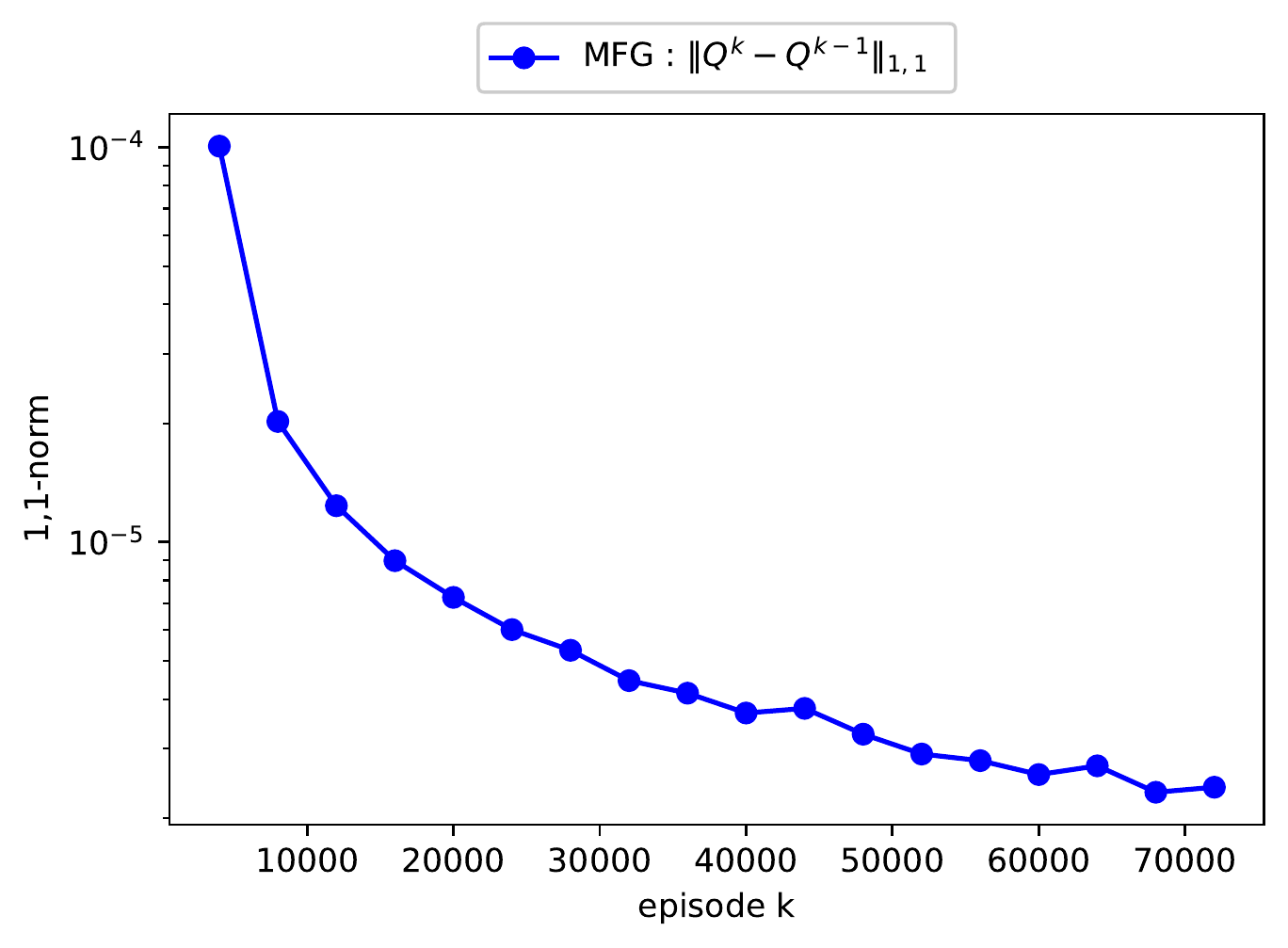}
 \caption{MFG: total variation on $Q$ }
   \label{fig:stop_Q_MFG}
\end{minipage}%
\end{figure}

\begin{figure}[H]
\centering
\begin{minipage}{.5\textwidth}
  \centering
  \includegraphics[width=.9\linewidth]{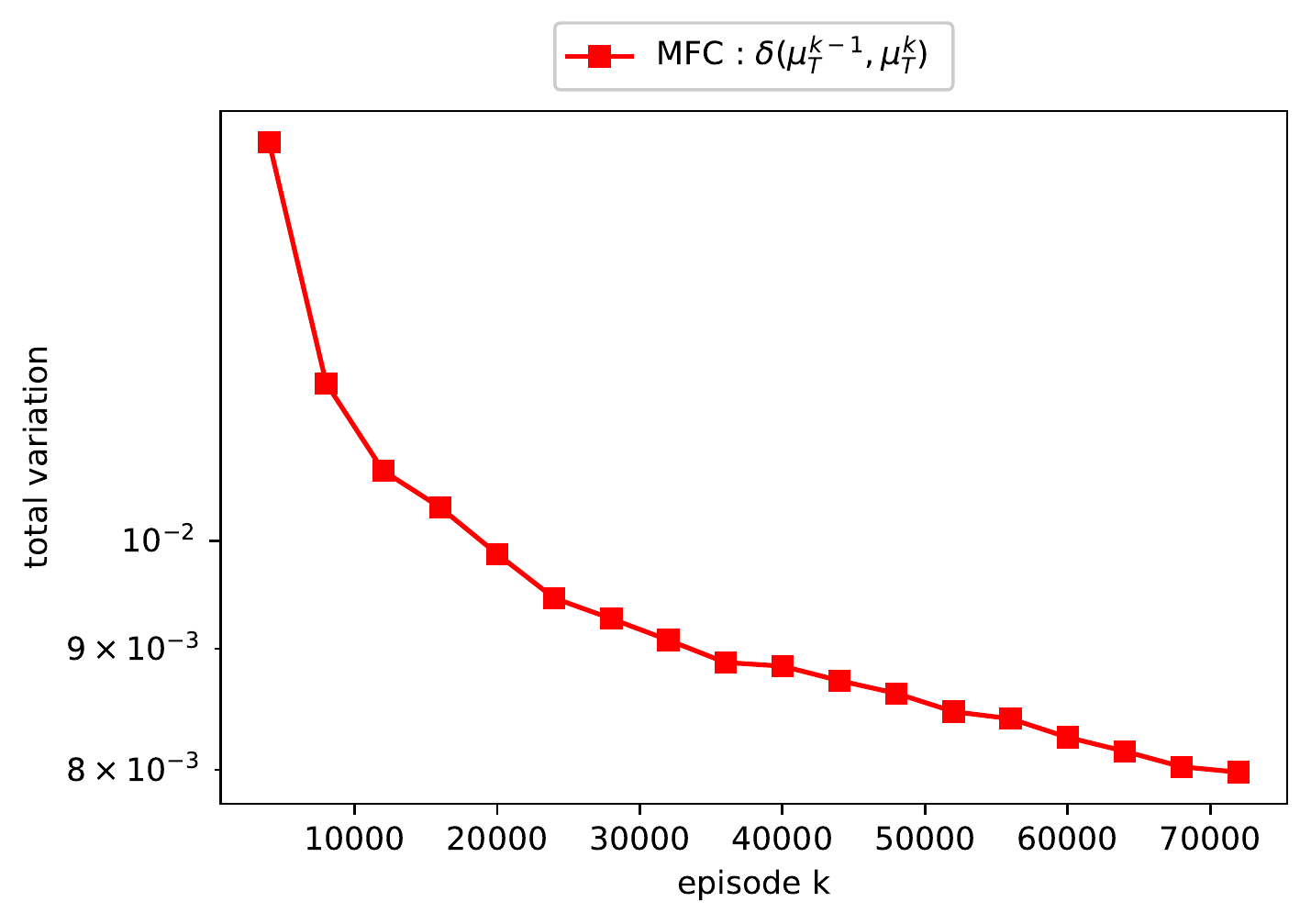}
  \caption{MFC: total variation on $\mu$ }
  \label{fig:stop_mu_MFC}
\end{minipage}%
\begin{minipage}{.5\textwidth}
  \centering
  \includegraphics[width=.9\linewidth]{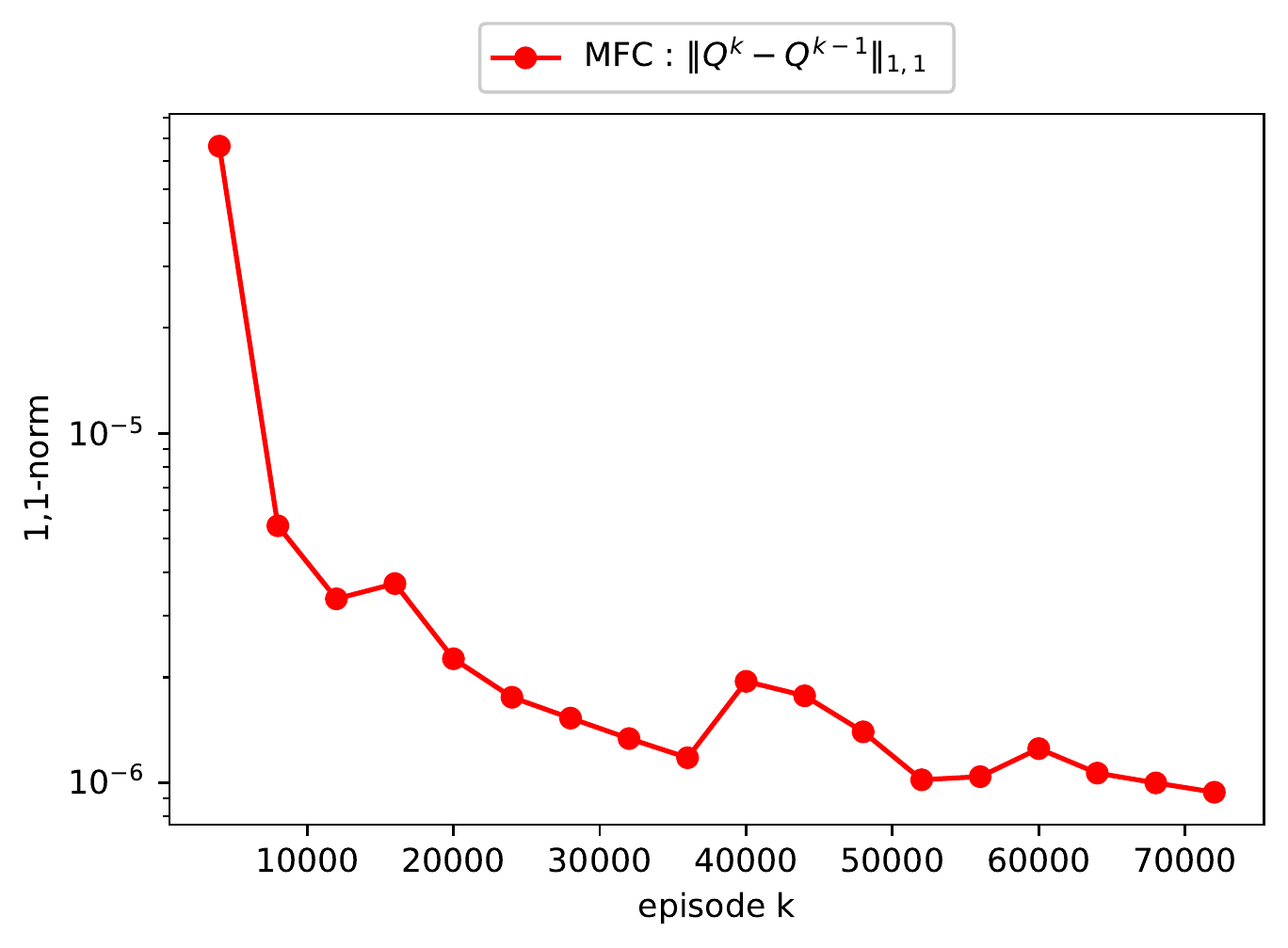}
 \caption{MFC: total variation on $Q$ }
   \label{fig:stop_Q_MFC}
\end{minipage}  
\end{figure}

\bibliographystyle{plain}
\bibliography{references}

\appendix

\section{Theoretical solutions for the benchmark examples}\label{appendix : calculations}

In this appendix the solutions of the following benchmark problems are presented for the linear-quadratic models given by (\ref{eq: benchmark}).
\begin{itemize}
    \item[A.1]  Non-asymptotic Mean Field Game,
    \item[A.2]  Asymptotic Mean Field Game,
    \item[A.3]  Stationary Mean Field Game,
    \item[A.4]  Non-asymptotic Mean Field Control,
    \item[A.5]  Asymptotic Mean Field Control.
    \item[A.6]  Stationary Mean Field Control.
\end{itemize}
In particular, we check that the relations \eqref{controls-MFG} and \eqref{controls-MFC} are satisfied. The explicit formulas for the optimal controls (AMFG and AMFC) are used as benchmarks for our algorithm.

\subsection { Solution for non-asymptotic MFG}  
We present the solution for the following MFG problem
\begin{enumerate}
    
    \item Fix $\boldsymbol{m}=(m_t)_{t\geq 0} \subset \mR$ and solve the stochastic control problem:
    \begin{align*}
    \min_{\boldsymbol{ \alpha}}J^{\boldsymbol{m}}(\boldsymbol{ \alpha})&=\min_{\boldsymbol{ \alpha}} \mathbb{E}\left[\int_0^{\infty} e^{-\beta t}f(X^{\boldsymbol{ \alpha}}_t,\alpha_t,m_t)dt \right]=\\
&=\min_{\boldsymbol{ \alpha}}\E{\int_0^{+\infty}e^{-\beta t }\parentheses{\frac{1}{2}\alpha_t^2 + c_1 \parentheses{ X_t^{\boldsymbol{ \alpha}}- c_2 m_t}^2 + c_3 \parentheses{X_t^{\boldsymbol{ \alpha}}- c_4}^2+c_5 m_t^2}dt } ,    
\\
\text{subject to }&\\
 dX^{\boldsymbol{ \alpha}}_t&=\alpha_t dt +\sigma dW_t, \\
    X^{\boldsymbol{ \alpha}}_0&\sim\mu_0.    
\end{align*}
 
   \item Find the fixed point, $\boldsymbol{\hat m}=(\hat m_t)_{t\geq 0}$, such that $\E{X_t^{\boldsymbol{ \hat \alpha}}}=\hat m_t$ for all $ t\geq 0$.
\end{enumerate}
This problem can be solved by two equivalent approaches: PDE and FBSDEs. Both approaches start by solving the  problem defined by a finite horizon $T$. Then, the solution to the infinite horizon problem is obtained by taking the limit  $T$ goes to infinity. Let $V^{\boldsymbol{m}^T,T}(t,x)$ be the optimal value function for the finite horizon problem conditioned on $X_0=x$, i.e. 
\begin{equation*}
V^{\boldsymbol{m}^T,T}(t,x)=\inf_{\boldsymbol{ \alpha}}J^{\boldsymbol{m},x}(\boldsymbol{ \alpha})=\inf_{\boldsymbol{ \alpha}}\E{\int_t^{T}e^{-\beta s }f(X_s^{\boldsymbol{\alpha}},\alpha_s,m^T_s) ds\Big|X_0^{\boldsymbol{ \alpha}}=x}, \quad V^{\boldsymbol{m}^T,T}(T,x)=0.  
\end{equation*}  where $\boldsymbol{m}^T=\{ m_t^T \}_{0 \leq t \leq T}\subset \mR.$ Let's consider the following ansatz with its derivatives
\begin{equation}
\begin{split}
    V^{\boldsymbol{m}^T,T}(t,x) &= \Gamma_2^T (t) x^2 +  \Gamma_1^T (t) x  + \Gamma_0^T (t), 
    \\
    \partial_t V^{\boldsymbol{m}^T,T}(t,x) &= \dot{\Gamma}_2^T (t) x^2 +  \dot{\Gamma}_1^T (t) x  + \dot{\Gamma}_0^T (t), \\
    \partial_x V^{\boldsymbol{m}^T,T}(t,x) &= 2 \Gamma_2^T (t) x +  \Gamma_1^T (t) , \\
    \partial_{xx}V^{\boldsymbol{m}^T,T}(t,x) &=2 \Gamma_2^T (t), 
\end{split}
\end{equation}
Then, the HJB equation for the value function reads:
\begin{align*}
&\partial_t V^{\boldsymbol{m}^T,T} - \beta V^{\boldsymbol{m}^T,T} + \inf_{\alpha} \{\mathcal{A}^X V^{\boldsymbol{m}^T,T} + f(x,\alpha,m^T)\}\\
&=\partial_t V^{\boldsymbol{m}^T,T} - \beta V^{\boldsymbol{m}^T,T} 
\\
&\qquad+ \inf_{\alpha} \left\{\alpha \partial_x V^{\boldsymbol{m}^T,T} +\frac{1}{2}\sigma^2  \partial_{xx} V^{\boldsymbol{m}^T,T} + \frac{1}{2}\alpha^2 + c_1 (x- c_2 m^T)^2 + c_3 (x - c_4)^2 +c_5 (m^T)^2\right\}\\
&=\partial_t V^{\boldsymbol{m}^T,T} - \beta V^{\boldsymbol{m}^T,T} 
\\
&\qquad+ \left \{ - {\partial_x V^{\boldsymbol{m}^T,T}}^2 +\frac{1}{2}\sigma^2  \partial_{xx}V^{\boldsymbol{m}^T,T} + \frac{1}{2}{\partial_x V^{\boldsymbol{m}^T,T}}^2 + c_1  (x- c_2 m^T)^2+ c_3 (x - c_4 )^2+ c_5 (m^T)^2\right \}\\
&=\partial_t V^{\boldsymbol{m}^T,T} - \beta V^{\boldsymbol{m}^T,T} -  \frac{1}{2}{\partial_x V^{\boldsymbol{m}^T,T}}^2 +\frac{1}{2}\sigma^2  \partial_{xx} V^{\boldsymbol{m}^T,T} + c_1 (x-c_2 m^T)^2 + c_3 (x - c_4)^2 + c_5 (m^T)^2= 0, 
\end{align*}
where in the third line we evaluated the infimum at $\hat\alpha^{T}= -V^{\boldsymbol{m}^T,T}_x$. The following ODEs system is obtained by replacing the ansatz and its derivatives in the HJB equation:
\begin{equation} \label{system_conditions_ODE_MFG}
\begin{cases}
{\dot \Gamma}_2^T -2({{ \Gamma}^T_2})^2 - \beta { \Gamma}^T_2 + c_1 + c_3 =0, \quad  &{ \Gamma}_2^T(T) = 0, \\
{\dot \Gamma}^T_1 = (2  { \Gamma}^T_2 + \beta ) { \Gamma}^T_1 + 2 c_1 c_2 m^T + 2 c_3 c_4, \quad  &{ \Gamma}^T_1(T)=0, \\
{\dot \Gamma}^T_0 = \beta { \Gamma}^T_0 + \frac{1}{2}({{ \Gamma}^T_1})^2 - \sigma^2 { \Gamma}^T_2 -c_3 {c_4}^2  - (c_1 {c_2}^2 +c_5) ({m^T})^2 , \quad  &{ \Gamma}^T_0(T) = 0,\\
\dot m^T = - 2 {\Gamma}^T_2  m^T - { \Gamma}^T_1, \quad  &m^T(0)= \E{\mu_0}=m_0,\\
\end{cases}
\end{equation}
where the last equation is obtained by considering the expectation of $X_t^{\boldsymbol{\alpha}}$ after replacing $\hat\alpha^{T} = -\partial_x V^{\boldsymbol{m}^T,T} = - (\Gamma^T_2 x + \Gamma^T_1)$.
The first equation is a Riccati equation. In particular, the solution $\Gamma^T_2$ converges to $\hat\Gamma_2=\frac{-\beta + \sqrt{\beta^2+8(c_1 + c_3)}}{4}$ as $T$ goes to infinity. The second and fourth ODEs are coupled and they can be written in matrix notation as 

\begin{gather} \label{eqn: non_hom_system}
 \dot{\wideparen{\begin{pmatrix} 
 m^T \\ \Gamma^T_1
 \end{pmatrix}}}
 = \begin{bmatrix}
 - 2 \Gamma^T_2  &  -1  \\
  2 c_1 c_2 & 2 \Gamma^T_2 + \beta   
   \end{bmatrix}
   \begin{pmatrix} 
   m^T \\ \Gamma^T_1
    \end{pmatrix} + \begin{pmatrix} 
   0 \\ 2 c_3 c_4
    \end{pmatrix}, \quad
    \begin{pmatrix} 
   m^T(0) \\ \Gamma^T_1(T)
    \end{pmatrix}
    =
    \begin{pmatrix} 
   m_0 \\ 0
    \end{pmatrix}.
\end{gather}
We start by solving the homogeneous equation, i.e.
\begin{gather}\label{matr_system}
 \dot{\wideparen{\begin{pmatrix} 
 m^T \\ \Gamma_1^T
 \end{pmatrix}}}
 = K_t^T \begin{pmatrix} 
   m^T \\ \Gamma^T_1
    \end{pmatrix} \coloneqq
 \begin{bmatrix}
 - 2 \Gamma^T_2  &  -1  \\
  2 c_1 c_2 & 2 \Gamma^T_2 + \beta   
   \end{bmatrix}
   \begin{pmatrix} 
   m^T \\ \Gamma^T_1
    \end{pmatrix} 
   , \quad
    \begin{pmatrix} 
   m^T(0) \\ \Gamma^T_1(T)
    \end{pmatrix}
    =
    \begin{pmatrix} 
   m_0 \\ 0
    \end{pmatrix}.
\end{gather}

We introduce the propagator $P^T$, i.e.
   \begin{gather} 
 {\begin{pmatrix} 
 m^T \\ \Gamma_1^T
 \end{pmatrix}}
 = P^T_t
    \begin{pmatrix} 
   m^T(0) \\ \Gamma_1^T(0)
    \end{pmatrix}.
\end{gather}
By deriving $\begin{pmatrix} 
 m^T \\ \Gamma_1^T
 \end{pmatrix}$ and expressing the initial conditions in terms of the inverse of $P^T$ and 
$\begin{pmatrix} 
 m^T \\ \Gamma_1^T
 \end{pmatrix}$, we obtain 
  \begin{gather}
 \dot{\wideparen{\begin{pmatrix} 
 m^T \\ \Gamma_1^T
 \end{pmatrix}}}
 = \dot{P^T_t}
    \begin{pmatrix} 
   m^T(0) \\ \Gamma_1^T(0)
    \end{pmatrix}=\dot{P^T_t}
    ({P^T_t})^{-1}\begin{pmatrix} 
   m^T \\ \Gamma_1^T
    \end{pmatrix}.
\end{gather} 
By comparing the last system with \eqref{matr_system}, we obtain 
\begin{equation}
\begin{cases}
\dot{P^T_t} &= K^T_t P^T_t\\
 P^T_0 &= \mbb{I}_2
\end{cases}
\end{equation}
where $\mbb{I}_2$ is the identity matrix in dimension 2. 
The solution is given by $P^T_t=e^{\int_0^t K^T_s ds } \coloneqq e^{L^T_t}.$ In particular, the exponent is equal to
\begin{equation}
L^T_t= \int_0^t K^T_s ds =  \begin{bmatrix}
 - 2  \int_0^t \Gamma_2^T(s) ds &  -t  \\
  2 c_1 c_2 t & 2  \int_0^t \Gamma_2^T(s) ds  + \beta t  
   \end{bmatrix}= \begin{bmatrix}
   g_t^T & d_t \\ b_t & a_t^T
   \end{bmatrix}.
\end{equation}
We evaluate the exponential $P^T(t)= e^{L^T_t}$ by using the Taylor's expansion and diagonalizing the matrix $L^T_t$. The eigenvalues/eigenvectors of $L^T_t$ are given by
\begin{equation}
\lambda^T_{1\backslash 2,t} \coloneqq \frac{a_t^T+g_t^T \pm \sqrt{(a_t^T-g_t^T)^2 + 4b_t d_t}}{2}, \quad v^T_{1,t}\coloneqq \begin{pmatrix} 
  d_t \\ \lambda^T_{1,t} - g_t^T
    \end{pmatrix}, \quad v^T_{2,t}\coloneqq \begin{pmatrix} 
  d_t \\ \lambda^T_{2,t} - g^T_t
    \end{pmatrix}.
\end{equation}
$P_t$ is obtained by
\begin{equation}
\begin{split}
P^T_t &= \begin{pmatrix} 
 p^T_t(1,1) &  p^T_t(1,2) \\
 p^T_t(2,1) &  p^T_t(2,2) 
    \end{pmatrix}\\
    &= e^{L^T_t}= \sum_{k=0}^{\infty}   \begin{bmatrix} 
 v^T_{1,t} & v^T_{2,t}
    \end{bmatrix} \frac{\begin{pmatrix} 
 \lambda^T_{1,t} & 0 \\ 0 &   \lambda^T_{2,t}
    \end{pmatrix}^k} {k! }\begin{bmatrix} 
 v^T_{1,t} & v^T_{2,t}
    \end{bmatrix}^{-1} \coloneqq \\
    & \coloneqq S^T_t \sum_{k=0}^{\infty} \frac{{D^T_t}^k}{k!} ({S^T_t})^{-1}=\\
    &= S^T_t \begin{pmatrix} 
 e^{\lambda^T_{1,t}} & 0 \\ 0 &    e^{\lambda^T_{2,t}}
    \end{pmatrix}({S^T_t})^{-1}=\\
    &=\frac{1}{d_t(\lambda^T_{2,t} - \lambda^T_{1,t})} 
    \begin{pmatrix} 
 d_t e^{\lambda^T_{1,t}}(\lambda^T_{2,t} - g^T_t)+ d_t e^{\lambda^T_{2,t}}(g^T_t-\lambda^T_{1,t}) & d_t^2(e^{\lambda^T_{2,t}}-e^{\lambda^T_{1,t}}) \\ 
 (\lambda^T_{1,t} - g^T_t) (\lambda^T_{2,t} - g^T_t) (e^{\lambda^T_{1,t}}-e^{\lambda^T_{2,t}}) &  
 d_t e^{\lambda^T_{2,t}}(\lambda^T_{2,t} - g^T_t) + d_t e^{\lambda^T_{1,t}}(g^T_t-\lambda^T_{1,t})
    \end{pmatrix}.
    \end{split}
\end{equation}
In order to solve the non homogeneous case, we introduce an extra term $\begin{pmatrix} 
   h_1^T \\ h_2^T
    \end{pmatrix}$, i.e.
 \begin{gather} 
 {\begin{pmatrix} 
 m^T \\ \Gamma_1^T
 \end{pmatrix}}
 = P^T_t
    \begin{pmatrix} 
   h^T_1 \\ h^T_2
    \end{pmatrix}.
\end{gather}

By deriving $ {\begin{pmatrix} 
 m^T \\ \Gamma_1^T
 \end{pmatrix}}$, we obtain

\begin{gather}\label{eqn: var_const}
 \dot{\wideparen{\begin{pmatrix} 
 m^T \\ \Gamma_1^T
 \end{pmatrix}}}
 = \dot{P}^T_t
    \begin{pmatrix} 
   h^T_1 \\ h^T_2
    \end{pmatrix}+P_t^T \dot{\wideparen{\begin{pmatrix} 
 h^T_1 \\ h^T_2
 \end{pmatrix}}}={K_t^T}{P^T_t}
    \begin{pmatrix} 
   h_1^T \\ h_2^T
    \end{pmatrix}+{P^T_t}\dot{\wideparen{\begin{pmatrix} 
 h^T_1 \\ h^T_2
 \end{pmatrix}}}= K_t^T  {\begin{pmatrix} 
 m^T_t \\ \Gamma_1^T
 \end{pmatrix}}+{P^T_t}\dot{\wideparen{\begin{pmatrix} 
 h^T_1 \\ h^T_2
 \end{pmatrix}}}.
\end{gather}
By comparing (\ref{eqn: non_hom_system})  with (\ref{eqn: var_const}), we obtain
\begin{gather}
    \dot{\wideparen{\begin{pmatrix} 
 h^T_1 \\ h^T_2
 \end{pmatrix}}}=(P_t^T)^{-1}\begin{pmatrix} 
 0 \\ 2c_4 c_4
 \end{pmatrix}=\frac{1}{|P_t^T|}\begin{pmatrix} 
 p_t^T(2,2) & -p_t^T(1,2) \\ -p_t^T(2,1) & p_t^T(1,1)
 \end{pmatrix}\begin{pmatrix} 
 0 \\ 2c_3 c_4
 \end{pmatrix}.
\end{gather}
By integration we obtain 
\begin{equation}\label{eqn : h's functions}
\begin{split}
    h_1^T(t)&=h_1^T(0)-2c_3c_4\int_0^t \frac{p_s^T(1,2)}{|P_s^T|}ds,\\
    h_2^T(t)&=h_2^T(0)+2c_3c_4\int_0^t  \frac{p_s^T(1,1)}{|P_s^T|}ds,
\end{split}
\end{equation}
where $h_1^T(0)=m_0$ and $h_2^T(0)=\Gamma_1^T(0)$.

We use the terminal condition $\Gamma_1^T(T)=0$ to obtain an evaluation of $h_2^T(0)=\Gamma_1^T(0)$ in terms of $P^T_T$ and $m_0$, i.e. 
\begin{equation} \label{eq:F_0}
\begin{split}
\Gamma_1^T(T)&=p^T_T(2,1)h^T_1(T) + p^T_T(2,2) h^T_2(T)=0, \\
\Gamma_1^T(T)&=p^T_T(2,1)\parentheses{m_0-2c_3c_4\int_0^T \frac{p_s^T(1,2)}{|P_s^T|}ds} + p^T_T(2,2) \parentheses{\Gamma_1^T(0)+2c_3c_4\int_0^T \frac{p_s^T(1,1)}{|P_s^T|} ds}=0, \\
\Gamma_1^T(0)&= -\frac{p^T_T(2,1)}{p^T_T(2,2)} \parentheses{m_0-2c_3c_4\int_0^T \frac{p_s^T(1,2)}{|P_s^T|}ds} -2c_3c_4\int_0^T \frac{p_s^T(1,1)}{|P_s^T|} ds .
\end{split}
\end{equation}
In order to evaluate the limit of $\Gamma_1^T(0)$ as $T$ goes to infinity, we  analyze the different terms separately. First, we evaluate the following limit:
\begin{equation}
\lim_{T\to \infty}\frac{1}{T}\int_0^T \Gamma_2^T(s) ds =\lim_{T\to \infty} \Gamma_2^T(s_1)= \hat\Gamma_2, \quad s_1 \in [0,T] ,
\end{equation}
where we applied the mean value integral theorem and $\hat\Gamma_2=\frac{-\beta +\sqrt{\beta^2+8(c_1+c_3)}}{4}$ is the limit of the solution of the Riccati equation obtained previously, i.e. $\hat\Gamma_2=\lim_{T\to\infty} \Gamma_2^T(s).$ We recall that 
$$\lambda^T_{2,T}-\lambda^T_{1,T}=\sqrt{(a^T_T-g^T_T)^2+4b^T_T d_T}= T \sqrt{\parentheses{\frac{4}{T}\int_0^T \Gamma^T_2(s)ds + \beta}^2 - 8 c_1 c_2 }>0$$ which goes to infinity as $T$ goes to $\infty$ when the term under square root is well defined.  We observe that 
\begin{equation}
\begin{split}
\hat{g}_t&\coloneqq  \lim_{T\to \infty}g^T_t = \lim_{T\to \infty}- 2  \int_0^t \Gamma_2^T(s) ds =  - 2  \hat\Gamma_2 t \coloneqq g t,\\ 
b_t&=2c_1c_2 t,\\ 
\hat{a}_t&\coloneqq \lim_{T\to \infty}a^T_t = \lim_{T\to \infty} 2  \int_0^t \Gamma_2^T(s) ds + \beta t= 2 \hat\Gamma_2 t   + \beta t ,\\  
d_t &= - t,\\
\hat\lambda_{1\backslash 2,t}&\coloneqq \lim_{T\to \infty}\lambda^T_{1\backslash 2,t}  = \frac{\hat{a}_t+\hat{g}_t \pm \sqrt{(\hat{a}_t-\hat{g}_t)^2 + 4b_t d_t}}{2} = t \frac{\beta \pm \sqrt{(4 \hat\Gamma_2 + \beta)^2-8c_1c_2}}{2}\coloneqq t \lambda_{1\backslash 2} ,  \\
\hat{P}_t&\coloneqq \lim_{T\to \infty}P^T_t = \\
&=\frac{1}{d_t(\hat\lambda_{2,t} - \hat\lambda_{1,t})} 
\begin{pmatrix} 
 d_t e^{\hat\lambda_{1,t}}(\hat\lambda_{2,t} - \hat{g}_t)+ d_t e^{\hat\lambda_{2,t}}(\hat{g}_t-\hat\lambda_{1,t}) & d_t^2(e^{\hat\lambda_{2,t}}-e^{\hat\lambda_{1,t}}) \\ 
 (\hat\lambda_{1,t} - \hat{g}_t) (\hat\lambda_{2,t} - \hat{g}_t) (e^{\hat\lambda_{1,t}}-e^{\hat\lambda_{2,t}}) &  
 d_t e^{\hat\lambda_{2,t}}(\hat\lambda_{2,t} - \hat{g}_t) + d_t e^{\hat\lambda_{1,t}}(\hat{g}_t-\hat\lambda_{1,t})
    \end{pmatrix}.
\end{split}
\end{equation}
To evaluate $\hat\Gamma_1(0)=\lim_{T\to \infty} \Gamma^T_1(0)$, we study  the limit of the remaining terms:

\begin{equation}\label{eqn : limits}
    \begin{split}
        \lim_{T\mapsto \infty}-\frac{p^T_T(2,1)}{p^T_T(2,2)}&=\lim_{T\mapsto \infty}\frac{(\lambda^T_{1,T} - g^T_T) (\lambda^T_{2,T} - g^T_T) (e^{\lambda^T_{2,T}}-e^{\lambda^T_{1,T}})}
{ d_T e^{\lambda^T_{2,T}}(\lambda^T_{2,T} - g^T_T) + d_T e^{\lambda^T_{1,T}}(g^T_T-\lambda^T_{1,T})}=\\
&=\lim_{T\mapsto \infty}\frac{1}{\frac{d_T}{(\lambda^T_{1,T}-g^T_T)(1-e^{\lambda^T_{1,T}-\lambda^T_{2,T}})}+\frac{d_T}{(\lambda^T_{2,T}-g^T_T)(1-e^{\lambda^T_{2,T}-\lambda^T_{1,T}})}}=\\
&= -(\lambda_1 -g)=\\
&=-(\lambda_1 + 2 \hat\Gamma_2),\\
 \lim_{T\mapsto \infty}\int_0^T \frac{p_s^T(1,2)}{|P_s^T|}ds&= \lim_{T\mapsto \infty}\int_0^T \frac{d_s(e^{\lambda^T_{2,s}}-e^{\lambda^T_{1,s}})}{(\lambda^T_{2,s}-\lambda^T_{1,s})(e^{\lambda^T_{1,s}+\lambda^T_{2,s}})} ds = \\
 &=\frac{1}{\lambda_2-\lambda_1}\parentheses{\frac{1}{\lambda_2}-\frac{1}{\lambda_1}}\\
 \lim_{T\mapsto \infty}\int_0^T \frac{p_s^T(1,1)}{|P_s^T|} ds &= \lim_{T\mapsto \infty}  \int_0^T\frac{1}{e^{\lambda^T_{1,s}+\lambda^T_{2,s}}} \parentheses{e^{\lambda^T_{1,s}}\frac{\lambda_{2,s}^T-g_s^T}{\lambda^T_{2,s}-\lambda^T_{1,s}} +e^{\lambda^T_{2,s}} \frac{g_s^T-\lambda_{1,s}^T}{\lambda^T_{2,s}-\lambda^T_{1,s}} }ds=\\
 &=\frac{\lambda_2-g}{\lambda_2(\lambda_2-\lambda_1)}+\frac{g-\lambda_1}{\lambda_1(\lambda_2-\lambda_1)}.
    \end{split}
\end{equation}
Finally, the value of $\hat\Gamma_1(0)$ is given by
\begin{equation}
\hat\Gamma_1(0)  = - (\lambda_1 - g) m_0 -2\frac{c_3c_4}{\lambda_2}.
\end{equation}

Given $\hat\Gamma_1(0)$, we evaluate the limit as $T$ goes to $\infty$ of \eqref{eqn : h's functions}, i.e.

\begin{equation}
\begin{split}
    h_1(t)\coloneqq \lim_{T\mapsto \infty} h_1^T(t)&=m_0- 2c_3c_4 \lim_{T\mapsto \infty} \int_0^t \frac{ p_s^T(1,2)}{|P_s^T|}ds = \\
    &=m_0+ 2 \frac{c_3 c_4}{\lambda_2 - \lambda_1} \parentheses{\frac{1}{\lambda_2}e^{-t \lambda_2}-\frac{1}{\lambda_1}e^{-t \lambda_1}+\frac{1}{\lambda_1}-\frac{1}{\lambda_2}} ,\\
    h_2(t)\coloneqq \lim_{T\mapsto \infty} h_2^T(t)&=\lim_{T\mapsto \infty} \parentheses{\Gamma_1^T(0)+2c_3c_4  \int_0^t \frac{p_s^T(1,1)}{|P_s^T|} ds} =\\
   &= \hat\Gamma_1(0)+ 2 \frac{c_3 c_4}{\lambda_2 - \lambda_1} \parentheses{\frac{\lambda_2-g}{\lambda_2}(1-e^{-t\lambda_2})+\frac{g-\lambda_1}{\lambda_1}(1-e^{-t\lambda_1})}.
\end{split}
\end{equation}

We can conclude that
\begin{equation} %
\begin{split}
 \hat{m}_t&=\lim_{T\to\infty}m^T_t=\\
&= \hat{p}_t(1,1) h_1(t) + \hat{p}_t(1,2)h_2(t)=\\
&= \parentheses{m_0+2 \frac{c_3c_4}{\lambda_2-\lambda_1}\parentheses{\frac{1}{\lambda_1}-\frac{1}{\lambda_2}} }e^{t\lambda_{1}}+2 \frac{c_3c_4}{\lambda_2-\lambda_1}\parentheses{\frac{1}{\lambda_2}-\frac{1}{\lambda_1}},\\
\hat\Gamma_1(t)&=\lim_{T\to\infty}\Gamma_1^T(t)=\\
&= \hat{p}_t(2,1) h_1(t) + \hat{p}_t(2,2)h_2(t)=\\
&=m_0 (g-\lambda_1) e^{t\lambda_{1}}+2\frac{c_3c_4}{\lambda_2-\lambda_1}\parentheses{\frac{\lambda_2-g}{\lambda_2}-\frac{\lambda_1-g}{\lambda_1}}.\\
\end{split}
\end{equation}

Finally, the third ODE in \eqref{system_conditions_ODE_MFG} can be solved by plugging in the solution of the previous ones and integrating. Since our interest is into the evolution of the mean and the control function, we omit these calculations, but we recall that: 
\begin{equation}\label{formula-alpha-hat}
\hat\alpha_t=-(\hat\Gamma_2 x+\hat\Gamma_1(t)),
\quad \hat\Gamma_2=\frac{-\beta + \sqrt{\beta^2+8(c_1 + c_3)}}{4},
\end{equation}
and we observe that
\begin{equation}\label{formula--alpha-hat-infty}
 \lim_{t\to\infty}\hat\alpha_t=-(\hat\Gamma_2 x+\hat\Gamma_1), \quad   \hat\Gamma_1=-\frac{4c_1c_2\hat\Gamma_2}{\lambda_2} 
 =\frac{c_3c_4\hat\Gamma_2}{2(c_1+c_3-c_1c_2)}.
\end{equation}

\subsection { Solution for Asymptotic MFG} 
The asymptotic version of the problem presented above is given by:
 \begin{enumerate}
    	\itemsep -2pt
    	\item Fix $m \in \mR $ and solve the stochastic control problem:
    	\begin{align*}
    	\min_{\boldsymbol{ \alpha}}J^{m}(\boldsymbol{ \alpha})&=\min_{\boldsymbol{ \alpha}} \mathbb{E}\left[\int_0^{\infty} e^{-\beta t}f(X^{\boldsymbol{ \alpha}}_t,\alpha_t,m)dt \right]=\\
    	&=\min_{\boldsymbol{ \alpha}}\E{\int_0^{\infty}e^{-\beta t }\parentheses{\frac{1}{2}\alpha_t^2 + c_1 \parentheses{ X_t^{\boldsymbol{ \alpha}}-c_2 m}^2 + c_3 \parentheses{ X_t^{\boldsymbol{ \alpha}}-c_4}^2 + c_5 m^2}dt},   
    	\\
    	\text{subject to: }&
    	\quad  dX^{\boldsymbol{ \alpha}}_t=\alpha_t dt +\sigma dW_t, \quad X^{\boldsymbol{ \alpha}}_0\sim\mu_0.    
    	\end{align*}
   \item Find the fixed point, $\hat m$, such that $\hat m = \lim_{t \to +\infty} \E{  X^{\hat\alpha,\hat m}_t}$.
\end{enumerate}

Let $V^m(x)$ be the optimal value function given $m \in \mR$ and conditioned on $X_0=x$, i.e.
\begin{equation*}
V^m(x)=\inf_{\boldsymbol{ \alpha}}J^{m,x}(\boldsymbol{ \alpha})=\inf_{\boldsymbol{ \alpha}}\E{\int_0^{+\infty}e^{-\beta t }\parentheses{\frac{1}{2}\alpha_t^2 + c_1 \parentheses{ X_t^{\boldsymbol{ \alpha}}-c_2 m}^2 + c_3 \parentheses{ X_t^{\boldsymbol{ \alpha}}-c_4}^2 + c_5 m^2}\Big|X_0^{\boldsymbol{ \alpha}}=x}.  
\end{equation*}
We consider the following ansatz with its derivatives with respect to $x$:
\begin{align*}
V^m(x)&=\Gamma_2 x^2 + \Gamma_1 x +\Gamma_0, \\ 
\dot V^m(x)&= 2\Gamma_2 x + \Gamma_1, \\
\ddot V^m(x)&=2\Gamma_2. 
\end{align*}
Let's consider the HJB equation
\begin{align*}
&\beta V^m(x) - \inf_{\alpha} \{\mathcal{A}^X V^m(x) + f(x,\alpha,m)\}\\
&=\beta V^m(x) - \inf_{\alpha} \left\{\alpha \dot V(x) +\frac{1}{2}\sigma^2  \ddot V^m(x) + \frac{1}{2}\alpha^2 + c_1 (x- c_2 m)^2 + c_3 (x - c_4)^2 +c_5 m^2\right\}\\
&=\beta V^m(x) - \left \{ - ({\dot V^m})^2(x) +\frac{1}{2}\sigma^2  \ddot V^m(x) + \frac{1}{2}({\dot V^m})^2(x) + c_1  (x- c_2 m)^2+ c_3 (x - c_4 )^2+ c_5 m^2\right \}\\
&=\beta V^m(x)  + \frac{1}{2}({\dot V^m})^2(x) -\frac{1}{2}\sigma^2  \ddot V^m(x) - c_1 (x-c_2 m)^2 - c_3 (x - c_4)^2 - c_5 m^2= 0, 
\end{align*}
where in the third line we evaluated the infimum at $\hat\alpha(x)= -\dot V^m(x)$. Replacing the ansatz and its derivatives in the HJB equation, it follows that
\begin{equation*}
\parentheses{\beta \Gamma_2 + 2 \Gamma_2^2 - c_1 - c_3 }x^2 +(\beta \Gamma_1 +2\Gamma_2\Gamma_1+2c_1c_2 m +2c_3 c_4 )x +\beta \Gamma_0+\frac{1}{2}\Gamma_1^2-\sigma^2 \Gamma_2 -( c_1{c_2}^2+c_5) m^2 - c_3 {c_4}^2=0.
\end{equation*}
An easy computation gives the values 
 \begin{align*}
 \Gamma_2&=\frac{-\beta + \sqrt{\beta^2 +8 (c_1+c_3)}}{4},\\
 \Gamma_1&=- \frac{ 2c_1c_2m+2c_3 c_4}{\beta + 2\Gamma_2},\\
 \Gamma_0&=\frac{ c_5  m^2 + c_3 {c_4}^2+ c_1 {c_2}^2 m^2 +\sigma^2 \Gamma_2 -\frac{1}{2}\Gamma_1^2 }{\beta}.
 \end{align*}
 
 By plugging the control $\hat\alpha(x)=-(2\Gamma_2x+\Gamma_1)$ into the dynamics of $X_t$ and taking the expected value, we obtain an ODE for ${ m_t}$

 \begin{equation} \label{eq: ode_mt_amfg}
     \dot m_t= -(2\Gamma_2 m_t+\Gamma_1).
 \end{equation}
The solution of (\ref{eq: ode_mt_amfg}) is used to derive $m$ as follows

\begin{equation}\label{valueofm}
    \begin{split}
      m &=\lim_{t\mapsto \infty} m_t 
      =\lim_{t\mapsto \infty} -\frac{\Gamma_1}{2\Gamma_2} + \parentheses{m_0 + \frac{\Gamma_1}{\Gamma_2}} e^{-2 \Gamma_2 t}
      =-\frac{\Gamma_1}{2\Gamma_2}
      =\frac{2c_1 c_2 m+2 c_3 c_4}{2 \Gamma_2 (\beta + 2\Gamma_2)},\\
     m &= \frac{c_3 c_4}{\Gamma_2 (\beta + 2\Gamma_2) -c_1 c_2 }
    \end{split}
\end{equation}

To summarize, we derived that $\hat\alpha(x)=-(2\Gamma_2x+\Gamma_1)$ with $\Gamma_2=\hat\Gamma_2$ and $\Gamma_1=\hat\Gamma_1$ obtained in \eqref{formula--alpha-hat-infty}. In other words, we have checked that
\begin{equation*}
  \lim_{t\to\infty}\hat\alpha_t^{MFG}(x) =  \hat\alpha^{AMFG}(x), \quad \forall x, 
\end{equation*}
that is the first part of \eqref{controls-MFG} for this LQ MFG.

    \subsection{Solution for stationary MFG}\label{appendix:SMFG}
 
The only difference with the derivation above in the case of asymptotic MFG is that $m_t$ should be a constant which, from 
\eqref{eq: ode_mt_amfg}, should satisfy $2\Gamma_2 m+\Gamma_1=0$. Therefore, $m$ takes the same value as in \eqref{valueofm}, and we deduce
\begin{equation*}
  \hat\alpha^{SMFG}(x) =  \hat\alpha^{AMFG}(x), \quad \forall x, \end{equation*}
that is the second part of \eqref{controls-MFG} for this LQ MFG.

\subsection { Solution for non-asymptotic MFC}

We present the solution for the following non-asymptotic MFC problem
\begin{align*}
 \min_{\boldsymbol{ \alpha}}J(\boldsymbol{ \alpha})&=\min_{\boldsymbol{ \alpha}} \mathbb{E}\left[\int_0^{\infty} e^{-\beta t}f(X^{\boldsymbol{ \alpha}}_t,\alpha_t,\E{X_t^{\boldsymbol{\alpha}}})dt \right]\\
  &=\min_{\boldsymbol{ \alpha}}\E{\int_0^{+\infty}e^{-\beta t }\parentheses{\frac{1}{2}\alpha_t^2 + c_1 \parentheses{ X_t^{\boldsymbol{ \alpha}}-c_2 \E{X_t^{\boldsymbol{ \alpha}}}}^2 + c_3 \parentheses{ X_t^{\boldsymbol{ \alpha}}-c_4}^2 + c_5 \E{X_t^{\boldsymbol{ \alpha}}}^2}dt},\\
\text{subject to: }&
 \quad dX^{\boldsymbol{ \alpha}}_t=\alpha_t dt +\sigma dW_t ,\quad 
    X^{\boldsymbol{ \alpha}}_0\sim\mu_0.    
\end{align*}
Note that here the mean $\E{X_t^{\boldsymbol{ \alpha}}}$ of the population  changes instantaneously when $\boldsymbol{ \alpha}$ changes.

This problem can be solved by two equivalent approaches: PDE and FBSDEs. Both approaches start by solving the  problem defined by a finite horizon $T$. Then, the solution to the infinite horizon problem is obtained by taking the limit for $T$ goes to infinity. Let $V^T(t,x)$ be the optimal value function for the finite horizon problem conditioned on $X_0=x$, i.e.
\begin{equation*}
V^T(t,x)=\inf_{\boldsymbol{ \alpha}}J^{\boldsymbol{m^\alpha},x}(\boldsymbol{ \alpha})=\inf_{\boldsymbol{ \alpha}}\E{\int_t^{T}e^{-\beta s }f(X_s^{\boldsymbol{\alpha}},\alpha_s,m_s^{\boldsymbol{\alpha}})ds\Big|X_0^{\boldsymbol{ \alpha}}=x}, \quad V^T(T,x)=0.  
\end{equation*}  Let's consider the following ansatz with its derivatives
\begin{equation}
\begin{split}
    V^T(t,x) &= \Gamma_2^T (t) x^2 +  \Gamma_1^T (t) x  + \Gamma_0^T (t), \quad V^T(T,x)=0,\\
    \partial_t V^T(t,x) &= \dot{\Gamma}_2^T (t) x^2 +  \dot{\Gamma}_1^T (t) x  + \dot{\Gamma}_0^T (t), \\
    \partial_x V^T(t,x) &= 2 \Gamma_2^T (t) x +  \Gamma_1^T (t) , \\
    \partial_{xx} V^T(t,x) &=2 \Gamma_2^T (t), 
\end{split}
\end{equation}
Starting by the MFC-HJB equation (4.12) given in \cite{MR3134900}, we extended it to the asymptotic case as follows 
\begin{align*}
&\beta V^T -V_t^T - H\parentheses{t,x,\boldsymbol{\mu},\alpha} - \int_{\mR} \fdv{H}{\mu}\parentheses{t,h,\boldsymbol{\mu},-\partial_x V^T}(x)\mu_t(h)dh=0,
\end{align*}
where   $m_t=\int_{\mR} y \mu_t(dy)$ and $\alpha^*=-\partial_x V^T$. We have:
\begin{align*}
 H\parentheses{t,x,\boldsymbol{\mu},\alpha} &:=\inf_{\alpha}\curlyp{\mathcal{A}^X V^T + f\parentheses{t, x,\alpha,\boldsymbol{\mu}}}\\
 &=\inf_{\alpha}\curlyp{\alpha \partial_x V^T + \frac{1}{2}\sigma^2 \partial_{xx}V^T+\frac{1}{2}\alpha^2 +c_1 (x-c_2 m_t)^2 + c_3 (x-c_4)^2 + c_5 {m_t}^2}\\
 &=-\frac{1}{2}(\partial_x V^T)^2+ \frac{1}{2}\sigma^2 \partial_{xx}V^T+c_1 (x-c_2 m_t)^2 + c_3 (x-c_4)^2 + c_5 {m_t}^2,
 \end{align*}
 \begin{align*}
\fdv{H\parentheses{t,h,\boldsymbol{\mu},\alpha}}{\mu} (x)&=\fdv{}{\mu}\parentheses{c_1 (h-c_2 m_t)^2 + c_5 {m_t}^2}(x)\\
&=\fdv{}{\mu}\parentheses{c_1 \parentheses{h- c_2 \int_{\mR} y \mu_t(dy)}^2 +c_5 \parentheses{\int_{\mR} y \mu_t(dy)}^2}(x)\\
&=-2c_1 c_2 x\parentheses{ h-c_2\int_{\mR} y \mu_t(dy))} +2c_5x\int_{\mR} y \mu_t(dy)\\
& =- 2c_1 c_2 x(h -c_2 m_t) +2c_5xm_t,
\end{align*}
\begin{align*}
&{ \int_{\mR} \fdv{H}{\mu}\parentheses{t,h,\boldsymbol{\mu},-\partial_x V^T}(x)\mu_t(h)dh}= - 2c_1 c_2 x(m_t - c_2 m_t) +2c_5xm_t,
\end{align*}
and finally
\begin{align*}
	&\beta V^T -\partial_t V^T  +\frac{1}{2}(\partial_x ^T)^2- \frac{1}{2}\sigma^2 \partial_{xx} V^T- c_1 (x- c_2 m_t)^2 
	\\
	&\qquad - c_3 (x- c_4)^2 - c_5 {m_t}^2 + 2 c_1 c_2 x(m_t - c_2 m_t) - 2c_5xm_t=0 .
\end{align*}
The following system of ODEs is obtained by replacing the ansatz and its derivatives in the MFC-HJB:
\begin{equation} \label{system_conditions_ODE_MFC}
\begin{cases}
{\dot \Gamma}_2^T -2({{ \Gamma}^T_2})^2 - \beta { \Gamma}^T_2 + c_1 + c_3 =0, \quad  &{ \Gamma}_2^T(T) = 0, \\
{\dot \Gamma}^T_1 = (2  { \Gamma}^T_2 + \beta ) { \Gamma}^T_1 + (2 c_1 c_2 ( 2 - c_2) -2c_5)m_t^T + 2 c_3 c_4, \quad  &{ \Gamma}^T_1(T)=0, \\
{\dot \Gamma}^T_0 = \beta { \Gamma}^T_0 + \frac{1}{2}({{ \Gamma}^T_1})^2 - \sigma^2 { \Gamma}^T_2 -c_3 {c_4}^2  - (c_1 {c_2}^2 +c_5) ({m^T_t})^2 , \quad  &{ \Gamma}^T_0(T) = 0,\\
\dot{m}_t^T = - 2 {\Gamma}^T_2  m^T - { \Gamma}^T_1, \quad  &m^T(0)= \E{X^{\boldsymbol{\alpha}}_0}=m_0,\\
\end{cases}
\end{equation}
where the last equation is obtained by considering the expectation of $X_t^{\boldsymbol{\alpha}}$ after replacing $\alpha^*(x) = -\partial_x V^T(x) = - (\Gamma^T_2 x + \Gamma^T_1)$.
The first equation is a Riccati equation. In particular, the solution $\Gamma^T_2$ converges to $\Gamma^*_2=\frac{-\beta + \sqrt{\beta^2+8(c_1 + c_3)}}{4}$ as $T$ goes to infinity. The second and fourth ODEs are coupled and they can be written in matrix notation as 

\begin{gather} %
 \dot{\wideparen{\begin{pmatrix} 
 m^T \\ \Gamma^T_1
 \end{pmatrix}}}
 = \begin{bmatrix}
 - 2 \Gamma^T_2  &  -1  \\
  (2 c_1 c_2 ( 2 - c_2) -2c_5) & 2 \Gamma^T_2 + \beta   
   \end{bmatrix}
   \begin{pmatrix} 
   m^T \\ \Gamma^T_1
    \end{pmatrix} + \begin{pmatrix} 
   0 \\ 2 c_3 c_4
    \end{pmatrix}, \quad
    \begin{pmatrix} 
   m^T(0) \\ \Gamma^T_1(T)
    \end{pmatrix}
    =
    \begin{pmatrix} 
   m_0 \\ 0
    \end{pmatrix}.
\end{gather}

By similar calculations to the non-asymptotic MFG case, the following solutions can be obtained
\begin{equation} %
\begin{split}
 m_t^*&=\lim_{T\to\infty}m^T_t
= p^*_t(1,1) h_1(t) + p^*_t(1,2)h_2(t)\\
&= \parentheses{m_0+2 \frac{c_3c_4}{\lambda_2-\lambda_1}\parentheses{\frac{1}{\lambda_1}-\frac{1}{\lambda_2}} }e^{t\lambda_{1}}+2 \frac{c_3c_4}{\lambda_2-\lambda_1}\parentheses{\frac{1}{\lambda_2}-\frac{1}{\lambda_1}},\\
\Gamma_1^*(t)&=\lim_{T\to\infty}\Gamma_1^T(t)
= p^*_t(2,1) h_1(t) + p^*_t(2,2)h_2(t)\\
&=m_0 (g-\lambda_1) e^{t\lambda_{1}}+2\frac{c_3c_4}{\lambda_2-\lambda_1}\parentheses{\frac{\lambda_2-g}{\lambda_2}-\frac{\lambda_1-g}{\lambda_1}},\\
\end{split}
\end{equation}
where

\begin{equation}
\begin{split}
g& \coloneqq  - 2  \Gamma_2^* ,\\ 
b&\coloneqq2(c_1c_2 (2-c_2) -  c_5) ,\\ 
a&\coloneqq 2 \Gamma_2^*    + \beta  ,\\  
d &\coloneqq - 1,\\
\lambda_{1\backslash 2}&\coloneqq  \frac{a+g \pm \sqrt{(a-g)^2 + 4b d}}{2} = t \frac{\beta \pm \sqrt{(4 \Gamma_2^* + \beta)^2-8(c_1c_2 (2-c_2) -  c_5)}}{2} .  \\
\end{split}
\end{equation}

As in the MFG case, the third ODE in (\ref{system_conditions_ODE_MFC}) can be solved by plugging in the solution of the previous ones and integrating. Since our interest is into the evolution of the mean and the control function, we omit the calculation for this ODE.

\subsection { Solution for Asymptotic MFC} 
The asymptotic version of the problem presented above is given by:
\begin{align*}
 \min_{\boldsymbol{ \alpha}}J(\boldsymbol{ \alpha})&=\inf_{\boldsymbol{ \alpha}} \mathbb{E}\left[\int_0^{\infty} e^{-\beta t}f(X^{\boldsymbol{ \alpha}}_t,\alpha_t,m^{\boldsymbol{\alpha}})dt \right]\\
  &=\inf_{\boldsymbol{ \alpha}}\E{\int_0^{+\infty}e^{-\beta t }\parentheses{\frac{1}{2}\alpha_t^2 + c_1  \parentheses{ X_t^{\boldsymbol{ \alpha}}-c_2 m^{\boldsymbol{ \alpha}}}^2 + c_3 \parentheses{ X_t^{\boldsymbol{ \alpha}}- c_4}^2 +c_5 (m^{\boldsymbol{ \alpha}})^2}dt},\\
\text{subject to: }&
 \quad dX^{\boldsymbol{ \alpha}}_t=\alpha_t dt +\sigma dW_t ,\quad 
    X^{\boldsymbol{ \alpha}}_0\sim\mu_0,    
\end{align*}
where $m^{\boldsymbol{ \alpha}} = \lim_{t \to +\infty} \E{X^{\alpha}_t}.$ \\
Let $V(x)$ be the optimal value function conditioned on $X_0=x$, i.e.
\begin{equation*}
V(x)=\inf_{\boldsymbol{ \alpha}}J^{x}(\boldsymbol{ \alpha})=\inf_{\boldsymbol{ \alpha}}\E{\int_0^{+\infty}e^{-\beta t }\parentheses{\frac{1}{2}\alpha_t^2 + c_1 \parentheses{ X_t^{\boldsymbol{ \alpha}}-c_2 m^{\boldsymbol{ \alpha}}}^2 + c_3 \parentheses{ X_t^{\boldsymbol{ \alpha}}-c_4}^2 + c_5 (m^{\boldsymbol{ \alpha}})^2}dt\Big|X_0^{\boldsymbol{ \alpha}}=x}.  
\end{equation*}
We consider the following ansatz with its derivative
\begin{align*}
V(x)&=\Gamma_2 x^2 + \Gamma_1 x + \Gamma_0, \\ 
\dot V(x)&= 2\Gamma_2 x + \Gamma_1, \\
\ddot V(x)&=2\Gamma_2. \\
\end{align*}
Starting by the MFC-HJB equation (4.12) given in \cite{MR3134900}, we extended it to the asymptotic case as follows 
\begin{align*}
&\beta V(x) - H\parentheses{x,\mu^{\boldsymbol{ \alpha}},\alpha} - \int_{\mR} \fdv{H}{\mu}\parentheses{h,\mu^{\boldsymbol{ \alpha}},-\dot V(h)}(x)\mu^{\boldsymbol{ \alpha}}(h)dh=0,
\end{align*}
where  $m^{\boldsymbol{ \alpha}}=\int_{\mR} y \mu^{\boldsymbol{ \alpha}}(dy)$. We have:
\begin{align*}
 H\parentheses{x,\mu^{\boldsymbol{ \alpha}},\alpha} &:=\inf_{\alpha}\curlyp{\mathcal{A}^X V(x) + f\parentheses{ x,\alpha,\mu^{\boldsymbol{ \alpha}}}}\\
 &=\inf_{\alpha}\curlyp{\alpha \dot V(x) + \frac{1}{2}\sigma^2 \ddot V(x)+\frac{1}{2}\alpha^2 +c_1 (x-c_2 m^{\boldsymbol{ \alpha}})^2 + c_3 (x-c_4)^2 + c_5 (m^{\boldsymbol{ \alpha}})^2}\\
 &=-\frac{1}{2}\dot V(x)^2+ \frac{1}{2}\sigma^2 \ddot V(x)+c_1 (x-c_2 m^{\boldsymbol{ \alpha}})^2 + c_3 (x-c_4)^2 + c_5 (m^{\boldsymbol{ \alpha}})^2,
 \end{align*}
 \begin{align*}
\fdv{H\parentheses{h,\mu^{\boldsymbol{ \alpha}},\alpha}}{\mu} (x)&=\fdv{}{\mu}\parentheses{c_1 (h-c_2 m^{\boldsymbol{ \alpha}})^2 + c_5 (m^{\boldsymbol{ \alpha}})^2}(x)\\
&=\fdv{}{\mu}\parentheses{c_1 \parentheses{h- c_2 \int_{\mR} y \mu^{\boldsymbol{ \alpha}}(dy)}^2 +c_5 \parentheses{\int_{\mR} y \mu^{\boldsymbol{ \alpha}}(dy)}^2}(x)\\
&=-2c_1 c_2 x\parentheses{ h-c_2\int_{\mR} y \mu^{\boldsymbol{ \alpha}}(dy))} +2c_5x\int_{\mR} y \mu^{\boldsymbol{ \alpha}}(dy)
 =- 2c_1 c_2 x(h -c_2 m^{\boldsymbol{ \alpha}}) +2c_5xm^{\boldsymbol{ \alpha}},
 \end{align*}
 \begin{align*}
&{ \int_{\mR} \fdv{H}{\mu}\parentheses{h,\mu^{\boldsymbol{ \alpha}},-\dot V(h)}(x)\mu^{\boldsymbol{ \alpha}}(h)dh}= - 2c_1 c_2 x(m^{\boldsymbol{ \alpha}} - c_2 m^{\boldsymbol{ \alpha}}) +2c_5xm^{\boldsymbol{ \alpha}},
\end{align*}
and finally the HJB equation becomes:
\begin{align*}
&\beta V(x) +\frac{1}{2}\dot V(x)^2- \frac{1}{2}\sigma^2 \ddot V(x)- c_1 (x- c_2 m^{\boldsymbol{ \alpha}})^2 - c_3 (x- c_4)^2 - c_5 (m^{\boldsymbol{ \alpha}})^2 + 2 c_1 c_2 x(m^{\boldsymbol{ \alpha}} - c_2 m^{\boldsymbol{ \alpha}}) - 2c_5xm^{\boldsymbol{ \alpha}}=0 .
\end{align*}
A system of ODEs is obtained by replacing the ansatz and its derivatives in the MFC-HJB and cancelling terms in $x^2$, and $x$ and constant:
\begin{equation*}
\begin{split}
\parentheses{\beta \Gamma_2 + 2 \Gamma_2^2 - c_1 - c_3 }x^2 &+\parentheses{\beta \Gamma_1 +2\Gamma_2\Gamma_1+2 c_1 c_ 2 m^{\boldsymbol{ \alpha}} (2-c_2) +2c_3 c_4 - 2 c_5 m^{\boldsymbol{ \alpha}}}x \\
&+\beta \Gamma_0+\frac{1}{2}\Gamma_1^2-\sigma^2 \Gamma_2 -( c_1 {c_2}^2+c_5) (m^{\boldsymbol{ \alpha}})^2 - c_3 {c_4}^2=0.
\end{split}
\end{equation*}
An easy computation gives the values 
 \begin{align*}
 \Gamma_2&=\frac{-\beta + \sqrt{\beta^2 +8 (c_1+c_3)}}{4},\\
 \Gamma_1&= \frac{2 c_5 m^{\boldsymbol{ \alpha}} -2c_1 c_2m^{\boldsymbol{ \alpha}}(2-c_2)-2c_3 c_4}{\beta + 2\Gamma_2},\\
 \Gamma_0&=\frac{  c_5 (m^{\boldsymbol{ \alpha}})^2 + c_3 {c_4}^2+ c_1 {c_2}^2 (m^{\boldsymbol{ \alpha}})^2 +\sigma^2 \Gamma_2 -\frac{1}{2}\Gamma_1^2 }{\beta}.
 \end{align*}
 
 By plugging the control $\alpha^*(x)=-(2\Gamma_2x+\Gamma_1)$ into the dynamics of $X^{\boldsymbol{\alpha}}_t$ and taking the expected value, we obtain an ODE for $m^{\boldsymbol{ \alpha}}_t$
 
 \begin{equation} \label{eq: ode_mt_amfc}
     \dot m_t^{\boldsymbol{ \alpha}}= -(2\Gamma_2 m_t^{\boldsymbol{ \alpha}}+\Gamma_1).
 \end{equation}

The solution of (\ref{eq: ode_mt_amfc}) is used to derive $m$ as follows

\begin{equation}\label{valueofmalpha}
    \begin{split}
      m^{\boldsymbol{ \alpha}} &=\lim_{t\mapsto \infty} m_t^{\boldsymbol{ \alpha}} 
       =\lim_{t\mapsto \infty} \left(-\frac{\Gamma_1}{2\Gamma_2} + \parentheses{m_0 + \frac{\Gamma_1}{\Gamma_2}} e^{-2 \Gamma_2 t}\right)\\
      &=-\frac{\Gamma_1}{2\Gamma_2}
      =-\frac{2c_5 m^{\boldsymbol{ \alpha}} -2c_1 c_2m^{\boldsymbol{ \alpha}}(2-c_2)-2c_3 c_4}{2 \Gamma_2 (\beta + 2\Gamma_2)}\\
     m^{\boldsymbol{ \alpha}} &= \frac{c_3 c_4}{\Gamma_2 (\beta + 2\Gamma_2)+ c_5 -c_1 c_2 (2-c_2)}
    \end{split}
\end{equation}
We remark that the values of $m_t^{\boldsymbol{ \alpha}}$ and $\Gamma_1(t)$ obtained in the non-asymptotic case converge to $m^{\alpha}$ and $\Gamma_1$ respectively as $t$ goes to $\infty$. Therefore, we have obtained that
\begin{equation*}
  \lim_{t\to\infty}\alpha_t^{*MFC}(x) =  \alpha^{*AMFG}(x), \quad \forall x, 
\end{equation*}
that is the first part of \eqref{controls-MFC} for this LQ MFC problem.

 \subsection{Solution for stationary MFC}\label{appendix:SMFC}

The only difference with the derivation above in the case of asymptotic MFC is that $m^\alpha_t$ should be a constant which, from 
\eqref{eq: ode_mt_amfc}, should satisfy $2\Gamma_2 m^\alpha+\Gamma_1=0$. Therefore, $m^\alpha$ takes the same value as in \eqref{valueofmalpha}, and we deduce
\begin{equation*}
  \alpha^{*SMFG}(x) =  \alpha^{*AMFG}(x), \quad \forall x, \end{equation*}
that is the second part of \eqref{controls-MFC} for this LQ MFC problem .

\clearpage
\section{Lipschitz property of the 2 scale  operators}\label{appendix : borkar_assumptions}

\subsection{Generic setting}\label{appendix:Lip}

We modify the original operators using the softmin operator on $\mathbb{R}^{|\mathcal{A}|}$ defined as:
$$
    \softmin(z) = \left(\frac{e^{-z_i}}{\sum_j e^{-z_j}}\right)_{i=1,\dots,|\mathcal{A}|} \in \Delta^{|\mathcal{A}|}, \qquad z \in \mathbb{R}^{|\mathcal{A}|}.
$$
Intuitively, it gives a probability distribution on the indices $i=1,\dots,|\mathcal{A}|$ which has higher values on indices whose corresponding values are closer to be a minimum. In particular, the elements of $\min\{i=1,\dots,|\mathcal{A}| : z_i = \argmin_j z_j\}$ have equal weight and this weight is the largest among $\left(\frac{e^{-z_i}}{\sum_j e^{-z_j}}\right)_{i=1,\dots,|\mathcal{A}|}$. We recall that the function $\softmin$ is Lipschitz continuous for the $2$-norm. Denoting by $L_s$ its Lipschitz constant, it means that
$$
    \|\softmin(z) - \softmin(z')\|_2
    \le L_s \|z - z'\|_2, \qquad z, z' \in \mathbb{R}^{|\mathcal{A}|}.
$$
Moreover, since $|\mathcal{A}|$ is finite, all the norms on $\mathbb{R}^{|\mathcal{A}|}$ are equivalent so there exists a positive constant $c_{2,\infty}$ such that 
$$
    \|\softmin(z) - \softmin(z')\|_\infty
    \le L_s c_{2,\infty} \|z - z'\|_\infty, \qquad z, z' \in \mathbb{R}^{|\mathcal{A}|}.
$$

To alleviate the notation, we will write $Q(x) := (Q(x,a))_{a \in \mathcal{A}}$ for any $Q \in \RR^{|\mathcal{X}| \times |\mathcal{A}|}$. We also introduce a more general version $\underline{p}$ of the transition kernel $p$, which can take as an input a probability over actions instead of a single action: for $x,x' \in \mathcal{X}, \nu \in \Delta^{|\mathcal{A}|}, \mu \in \Delta^{|\mathcal{X}|}$, 
$$
    \underline{p}(x'|x,\nu,\mu) = \sum_{a} \nu(a) p(x'|x,a,\mu).
$$
Intuitively, this is the probability for a agent at $x$ to move to $x'$ when the population distribution is $\mu$ and the agent picks a random action following the distribution $\nu$.

We now consider the following iterative procedure, which is a slight modification of~\eqref{eq1:2scale-mu-k}--\eqref{eq1:2scale-Q-k}. Here again, both variables ($Q$ and $\mu$) are updated at each iteration but with different rates.  Starting from an initial guess $(Q_0, \mu_0) \in \RR^{|\mathcal{X}| \times |\mathcal{A}|} \times \Delta^{|\mathcal{X}|}$, define iteratively for $k=0,1,\dots$:

\begin{subequations}%
     \begin{empheq}[left=\empheqlbrace]{align}
    \label{eq:2scale-mu-k}
    \mu_{k+1} &= \mu_{k} + \rho_{k}^\mu \underline{\mathcal{P}}(Q_{k}, \mu_{k}),
    \\
     \label{eq:2scale-Q-k}
    Q_{k+1} &= Q_{k} + \rho_{k}^Q \mathcal{T}(Q_{k}, \mu_{k}),
    \end{empheq}
\end{subequations}

where 
$$
\begin{cases}
    \mathcal{T}(Q, \mu)(x,a) = f(x, a, \mu) + \gamma \sum_{x'} p(x' | x,a,\mu) \min_{a'}Q(x',a') - Q(x,a), \qquad (x,a) \in \mathcal{X} \times \mathcal{A},
    \\
    \underline{\mathcal{P}}(Q, \mu)(x) = (\mu \underline{P}^{Q,\mu})(x) - \mu(x), \qquad x \in \mathcal{X}, 
\end{cases}
$$
with 
$$
    \underline{P}^{Q,\mu}(x, x') = \underline{p}(x' | x, \softmin Q(x), \mu), 
    \qquad \hbox{ and } 
    \qquad
    (\mu \underline{P}^{Q,\mu})(x) = \sum_{x_0} \mu(x_0) \underline{P}^{Q,\mu}(x_0,x),
$$
is the transition matrix when the population distribution is $\mu$ and the agent uses an approximately optimal randomized control according to the soft-min of $Q$.  

\begin{lemma}
    Assume that $f$ is Lipschitz continuous with respect to $\mu$ and that $\underline{p}$ is Lipschitz continuous with respect to $\nu$ and $\mu$. Then 
    \begin{itemize}
        \item the operator $\mathcal{T}$ is Lipschitz continuous w.r.t. $\mu$ (with a Lipschitz constant possibly depending on $\|Q\|_\infty)$, and Lipschitz continuous in $Q$ (uniformly in $\mu$);
        \item the operator $\underline{\mathcal{P}}$ is Lipschitz continuous in both variables.
    \end{itemize}
    If $p$ is independent of $\mu$, then both $\mathcal{T}$ and $\underline{\mathcal{P}}$ are Lipschitz continuous. 
\end{lemma}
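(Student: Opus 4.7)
The plan is to decompose each operator into constituent terms and apply the Lipschitz hypotheses on $f$ and $\underline{p}$, the Lipschitz property of $\softmin$ recalled above, and the elementary inequality $|\min_{a'}Q(x',a') - \min_{a'}Q'(x',a')| \le \|Q-Q'\|_\infty$ that holds for any two real-valued functions on the finite set $\mathcal{A}$. Boundedness of $\sum_{x'} p(x'|x,a,\mu) = 1$ and of $\mu \in \Delta^{|\mathcal{X}|}$ then let me upgrade pointwise estimates to norm estimates, using the $\sup$-norm on $Q$ and total variation (equivalently the $1$-norm) on $\Delta^{|\mathcal{X}|}$.

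For $\mathcal{T}$, I would fix $(x,a)$, add and subtract $\gamma \sum_{x'} p(x'|x,a,\mu) \min_{a'} Q'(x',a')$, and write
\begin{align*}
\mathcal{T}(Q,\mu)(x,a) - \mathcal{T}(Q',\mu')(x,a)
&= [f(x,a,\mu) - f(x,a,\mu')] - [Q(x,a) - Q'(x,a)] \\
&\quad + \gamma \sum_{x'} p(x'|x,a,\mu) \bigl[\min_{a'} Q(x',a') - \min_{a'} Q'(x',a')\bigr] \\
&\quad + \gamma \sum_{x'} [p(x'|x,a,\mu) - p(x'|x,a,\mu')] \min_{a'} Q'(x',a').
\end{align*}
Fixing $\mu = \mu'$ kills two terms and leaves something bounded by $(1+\gamma)\|Q-Q'\|_\infty$ by the $\min$ inequality and $\sum_{x'} p = 1$, giving Lipschitz continuity in $Q$ uniformly in $\mu$. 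Fixing $Q = Q'$, the first bracket is bounded by $L_f \|\mu-\mu'\|$, and the last term by $\gamma \|Q\|_\infty \sum_{x'} |p(x'|x,a,\mu) - p(x'|x,a,\mu')|$, which in turn is controlled via Lipschitz continuity of $\underline{p}$ in $\mu$ (specialize $\nu$ to the Dirac mass $\delta_a$). The appearance of $\|Q\|_\infty$ here is exactly what forces the Lipschitz constant to depend on $\|Q\|_\infty$, as stated. When $p$ does not depend on $\mu$, this last cross term vanishes and $\mathcal{T}$ becomes globally Lipschitz in $(Q,\mu)$.

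For $\underline{\mathcal{P}}$, I telescope similarly:
\begin{align*}
\underline{\mathcal{P}}(Q,\mu)(x) - \underline{\mathcal{P}}(Q',\mu')(x)
&= \sum_{x_0} \mu(x_0) \bigl[\underline{p}(x|x_0,\softmin Q(x_0),\mu) - \underline{p}(x|x_0,\softmin Q'(x_0),\mu)\bigr] \\
&\quad + \sum_{x_0} \mu(x_0) \bigl[\underline{p}(x|x_0,\softmin Q'(x_0),\mu) - \underline{p}(x|x_0,\softmin Q'(x_0),\mu')\bigr] \\
&\quad + \sum_{x_0} [\mu(x_0) - \mu'(x_0)] \underline{p}(x|x_0,\softmin Q'(x_0),\mu') - [\mu(x) - \mu'(x)].
\end{align*}
The first line is bounded via Lipschitz continuity of $\underline{p}$ in $\nu$ composed with Lipschitz continuity of $\softmin$, yielding a constant multiple of $\|Q-Q'\|_\infty$ uniformly in $\mu$. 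The second line is controlled by Lipschitz continuity of $\underline{p}$ in $\mu$. The third uses $\underline{p} \in [0,1]$ and reduces to a constant times $\|\mu - \mu'\|_1$, and the fourth is trivial. The key point is that $\softmin Q(x_0)$ lives in the compact simplex $\Delta^{|\mathcal{A}|}$ independently of $\|Q\|_\infty$, so no factor of $\|Q\|_\infty$ appears and $\underline{\mathcal{P}}$ is globally Lipschitz in both arguments.

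The main conceptual obstacle, and the reason for the asymmetric statement, is precisely this contrast: in $\mathcal{T}$ the hard $\min_{a'} Q(x',a')$ is unbounded in $Q$ and couples multiplicatively with the $\mu$-sensitivity of $p$, forcing the $\|Q\|_\infty$-dependent constant; in $\underline{\mathcal{P}}$ the regularization via $\softmin$ converts that unbounded minimum into a bounded probability distribution on actions, neutralizing the coupling. This also explains cleanly why eliminating the $\mu$-dependence of $p$ is the one structural assumption that restores full Lipschitz continuity of $\mathcal{T}$.
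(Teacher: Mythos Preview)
Your proposal is correct and follows essentially the same approach as the paper: a direct add-and-subtract (telescoping) decomposition of each operator, followed by termwise application of the Lipschitz hypotheses on $f$ and $\underline{p}$, the elementary $\min$-Lipschitz inequality, the Lipschitz property of $\softmin$, and the bound $\underline{p}\in[0,1]$. The organization differs only cosmetically---the paper treats the $Q$- and $\mu$-variations of each operator in separate displays rather than writing one joint telescope---and your closing paragraph explaining \emph{why} the $\softmin$ regularization removes the $\|Q\|_\infty$ factor in $\underline{\mathcal{P}}$ is a nice conceptual addition that the paper leaves implicit.
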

\begin{proof}
    Let us denote by $L_p$ and $L_f$ the  Lipschitz constants of $p$ and $f$  respectively. Let $(Q,\mu),(Q',\mu') \in \RR^{|\mathcal{X}| \times |\mathcal{A}|} \times \Delta^{|\mathcal{X}|}$. We first consider $\mathcal{T}$. We have
    \begin{align*}
        \|\mathcal{T}(Q,\mu) - \mathcal{T}(Q',\mu)\|_{\infty}
        &\le \gamma \sum_{x'} \max_{x,a} p(x' | x,a,\mu) \left|\min_{a'}Q(x',a') - \min_{a'}Q'(x',a')\right| + \left\| Q - Q'\right\|_{\infty}
        \\
        &\le (\gamma + 1) \left\| Q - Q'\right\|_{\infty}.
    \end{align*}
    Moreover,
    \begin{align*}
        \|\mathcal{T}(Q,\mu) - \mathcal{T}(Q,\mu')\|_{\infty}
        &\le
        |f(x, a, \mu) - f(x, a, \mu')| 
        \\
        &\qquad + \gamma \sum_{x'} |p(x' | x,a,\mu) - p(x' | x,a,\mu')| \, |\min_{a'}Q(x',a')|
        \\
        &\le
        (L_f 
        + \gamma L_p \|Q\|_\infty )|\mathcal{X}| \|\mu- \mu'\|_{\infty},  
    \end{align*}
    where $L_f$ and $L_p$ are respectively the Lipschitz constants of $f$ and $p$ with respect to $\mu$. If $p$ is independent of $\mu$, we obtain 
    \begin{align*}
        \|\mathcal{T}(Q,\mu) - \mathcal{T}(Q,\mu')\|_{\infty}
        &\le
        L_f 
         \|\mu- \mu'\|_{\infty}.
    \end{align*}
    
    We then show that the operator $\underline{\mathcal{P}}$ is Lipschitz continuous. We have
    \begin{align*}
        &\|\underline{\mathcal{P}}(Q, \mu) - \underline{\mathcal{P}}(Q, \mu')\|_{\infty}
        \\
        &\le \|\mu \underline{P}^{Q,\mu} - \mu' \underline{P}^{Q,\mu'}\|_{\infty} + \|\mu - \mu'\|_{\infty}
        \\
        &\le \left\|\sum_x \Big(\underline{p}(\cdot | x, \softmin Q(x), \mu)\mu(x) - \underline{p}(\cdot | x, \softmin Q(x), \mu')\mu'(x)\Big) \right\|_{\infty} 
        \\
        &\qquad + \|\mu - \mu'\|_{\infty}.
    \end{align*}
    For the first term, we note that, for every $x \in \mathcal{X}$,
    \begin{align*}
        &\left\| \Big(\underline{p}(\cdot | x, \softmin Q(x), \mu)\mu(x) - \underline{p}(\cdot | x, \softmin Q(x), \mu')\mu'(x)\Big)\right\|_{\infty}
        \\
        &\le 
        \left\| \Big(\underline{p}(\cdot | x, \softmin Q(x), \mu) - \underline{p}(\cdot | x, \softmin Q(x), \mu') \Big)\mu(x)\right\|_{\infty}
        \\
        &\qquad\qquad + \left\| \underline{p}(\cdot | x, \softmin Q(x), \mu')\Big(\mu(x) - \mu'(x)\Big)\right\|_{\infty}
        \\
        &\le 
        (L_p + 1)  \left\|\mu - \mu'\right\|_{\infty},
    \end{align*}
    where we used the fact that discrete probability measures are non-negative and bounded by $1$. 
    
    Moreover, we have
    \begin{align*}
        \|\underline{\mathcal{P}}(Q, \mu) - \underline{\mathcal{P}}(Q', \mu)\|_{\infty}
        &\le 
        \|\mu (\underline{P}^{Q,\mu} -  \underline{P}^{Q',\mu'})\|_{\infty}
        \\
        &\le 
        \sum_x \|\underline{p}(\cdot | x, \softmin Q(x), \mu) - \underline{p}(\cdot | x, \softmin Q'(x), \mu)\|_{\infty}
        \\
        &\le \sum_x L_p \|\softmin Q(x) - \softmin Q'(x)\|_{\infty}
        \\
        &\le |\mathcal{X}| \, L_p \, L_s \, c_{2,\infty} \, \| Q - Q'\|_{\infty},
    \end{align*}
    which concludes the proof. 
\end{proof}

\subsection{Application to a discrete model for the LQ problem}\label{discretization}

 Recall that the continuous linear-quadratic model we consider is defined by~\eqref{eq: benchmark}. 
Here, we propose a finite space MDP which approximates the dynamics of a typical agent in this continuous LQ model. We consider that the action space is given by $\mc{A} = \{ a_0=-1, a_1 = -1+\Delta_{.}, \dots, a_{N_{\mc{A}}}=1-\Delta_{.}, a_{N_{\mc{A}}}=1 \}$ and the state space by  $\mc{X} = \{ x_0=x_c-2, x_1=x_c-2-\Delta_{.}, \dots, x_{N_{\mc{X}}-1}=x_c+2-\Delta_{.}, x_{N_{\mc{X}}}=x_c+2 \}$, where $x_c$ is the center of the state space. The step size for the discretization of the spaces  $\mc{X}$ and $\mc{A}$ is given by $\Delta_{.} = \sqrt{\Delta t} = 10^{-1} $.

Consider the transition probability:
$$
    p(x,x',a,\mu) = \mathbb{P}(Z^{x+a, \Delta t} \in [x'-\Delta_{.}/2, x'+\Delta_{.}/2])
    = \Phi_{x+a, \sigma^2 \Delta t}(x'+\Delta_{.}/2) - \Phi_{x+a, \sigma^2 \Delta t}(x'-\Delta_{.}/2),
$$
where $Z \sim \mathcal{N}(x+a,\sigma^2 \Delta t)$ and $\Phi_{x+a,\sigma^2 \Delta t}$ is the cumulative distribution function of the $\mathcal{N}(x+a,\sigma^2 \Delta t)$ distribution. 
Moreover, consider that the one-step cost function is given by $f(x,a,\mu) \Delta t$ with 
\begin{equation*}
  f(x,a,\mu) = \frac{1}{2}a^2 + c_1 \left( x- c_2 \sum_{\xi \in S} \mu(\xi)\right)^2 + c_3 \left( x- c_4 \right)^2 + c_5 \left(\sum_{\xi \in S} \mu(\xi)\right)^2,
    \qquad
    b(x,a,\mu) = a,   
\end{equation*}

For simplicity, we write $\bar\mu = \sum_{\xi \in S} \mu(\xi)$.

\begin{lemma}
    In this model, $f$ is Lipschitz continuous with respect to $\mu$ and $\underline{p}$ is Lipschitz continuous with respect to $\nu$ and $\mu$
\end{lemma}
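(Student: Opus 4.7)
The plan is to reduce everything to elementary Lipschitz estimates by exploiting three observations: (i) the map $\mu \mapsto \bar\mu$ is linear (hence Lipschitz) and takes its values in a bounded set because $\mu \in \Delta^{|\mathcal{X}|}$ and $\mathcal{X}$ is finite; (ii) the transition kernel $p$ written above does not depend on $\mu$ at all, since $\Phi_{x+a,\sigma^2 \Delta t}$ is defined purely in terms of $x$ and $a$; (iii) $\underline{p}$ depends linearly on $\nu$ by its very definition $\underline{p}(x'|x,\nu,\mu) = \sum_a \nu(a) p(x'|x,a,\mu)$.

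The first step is to check Lipschitz continuity of $f$ in $\mu$. Writing
$$
    f(x,a,\mu) - f(x,a,\mu') = c_1\bigl[(x - c_2\bar\mu)^2 - (x - c_2\bar\mu')^2\bigr] + c_5\bigl[\bar\mu^2 - (\bar\mu')^2\bigr],
$$
we factor each bracket and bound the resulting sum-of-products by using $|\bar\mu|, |\bar\mu'| \le 1$ (or more generally a fixed bound $M$ independent of $\mu$, since $\mu$ is supported on a fixed bounded set). The map $\mu \mapsto \bar\mu$ is a linear functional, so $|\bar\mu - \bar\mu'| \le C_{\mathcal{X}} \|\mu - \mu'\|_\infty$ for some constant depending only on $|\mathcal{X}|$ and the magnitudes of the support. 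Combining yields a Lipschitz estimate $|f(x,a,\mu)-f(x,a,\mu')| \le L_f \|\mu-\mu'\|_\infty$ with $L_f$ depending only on $c_1, c_2, c_5, x, M$; taking the maximum over the finite set $\mathcal{X} \times \mathcal{A}$ gives a uniform constant.

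The second step is to check Lipschitz continuity of $\underline{p}$. For the $\mu$-dependence, we simply observe that $p(x'|x,a,\mu)$ does not involve $\mu$ in the present discrete model, so neither does $\underline{p}$; the Lipschitz constant in $\mu$ is $0$. For the $\nu$-dependence, we use linearity:
$$
    |\underline{p}(x'|x,\nu,\mu) - \underline{p}(x'|x,\nu',\mu)|
    = \Bigl|\sum_a (\nu(a) - \nu'(a)) \, p(x'|x,a)\Bigr|
    \le \sum_a |\nu(a) - \nu'(a)|,
$$
since $0 \le p(x'|x,a) \le 1$, which gives a Lipschitz constant $|\mathcal{A}|$ in the sup-norm (or $1$ in the $\ell^1$-norm).

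I do not expect any real obstacle: the only modestly subtle point is keeping the Lipschitz constant of $f$ uniform in $(x,a)$, which is immediate here because $\mathcal{X}$ and $\mathcal{A}$ are finite so one can take the maximum of the finitely many local constants. All constants depend only on the model parameters $c_1,c_2,c_5$, on the diameter of $\mathcal{X}$, and on $|\mathcal{X}|, |\mathcal{A}|$.
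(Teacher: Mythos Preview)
Your proposal is correct and follows essentially the same route as the paper for $f$: factor the difference of squares, bound $|\bar\mu|$ using that $\mu$ is supported on the fixed finite set $\mathcal{X}$, and control $|\bar\mu-\bar\mu'|$ by $|\mathcal{X}|\,\|\mu-\mu'\|_\infty$ via linearity. For $\underline{p}$, both you and the paper note that $p$ is independent of $\mu$ in this model, so Lipschitz continuity in $\mu$ is trivial; but your treatment of the $\nu$-variable is cleaner. You use only the model-agnostic fact $0\le p(x'|x,a)\le 1$ to get
\[
|\underline{p}(x'|x,\nu,\mu)-\underline{p}(x'|x,\nu',\mu)| \le \sum_a |\nu(a)-\nu'(a)| \le |\mathcal{A}|\,\|\nu-\nu'\|_\infty,
\]
whereas the paper expands the Gaussian CDFs and integrates, arriving at a constant that depends on $\Delta t$ and the spaces. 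Your argument is shorter and yields an explicit, model-independent Lipschitz constant; the paper's route confirms the same conclusion but makes unnecessary use of the specific form of $p$. One small caveat: your parenthetical ``$|\bar\mu|\le 1$'' is not literally true when $\bar\mu$ is the first moment on a grid extending past $[-1,1]$; you correctly fall back on the generic bound $M=\max_{x\in\mathcal{X}}|x|$, and that is what the paper uses as well.
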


\begin{proof}

We start with $f$. %
For the $\mu$ component, we have:
    \begin{align*}
    |f(x,a,\mu) - f(x,a,\mu')| 
    &\le c \left|\left( x- c_2 \bar\mu\right)^2 - \left( x- c_2 \bar\mu'\right)^2\right| + c \left|\left(\bar\mu\right)^2 - \left(\bar\mu'\right)^2\right|
    \\
    &\le c \left(\bar\mu' - \bar\mu \right) \cdot \left(2x + (\bar\mu' - \bar\mu)\right) + c (\bar\mu - \bar\mu')(\bar\mu + \bar\mu')
    \\
    &\le c \max_{x \in S}\|x\|_\infty \, \left(\bar\mu' - \bar\mu \right)
    \\
    &\le c \max_{x \in S}\|x\|_\infty \, \sum_{x \in S}\left(\mu'(x) - \mu(x) \right)
    \\
    &\le c \max_{x \in S}\|x\|_\infty \, |S| \, \|\mu' - \mu\|_\infty,
    \end{align*}
    where $c>0$ is a constant depending only on the parameters of the model and whose value may change from line to line.

Then we consider $\underline{p}$. It is independent of $\mu$ in this model. For the action component, we have:
    \begin{align*}
        &|\underline{p}(x,x',\nu,\mu) - \underline{p}(x,x',\nu',\mu)|
        \\
        &= \Big|\sum_{a}\nu(a)\Big(\Phi_{x+a, \sigma^2 \Delta t}(x'+\Delta_{.}/2) - \Phi_{x+a, \sigma^2 \Delta t}(x'-\Delta_{.}/2) \Big) 
        \\
        &\qquad\qquad - \sum_{a'}\nu'(a')\Big(\Phi_{x+a', \sigma^2 \Delta t}(x'+\Delta_{.}/2) - \Phi_{x+a', \sigma^2 \Delta t}(x'-\Delta_{.}/2)\Big)\Big|
        \\
        &= \left|\sum_{a} \left(\nu(a)\Phi_{x+a, \sigma^2 \Delta t}(x'+\Delta_{.}/2) - \nu'(a)\Phi_{x+a, \sigma^2 \Delta t}(x'+\Delta_{.}/2) \right)\right|
        \\
        &\qquad +
        \left|\sum_{a} \left(\nu(a)\Phi_{x+a, \sigma^2 \Delta t}(x'-\Delta_{.}/2) \Big) -  \nu'(a)\Phi_{x+a, \sigma^2 \Delta t}(x'-\Delta_{.}/2) \right)\right|
        \\
        &= \int_{-\infty}^{x'+\Delta_{.}/2} \frac{1}{\sigma \sqrt{2\pi \Delta t }} \left|\sum_{a}(\nu(a) - \nu'(a))e^{-\frac{(y-(x+a))^2}{2\sigma^2 \Delta t}} \right| dy
        \\
        &\qquad\qquad + \int_{-\infty}^{x'-\Delta_{.}/2} \frac{1}{\sigma \sqrt{2\pi \Delta t }} \left|\sum_{a}(\nu(a) - \nu'(a)) e^{-\frac{(y-(x+a))^2}{2\sigma^2 \Delta t}} \right| dy
        \\
        &\le c \|\nu - \nu'\|_\infty,
    \end{align*}
    where $c$ is a constant depending only on the model (and in particular on the state space, the action space and $\Delta t$).

\end{proof}

\section{The Bellman equation for the optimal Q function in the Asymptotic MFC framework}\label{appendix : MFC-RL}
In this appendix, we provide the derivation of the Bellman equation \eqref{eq:def-Q-functionMFC} for the modified $Q-$function presented in section \ref{subsec: Q-A-MFC}. \newline

\noindent Let $\mc{X}$ and $\mc{A}$ be discrete and finite state and action spaces. Let $V^\alpha: \mc{X}  \mapsto \mc{R} $ and $Q^\alpha: \mc{X} \times \mc{A} \mapsto \mc{R} $  be value function relative to the policy $\alpha$ and the corresponding   modified $Q-$function defined as follows
\begin{align}%
    V^{\alpha}(x)  &\vcentcolon= \mathbb{E} \left[ \sum_{n=0}^{\infty} \gamma^n f(X_{n}, \alpha(X_{n}),\mu^{\alpha})\,\Big\vert\, X_{0} = x \right], \label{def_v_pi}\\
    Q^\alpha(x,a)&\vcentcolon=f(x,a, \mu^{\tilde\alpha})+\EE\left[\sum_{n=1}^\infty \gamma^n f(X_{n},\alpha(X_{n}), \mu^\alpha) \,\Big\vert\, X_{0}=x,A_{0}=a\right],\label{def_q_pi}
\end{align}
where 
\begin{equation*}
    \mu^{\alpha}= \lim_{n\mapsto \infty} \mc{L}(X^{\alpha}_{n}) \quad \text{and} \quad \tilde{\alpha}(s) = \begin{cases}
        \alpha(s), &\quad \forall s\neq x,\\
        a,  &\quad \text{if } s= x.\\
    \end{cases}
\end{equation*}
\begin{theorem}\label{thm: bellman A-MFC}

The optimal $Q^*(x,a)=\min_\alpha Q^\alpha(x,a)$ satisfies the Bellman equation
\begin{equation}
\label{eq:bellman A-MFC}
    Q^*(x,a) = f(x, a, \tilde{\mu}^*) + \gamma\sum_{x' \in \mathcal{X}} p(x' | x, a, \tilde{\mu}^*) \min_{a'} Q^*(x',a'), \qquad (x,a) \in \mathcal{X} \times \mathcal{A}, \end{equation}
where the optimal control $\alpha^*$ is given by   $\alpha^*(x)=\argmin_a Q^*(x,a)$, the modification $\tilde\alpha^*(x)$ is based on the pair $(x,a)$ and 
$\tilde{\mu}^*:=\mu^{\tilde{\alpha}^*}$. 
\end{theorem}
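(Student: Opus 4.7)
The plan is to establish \eqref{eq:bellman A-MFC} by expanding $Q^{\alpha^*}$ one step forward in time and exploiting the Markov property at the first transition. As a preliminary, I would observe that when the action argument equals the value of the policy, the perturbation $\tilde\alpha$ from \eqref{def:tildealpha} degenerates to $\alpha$ itself, so $\mu^{\tilde\alpha}=\mu^{\alpha}$; a direct comparison of \eqref{def_v_pi} and \eqref{def_q_pi} then yields the identity $V^{\alpha}(x) = Q^{\alpha}(x,\alpha(x))$ for every admissible $\alpha$ and every $x$. This identity plays the same role that $V = Q(\cdot,\alpha(\cdot))$ plays in the classical MDP derivation of the Bellman equation.

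Next, starting from \eqref{def_q_pi} with $\alpha = \alpha^*$ and isolating the $n=0$ term, I would rewrite the remaining tail as a discounted expectation over $(X_n)_{n\ge 1}$. The key probabilistic point is that the construction underlying \eqref{def_q_pi} treats the representative agent as using the perturbed control $\tilde\alpha^*$ at step $0$ against the limit distribution $\tilde\mu^* = \mu^{\tilde\alpha^*}$, so that $X_1 \sim p(\cdot\,|\,x,a,\tilde\mu^*)$; conditional on $X_1 = x'$, the shifted process reverts to the unperturbed MKV dynamics driven by $\alpha^*$ against $\mu^{\alpha^*}$. The Markov property together with the shift of the geometric factor then turns the tail into $\gamma\, V^{\alpha^*}(x')$, giving
\begin{equation*}
    Q^{\alpha^*}(x,a) \;=\; f(x,a,\tilde\mu^*) \;+\; \gamma \sum_{x'\in \mathcal{X}} p(x'\,|\,x,a,\tilde\mu^*)\, V^{\alpha^*}(x').
\end{equation*}
Applying the preliminary identity together with the defining property $\alpha^*(x') = \argmin_{a'} Q^*(x',a')$ and the identification $Q^* = Q^{\alpha^*}$, I then obtain $V^{\alpha^*}(x') = Q^{\alpha^*}(x',\alpha^*(x')) = \min_{a'} Q^*(x',a')$, and substitution yields exactly \eqref{eq:bellman A-MFC}.

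The hard part will be justifying the probabilistic construction of the expanded $Q^{\alpha^*}$ with care: because $\tilde\alpha^*$ differs from $\alpha^*$ only at the single state $x$ at time $0$, the step-$0$ transition kernel must be argued to be $p(\cdot\,|\,\cdot,\cdot,\tilde\mu^*)$ while the downstream dynamics and costs are governed by $\mu^{\alpha^*}$, so the two limiting distributions have to be reconciled along the same trajectory. A secondary delicate point is the identification $Q^* = Q^{\alpha^*}$: unlike in classical MDPs, the dependence of $\mu^\alpha$ on $\alpha$ couples the fixed-point condition on $\alpha^*$ (as a pointwise minimizer of $Q^*$) to the MKV structure of the problem, so one should read the statement as asserting that any consistent pair $(\alpha^*, Q^*)$ satisfying those coupled conditions obeys the Bellman relation \eqref{eq:bellman A-MFC}, rather than as a fully standalone verification of existence.
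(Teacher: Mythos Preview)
Your one-step expansion of $Q^{\alpha}$ and the identity $V^{\alpha}(x)=Q^{\alpha}(x,\alpha(x))$ are exactly what the paper uses (its Theorem~\ref{thm:bellman_q_pi} and Lemma~\ref{lemma:property_1}), so the skeleton of your argument matches. The genuine gap is the step you yourself flag: the identification $Q^* = Q^{\alpha^*}$. In the theorem as stated, $Q^*(x,a)=\min_\alpha Q^\alpha(x,a)$ is a \emph{pointwise} minimum over policies and $\alpha^*$ is then defined as $\argmin_a Q^*(\cdot,a)$; nothing a priori guarantees that this particular $\alpha^*$ is a simultaneous minimizer of $Q^\alpha(x,a)$ at every pair $(x,a)$. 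Your proposed reinterpretation (``read the statement as asserting that any consistent pair\dots'') changes the content of the theorem rather than proving it.

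The paper does not sidestep this point; most of the appendix is devoted to it. It first proves a policy-improvement theorem adapted to the MFC setting: if $\tilde\alpha$ is the single-state modification of $\alpha$ at $x$ and $Q^{\alpha}(x,\tilde\alpha(x))$ strictly improves on $V^{\alpha}(x)$, then $V^{\tilde\alpha}(x')>V^{\alpha}(x')$ for \emph{every} $x'$. The nontrivial part here is that the modification changes the limiting population distribution from $\mu^{\alpha}$ to $\mu^{\tilde\alpha}$, so the comparison has to be carried through a first-hitting-time decomposition at the modified state. From this, a policy-iteration argument on the finite spaces $\mathcal{X},\mathcal{A}$ produces a policy $\alpha^N$ that is greedy with respect to its own $Q$-function and can no longer be improved; this is the $\alpha^*$ for which $Q^{\alpha^*}=Q^*$ and $V^{\alpha^*}=V^*$. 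Only then does the chain you wrote close: $\min_{a'}Q^*(x',a') = V^*(x') = V^{\alpha^*}(x') = Q^{\alpha^*}(x',\alpha^*(x'))$, which combined with the one-step expansion yields \eqref{eq:bellman A-MFC}. Without the policy-improvement step, your substitution $V^{\alpha^*}(x')=\min_{a'}Q^*(x',a')$ is unjustified.
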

\begin{remark}\label{rmk:mu_tilde_star}
The population distribution $\tilde{\mu}^*$ based on the modification of $\alpha^*$ given the pair $(x,\alpha^*(x))$ is equal to ${\mu}^*$ . Indeed, $\tilde{\alpha}^*$ is equal to $\alpha^*$ itself, i.e. 
\begin{equation*}
         \tilde{\alpha}^*(s) = \begin{cases}
        \alpha^*(s), &\quad \forall s\neq x,\\
        \alpha^*(s),  &\quad \text{if } s= x.\\
    \end{cases}
\end{equation*}
\end{remark}
\begin{remark}\label{rmk:min_Q_star}
The term $\min_{a'} Q^*(x',a')$ does not depend on $\tilde{\mu}^*$ , i.e. 
\begin{equation*}
\begin{aligned}
     \min_{a'} Q^*(x',a') &= Q^*(x',\alpha^*(x')) = \\
      &=f(x', \alpha^*(x'), \tilde{\mu}^*) + \gamma\sum_{x'' \in \mathcal{X}} p(x'' | x', \alpha^*(x'), \tilde{\mu}^*) \min_{a'} Q^*(x'',a')=\\
      &\myeq{$\square$}{=}f(x', \alpha^*(x'), {\mu}^*) + \gamma\sum_{x' \in \mathcal{X}} p(x'' | x', \alpha^*(x'), {\mu}^*) \min_{a'} Q^*(x'',a')
\end{aligned}
\end{equation*}
where step $\square$ is due to Remark \ref{rmk:mu_tilde_star}. It follows that \eqref{eq:bellman A-MFC} depends on $\tilde{\mu}^*$ only through the cost due to the first step. 
\end{remark}
In order to prove Theorem \ref{thm: bellman A-MFC}, the following results are required.
\begin{theorem}\label{thm:bellman_q_pi}
The Bellman equation for $Q^{\alpha}$ is given by
\begin{equation} \label{eq:bellman_q_pi}
  Q^{\alpha}(x,a)  =  f(x,a,\mu^{\tilde{\alpha}}) +\gamma \mathbb{E} \left[ Q^{\alpha}(X_{1},\alpha(X_{1}))\,\Big\vert\, X_{0} = x, A_{0} = a \right],    
\end{equation}
\end{theorem}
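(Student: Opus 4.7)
The plan is to unfold the definition \eqref{def_q_pi}, peel off the first term, and reduce the tail of the discounted sum back to the same object $Q^{\alpha}$ via the Markov property, together with the key identity that the tilde modification \eqref{def:tildealpha} is trivial when applied to the pair $(y, \alpha(y))$ (cf.\ Remark \ref{rmk:mu_tilde_star} applied in the re-centered problem).

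First, I would separate the $n=0$ summand of \eqref{def_q_pi}, namely $f(x,a,\mu^{\tilde{\alpha}})$, and reindex the tail with $m = n-1$, writing
$$
Q^{\alpha}(x,a) = f(x,a,\mu^{\tilde{\alpha}}) + \gamma\, \EE\!\left[\sum_{m=0}^{\infty} \gamma^{m} f(X_{m+1},\alpha(X_{m+1}),\mu^{\alpha}) \,\Big\vert\, X_{0}=x,\, A_{0}=a\right].
$$
Then I would condition on $X_{1}$ using the tower property and invoke the Markov property of the McKean--Vlasov process: given $X_{1}=y$, the shifted process $(X_{m+1})_{m\geq 0}$ has the same law as a fresh MKV process starting at $y$, following the feedback control $\alpha$ with population distribution frozen at $\mu^{\alpha}$ throughout. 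Note that the first transition from $x$ is drawn from $p(\cdot \mid x,a,\mu^{\tilde{\alpha}})$, but from $X_{1}$ onward the dynamics are governed by $\mu^{\alpha}$, consistently with the definition of $Q^{\alpha}$.

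The decisive step is to recognize this inner conditional expectation as $Q^{\alpha}(y,\alpha(y))$. Applying \eqref{def_q_pi} at the pair $(y,\alpha(y))$, the corresponding tilde-modification $\tilde{\alpha}'$ satisfies $\tilde{\alpha}'(s) = \alpha(s)$ for every $s$, since it sets the action at $y$ to $\alpha(y)$, which is exactly what $\alpha$ already chooses. Hence $\mu^{\tilde{\alpha}'} = \mu^{\alpha}$, the first-step cost $f(y,\alpha(y),\mu^{\tilde{\alpha}'})$ reduces to $f(y,\alpha(y),\mu^{\alpha})$, and it merges with the $m=0$ term of the shifted sum to give precisely $Q^{\alpha}(y,\alpha(y))$. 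Plugging back into the outer expectation produces $\gamma\,\EE[Q^{\alpha}(X_{1},\alpha(X_{1})) \mid X_{0}=x,\, A_{0}=a]$, which combined with the isolated first term yields \eqref{eq:bellman_q_pi}.

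The only real subtlety is the bookkeeping around $\tilde{\alpha}$: it is defined relative to the initial pair $(x,a)$, so after one step one must re-examine the tilde construction in the re-centered problem. The fact that it becomes trivial at $(X_{1},\alpha(X_{1}))$ is precisely what allows the recursion to close with the same $Q^{\alpha}$ rather than producing a different modified object at each step.
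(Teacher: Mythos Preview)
Your proposal is correct and follows essentially the same route as the paper's proof: separate the first cost term, apply the tower and Markov properties to condition on $X_{1}$, and identify the tail as $Q^{\alpha}(X_{1},\alpha(X_{1}))$. In fact you are more explicit than the paper about the crucial last identification, namely that the tilde-modification at the pair $(X_{1},\alpha(X_{1}))$ leaves $\alpha$ unchanged so that $\mu^{\tilde{\alpha}'}=\mu^{\alpha}$; the paper performs this step silently in its final equality.
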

\begin{lemma} \label{lemma:property_1} The value function relative to the policy $\alpha$ is equivalent to the corresponding $Q-$function evaluated on the pair $(x,\alpha(x))$, i.e.
\begin{equation}\label{eq:property_1}
V^{\alpha}(x)  = Q^{\alpha}(x,\alpha(x)).
\end{equation}
\end{lemma}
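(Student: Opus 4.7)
The plan is essentially a definition chase, exploiting the fact that the ``modification'' $\tilde\alpha$ in~\eqref{def:tildealpha} collapses to $\alpha$ itself precisely when $a=\alpha(x)$. That observation is the only non-bookkeeping step in the argument.

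First I would specialize the definition~\eqref{def_q_pi} of $Q^\alpha$ at the pair $(x,\alpha(x))$, which yields
\[
Q^\alpha(x,\alpha(x)) = f\!\left(x,\alpha(x),\mu^{\tilde\alpha}\right) + \EE\!\left[\sum_{n=1}^\infty \gamma^n f(X_n,\alpha(X_n),\mu^\alpha) \,\Big|\, X_0=x,\, A_0=\alpha(x)\right],
\]
where $\tilde\alpha$ is the control defined in~\eqref{def:tildealpha} with $a=\alpha(x)$. By inspection, if $s \neq x$ then $\tilde\alpha(s)=\alpha(s)$, and if $s=x$ then $\tilde\alpha(x)=a=\alpha(x)$ as well, so $\tilde\alpha \equiv \alpha$ on $\mathcal{X}$. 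Consequently the McKean--Vlasov dynamics associated with $\tilde\alpha$ and with $\alpha$ coincide, and the identification $\mu^{\tilde\alpha}=\mu^\alpha$ follows.

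Substituting $\mu^{\tilde\alpha}=\mu^\alpha$ in the leading term and noting that $\alpha(x)=\alpha(X_0)$ since $X_0=x$, the first summand can be folded in as the $n=0$ term of the discounted series:
\[
Q^\alpha(x,\alpha(x)) = \EE\!\left[\sum_{n=0}^\infty \gamma^n f(X_n,\alpha(X_n),\mu^\alpha) \,\Big|\, X_0=x\right] = V^\alpha(x),
\]
where the last equality is precisely~\eqref{def_v_pi}.

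I do not anticipate any real obstacle: the whole point of introducing $\tilde\alpha$ in~\eqref{def:tildealpha} is to allow the first-step action $a$ to be decoupled from $\alpha(x)$ when defining a $Q$-function on the MKV dynamics, and this decoupling disappears as soon as $a$ happens to coincide with $\alpha(x)$. The only mild subtlety worth flagging in a clean write-up is the asymmetry in~\eqref{def_q_pi} between using $\mu^{\tilde\alpha}$ at time $0$ and $\mu^\alpha$ for $n\geq 1$; under the specialization $a=\alpha(x)$ the two distributions coincide, removing the asymmetry and making the identification with $V^\alpha(x)$ immediate.
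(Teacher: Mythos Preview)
Your proposal is correct and matches the paper's own argument essentially line for line: the sole substantive step in both is the observation that when $a=\alpha(x)$ the modified control $\tilde\alpha$ of~\eqref{def:tildealpha} coincides with $\alpha$, whence $\mu^{\tilde\alpha}=\mu^\alpha$ and the first-step cost folds into the discounted sum. The only cosmetic difference is direction---the paper unfolds $V^\alpha(x)$ and recognizes $Q^\alpha(x,\alpha(x))$, whereas you start from $Q^\alpha(x,\alpha(x))$ and fold back into $V^\alpha(x)$.
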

\begin{theorem}[Policy improvement]\label{thm:policy_improvement_amfc}
Let $\tilde{\alpha}$ be a policy derived by $\alpha$
\begin{equation*}
\begin{aligned}
\tilde{\alpha}(s) &= \begin{cases} \alpha(s), \quad &\text{for } s\neq x,\\ a, \quad &\text{for } s= x.\end{cases} 
\end{aligned}
\end{equation*}
such that
\begin{equation}\label{eq:hp_policy_improvement_amfc}
    Q^{\alpha}(x,\tilde{\alpha}(x)) > V^{\alpha}(x). 
\end{equation}
Then,
\begin{equation}\label{eq:policy_improvement_amfc}
     V^{\tilde{\alpha}}(x') > V^{\alpha}(x') \quad \forall x' \in \mathcal{X}. 
\end{equation}
\end{theorem}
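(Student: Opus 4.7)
My approach is to mimic the classical policy-improvement proof for MDPs, carefully tracking the mean-field coupling. Set $\Delta(y) := V^{\tilde\alpha}(y) - V^{\alpha}(y)$, and aim to show $\Delta(y) > 0$ everywhere. The two key ingredients are the Bellman equation for $V^{\tilde\alpha}$ (from its definition) and the identity for $Q^{\alpha}$ from Theorem~\ref{thm:bellman_q_pi}; Lemma~\ref{lemma:property_1} is used to replace $Q^{\alpha}(\cdot, \alpha(\cdot))$ by $V^{\alpha}(\cdot)$ inside that identity.

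\paragraph{Strict inequality at $x$.} Using Theorem~\ref{thm:bellman_q_pi} at $(x, a) = (x, \tilde\alpha(x))$ together with Lemma~\ref{lemma:property_1}, the hypothesis $Q^{\alpha}(x, \tilde\alpha(x)) > V^{\alpha}(x)$ rewrites as
\begin{equation*}
    V^{\alpha}(x) < f(x, a, \mu^{\tilde\alpha}) + \gamma \sum_{x'} p(x' \mid x, a, \mu^{\tilde\alpha}) \, V^{\alpha}(x').
\end{equation*}
Subtracting the Bellman equation for $V^{\tilde\alpha}$ at $x$ (noting $\tilde\alpha(x) = a$, so the first-step kernel is also $p(\cdot\mid x, a, \mu^{\tilde\alpha})$) immediately gives
\begin{equation*}
    \Delta(x) > \gamma \sum_{x'} p(x' \mid x, a, \mu^{\tilde\alpha}) \, \Delta(x').
\end{equation*}

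\paragraph{Propagation to $y \neq x$ and iteration.} For $y \neq x$ one has $\tilde\alpha(y) = \alpha(y)$, and subtracting the Bellman equations for $V^{\tilde\alpha}(y)$ and $V^{\alpha}(y)$ yields
\begin{align*}
    \Delta(y) &= \bigl[f(y, \alpha(y), \mu^{\tilde\alpha}) - f(y, \alpha(y), \mu^{\alpha})\bigr] + \gamma \sum_{z} p(z \mid y, \alpha(y), \mu^{\tilde\alpha}) \, \Delta(z) \\
    &\quad + \gamma \sum_{z} \bigl[p(z \mid y, \alpha(y), \mu^{\tilde\alpha}) - p(z \mid y, \alpha(y), \mu^{\alpha})\bigr] \, V^{\alpha}(z).
\end{align*}
Combining the two relations and iterating along trajectories of the Markov chain $(X_n^{\tilde\alpha})$ is meant to transport the strict surplus at $x$ to every state. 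Admissibility (irreducibility and aperiodicity of $(X_n^{\tilde\alpha})$) guarantees that $x$ is reached in finite expected time from any $y$, so the positive contribution at $x$ is propagated and forces $\Delta(y) > 0$ for every $y$.

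\paragraph{Main obstacle.} The hard part is the genuinely mean-field nature of the update: at $y \neq x$ the differences $f(y, \alpha(y), \mu^{\tilde\alpha}) - f(y, \alpha(y), \mu^{\alpha})$ and $p(\cdot \mid y, \alpha(y), \mu^{\tilde\alpha}) - p(\cdot \mid y, \alpha(y), \mu^{\alpha})$ have no obvious sign and, a priori, could cancel the surplus at $x$ after iteration. Handling them will likely require the Lipschitz continuity of $f$ and $p$ in $\mu$ used in Appendix~\ref{appendix : borkar_assumptions}, together with the ergodic behaviour of the chain, so that the perturbation of $\mu^{\tilde\alpha}$ away from $\mu^{\alpha}$ remains small enough and oriented consistently with the positive gap at $x$.
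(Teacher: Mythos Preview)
Your overall plan matches the paper's: iteratively unroll the Bellman relation at $x$ using Theorem~\ref{thm:bellman_q_pi} and Lemma~\ref{lemma:property_1}, then propagate the gain to the remaining states. The paper carries this out in two steps. Step~1 establishes $V^{\alpha}(x) < V^{\tilde\alpha}(x)$ by repeatedly expanding $Q^{\alpha}(X_n,\tilde\alpha(X_n))$ via~\eqref{eq:bellman_q_pi} and letting the truncation go to infinity. Step~2 treats $x' \neq x$ not by subtracting Bellman equations as you do, but by introducing the first hitting time $\tau_x = \min\{n : X_n = x\}$ and splitting $V^{\alpha}(x')$ into the pre-$\tau_x$ contribution (where the controls $\alpha$ and $\tilde\alpha$ coincide) and the post-$\tau_x$ contribution (where Step~1 applies after the strong Markov property). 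This hitting-time decomposition is cleaner than your Bellman-difference iteration and avoids summing indefinite-sign increments.

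That said, your proposal is not yet a proof: your final paragraph openly concedes that you do not know how to control the mean-field cross terms $f(y,\alpha(y),\mu^{\tilde\alpha}) - f(y,\alpha(y),\mu^{\alpha})$ and the analogous transition-kernel differences, and you only speculate that Lipschitz bounds ``will likely'' suffice. That is a genuine gap. It is worth noting that the paper's own argument does not rigorously close this gap either: in Step~1 it writes $f(X_n,\tilde\alpha(X_n),\mu^{\tilde\alpha})$ for every $n$, whereas the iterated use of~\eqref{eq:bellman_q_pi} only produces $\mu^{\tilde\alpha}$ when $X_n = x$ and $\mu^{\alpha}$ otherwise; in Step~2 it silently replaces $\mu^{\alpha}$ by $\mu^{\tilde\alpha}$ in the pre-$\tau_x$ sum and in the underlying dynamics. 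So the obstacle you flag is real, and the paper simply suppresses these cross terms rather than bounding them. If you want to match the paper, adopt the hitting-time decomposition for the second step, but be aware that the $\mu$-dependence is being handled formally rather than quantitatively.
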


\begin{theorem}\label{thm:property_3} Let $V^*:\mc{X} \mapsto \mc{R}$ be defined as $V^*(x)=\max_{\alpha} V^{\alpha}(x)$. Then,
\begin{equation}\label{eq:property_3}
V^*(x)= \max_a \max_{\alpha} Q^{\alpha}(x,a), 
\end{equation}
\end{theorem}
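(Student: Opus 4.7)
\textbf{Proof plan for Theorem \ref{thm:property_3}.} My plan is to establish the equality by two inequalities. The direction $V^*(x) \le \max_a \max_\alpha Q^\alpha(x,a)$ is immediate: for every admissible $\alpha$, Lemma \ref{lemma:property_1} gives $V^\alpha(x) = Q^\alpha(x,\alpha(x)) \le \max_a Q^\alpha(x,a)$, and taking the supremum over $\alpha$ on both sides yields the bound. The content is therefore entirely in the reverse inequality $V^*(x) \ge Q^\alpha(x,a)$ for arbitrary $\alpha$ and $a$, which I will obtain by a case split driven by the policy improvement result (Theorem \ref{thm:policy_improvement_amfc}).

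Fix $\alpha$ and $a \in \mathcal{A}$, and let $\tilde\alpha$ be the single--state modification of $\alpha$ at $(x,a)$. If $Q^\alpha(x,a) \le V^\alpha(x)$, there is nothing to prove since $Q^\alpha(x,a) \le V^\alpha(x) \le V^*(x)$. Otherwise, $Q^\alpha(x,\tilde\alpha(x)) = Q^\alpha(x,a) > V^\alpha(x)$, which is exactly the hypothesis of Theorem \ref{thm:policy_improvement_amfc}; hence $V^{\tilde\alpha}(x') > V^\alpha(x')$ for every $x' \in \mathcal{X}$. The key step is to compare $Q^\alpha(x,a)$ and $Q^{\tilde\alpha}(x,a)$ via the Bellman equation of Theorem \ref{thm:bellman_q_pi}. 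The modification of $\tilde\alpha$ at the pair $(x,a)$ coincides with $\tilde\alpha$ itself (since $\tilde\alpha(x)=a$), so in the Bellman identity for $Q^{\tilde\alpha}(x,a)$ the distribution $\mu^{\tilde{\tilde\alpha}}$ equals $\mu^{\tilde\alpha}$, matching the one appearing in the Bellman identity for $Q^\alpha(x,a)$. Applying Lemma \ref{lemma:property_1} to rewrite the $Q$-values at the time-$1$ state as the corresponding value functions, the two Bellman equations differ only in their value-function term, giving
$$
    Q^{\tilde\alpha}(x,a) - Q^\alpha(x,a) \;=\; \gamma \sum_{x'} p(x'\mid x,a,\mu^{\tilde\alpha}) \bigl(V^{\tilde\alpha}(x') - V^\alpha(x')\bigr) \;>\; 0.
$$
Since $V^{\tilde\alpha}(x) = Q^{\tilde\alpha}(x,\tilde\alpha(x)) = Q^{\tilde\alpha}(x,a)$ by Lemma \ref{lemma:property_1}, we conclude $Q^\alpha(x,a) < V^{\tilde\alpha}(x) \le V^*(x)$, which closes the case and therefore the reverse inequality after taking maxima over $a$ and $\alpha$.

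The main subtlety I expect to handle carefully is the coherence of the distribution and transition kernel appearing in the two Bellman equations being subtracted: the cost and transition at the first time step depend on $\mu^{\tilde\alpha}$ rather than $\mu^\alpha$ because the action at $(x,a)$ is a deviation from $\alpha$, and one must verify that the analogous computation performed starting from $\tilde\alpha$ produces the \emph{same} $\mu^{\tilde\alpha}$ (this uses the idempotency noted in Remark \ref{rmk:mu_tilde_star} transposed to $\tilde\alpha$). Once this cancellation is recorded, strict positivity of the difference is a direct consequence of the strict pointwise inequality delivered by policy improvement, and the rest of the argument is purely algebraic.
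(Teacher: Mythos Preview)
Your argument is correct and takes a genuinely different route from the paper. The paper proves Theorem~\ref{thm:property_3} by a policy-iteration construction: starting from an arbitrary $\alpha^0$, it sweeps through the states $x_1,\dots,x_n$ of $\mathcal{X}$, at step $k$ replacing $\alpha^{k-1}(x_k)$ by $\argmax_a Q^{\alpha^{k-1}}(x_k,a)$ and invoking Theorem~\ref{thm:policy_improvement_amfc} to obtain $V^{\alpha^k}\ge V^{\alpha^{k-1}}$ everywhere. Finiteness of $\mathcal{X}\times\mathcal{A}$ forces the scheme to terminate at some $\alpha^N$ satisfying $\alpha^N(x)=\argmax_a Q^{\alpha^N}(x,a)$ for every $x$, hence $V^{\alpha^N}(x)=\max_a Q^{\alpha^N}(x,a)$; optimality of $\alpha^N$ is then outsourced to the Bellman--Dreyfus optimality principle.

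Your approach is more self-contained: the nontrivial inequality $Q^\alpha(x,a)\le V^*(x)$ is obtained by a \emph{single} application of policy improvement combined with the Bellman comparison that exploits the idempotency $\widetilde{\tilde\alpha}=\tilde\alpha$, so that both Bellman identities at $(x,a)$ share the same first-step cost $f(x,a,\mu^{\tilde\alpha})$ and transition $p(\cdot\mid x,a,\mu^{\tilde\alpha})$, and the difference collapses to $\gamma\sum_{x'}p(x'\mid x,a,\mu^{\tilde\alpha})\bigl(V^{\tilde\alpha}(x')-V^\alpha(x')\bigr)>0$. This avoids the iterative sweep, the finiteness-based termination argument, and the external appeal to Bellman--Dreyfus. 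What the paper's route buys in exchange is constructive content: it exhibits the optimal policy as the fixed point of greedy improvement, which is then reused verbatim in the proof of Theorem~\ref{thm: bellman A-MFC} (``the same policy $\alpha^*$ optimizes $V^\alpha$ and $Q^\alpha$'').
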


\begin{proof}[Theorem \ref{thm:bellman_q_pi}]
\begin{equation*}
\begin{aligned}
Q^{\alpha}(x,a)  &= f(x,a,\mu^{\tilde{\alpha}}) + \\
&+\gamma \mathbb{E} \left[ \mathbb{E} \left[ \sum_{n=1}^{\infty} \gamma^{n-1} f(X_{n},\alpha(X_{n}),\mu^{\alpha})\,\Big\vert\, X_{0} = x, A_{0} = \alpha(x), X_{1} \right]\,\Big\vert\, X_{0} = x, A_{0} = a \right]= \\ 
&= f(x,a,\mu^{\tilde{\alpha}}) + \gamma \mathbb{E} \left[ \mathbb{E} \left[ \sum_{n=1}^{\infty} \gamma^{n-1} f(X_{n},\alpha(X_{n}),\mu^{\alpha})\,\Big\vert\, X_{1} \right]\,\Big\vert\, X_{0} = x, A_{0} = a\right]= \\ 
&= f(x,a,\mu^{\tilde{\alpha}}) +\\
&+ \gamma \mathbb{E} \left[  f(X_{1},\alpha(X_{1}),\mu^{\alpha}) + \gamma \mathbb{E} \left[ \sum_{n=2}^{\infty} \gamma^{n-2} f(X_{n},\alpha(X_{n}),\mu^{\alpha})\,\Big\vert\, X_{1} \right]\,\Big\vert\, X_{0} = x, A_{0} = a \right]= \\ 
&=  f(x,a,\mu^{\tilde{\alpha}}) +\gamma \mathbb{E} \left[ Q^{\alpha}(X_{1},\alpha(X_{1}))\,\Big\vert\, X_{0} = x, A_{0} = a \right], 
\end{aligned}
\end{equation*}
\end{proof}

\begin{proof}[Lemma \ref{lemma:property_1}]
\begin{equation*}
\begin{aligned}
V^{\alpha}(x) 
&= f(x,\alpha(x),\mu^{\alpha}) + \mathbb{E} \left[ \sum_{n=1}^{\infty} \gamma^n f(X_{n}, \alpha(X_{n}),\mu^{\alpha})\,\Big\vert\, X_{0} = x, A_{0} = \alpha(x) \right]= \\ 
&= f(x,\alpha(x),\mu^{\tilde\alpha}) + \mathbb{E} \left[ \sum_{n=1}^{\infty} \gamma^n f(X_{n}, \alpha(X_{n}),\mu^{\alpha})\,\Big\vert\, X_{0} = x, A_{0} = \alpha(x) \right]= \\ 
&\myeq{(\ref{def_q_pi})}{=} Q^{\alpha}(x,\alpha(x)) 
\end{aligned}
\end{equation*}
where we used that the modification of $\alpha$ given the pair $(x,\alpha(x))$ is equal to $\alpha$ itself and consequently $\mu^{\alpha}=\mu^{\tilde\alpha}$.
\end{proof}

\begin{proof}[Theorem \ref{thm:policy_improvement_amfc}]
\textbf{Step 1} Show that $V^{\alpha}(x) < V^{\tilde{\alpha}}(x) $. \vspace{10pt}\newline We observe that
\begin{equation*}
\begin{aligned}
V^{\alpha}(x) &< Q^{\alpha}(x,\tilde{\alpha}(x))=\\
&\myeq{(\ref{eq:bellman_q_pi})}{=} f(x,\tilde{\alpha}(x),\mu^{\tilde{\alpha}}) + \gamma \mathbb{E} \left[ Q^{\alpha}(X_{1},\alpha(X_{1})) \,\Big\vert\, X_{0}=x, A_{0} = \tilde{\alpha}(x) \right] = \\
&\myeq{(\ref{eq:property_1})}{=}  f(x,\tilde{\alpha}(x),\mu^{\tilde{\alpha}}) + \gamma \mathbb{E} \left[ V^{\alpha}(X_{1}) \,\Big\vert\, X_{0}=x, A_{0} = \tilde{\alpha}(x) \right] \leq \\
&\myeq{(\ref{eq:hp_policy_improvement_amfc})}{\leq} f(x,\tilde{\alpha}(x),\mu^{\tilde{\alpha}}) + \gamma \mathbb{E} \left[ Q^{\alpha}(X_{1},  \tilde{\alpha}(X_{1})) \,\Big\vert\, X_{0}=x, A_{0} = \tilde{\alpha}(x) \right] = \\
&\myeq{(\ref{eq:bellman_q_pi})}{=} f(x,\tilde{\alpha}(x),\mu^{\tilde{\alpha}}) + \gamma \mathbb{E} \left[  f(X_{1},\tilde{\alpha}(X_{1}),\mu^{\tilde{\alpha}}) + \gamma Q^{\alpha}(X_{t_2},  \alpha(X_{t_2})) \,\Big\vert\, X_{0}=x, A_{0} = \tilde{\alpha}(x) \right] \leq \\
\vdots \\
&\leq \mathbb{E} \left[ \sum_{n=0}^{k} \gamma^n f(X_{n}, \tilde{\alpha}(X_{n}),\mu^{\tilde{\alpha}}) + \gamma^{k+1}V^{\alpha}(X_{k+1}) \,\Big\vert\, X_{0}=x \right]
\end{aligned}
\end{equation*}
Considering the limit as $k\rightarrow \infty$, it follows that 
\begin{equation*}
V^{\alpha}(x) <\mathbb{E} \left[ \sum_{n=0}^{\infty} \gamma^n f(X_{n}, \tilde{\alpha}(X_{n}),\mu^{\tilde{\alpha}}) \,\Big\vert\, X_{0}=x \right] = V^{\tilde{\alpha}}(x)
\end{equation*}

\noindent\textbf{Step 2} Show that $V^{\alpha}(x') < V^{\tilde{\alpha}}(x') \quad \forall x' \in \mathcal{X}\setminus \{x\}$. 
 \vspace{10pt}\newline Let define $\tau_x = \min \{ n : X_{n} = x \}$. Then
\begin{equation*}
\begin{aligned}
V^{\alpha}(x') &= \mathbb{E} \left[ \sum_{n=0}^{\infty} \gamma^n f(X_{n},\alpha(X_{n}),\mu^{\alpha} )\,\Big\vert\, X_{0} = x'\right] = \\
&= \mathbb{E} \left[ \sum_{n=0}^{\tau_x-1} \gamma^n f(X_{n},\alpha(X_{n}),\mu^{\alpha} )+ \sum_{n=\tau_x}^{\infty} \gamma^n f(X_{n},\alpha(X_{n}),\mu^{\alpha})\,\Big\vert\, X_{0} = x'\right] = \\
&= \mathbb{E} \left[ \sum_{n=0}^{\tau_x-1} \gamma^n f(X_{n},\alpha(X_{n}),\mu^{\alpha} )\,\Big\vert\, X_{0} = x' \right]+\mathbb{E} \left[ \sum_{n=\tau_x}^{\infty} \gamma^n f(X_{n},\alpha(X_{n}),\mu^{\alpha})\,\Big\vert\, X_{0} = x'\right] = \\
&\vcentcolon= T_1 + T_2
\end{aligned}
\end{equation*}
We start analyzing the first term observing that $X_{n} \neq x$ and $\alpha(X_{n}) = \tilde{\alpha}(X_{n})$ for all $n<=\tau_x - 1$. Then,
\begin{equation*}
T_1 = \mathbb{E} \left[ \sum_{n=0}^{\tau_x-1} \gamma^n f(X_{n},\tilde{\alpha}(X_{n}),\mu^{\tilde{\alpha}} )\,\Big\vert\, X_{0} = x' \right]
\end{equation*}
The analyses of the term $T_2$ is based on the tower property (TP), the Markov property (MP) and Step 1 (S1). It follows that 
\begin{equation*}
\begin{aligned}
T_2 &\myeq{\tiny{(TP)}}{=} \mathbb{E}\left[ \mathbb{E}\left[  \sum_{n=\tau_x}^{\infty} \gamma^n f(X_{n},\alpha(X_{n}),\mu^{\alpha}) \,\Big\vert\, X_{0}=x', X_{1}, \dots, X_{\tau_x} \right] \,\Big\vert\,X_{0} = x' \right] = \\
&\myeq{\tiny{(MP)}}{=} \mathbb{E}\left[ \gamma^{\tau_x}\mathbb{E}\left[  \sum_{n=\tau_x}^{\infty} \gamma^{n-\tau_x} f(X_{n},\alpha(X_{n}),\mu^{\alpha}) \,\Big\vert\,  X_{\tau_x} \right] \,\Big\vert\,X_{0} = x' \right] = \\
&= \mathbb{E}\left[ \gamma^{\tau_x} V^{\alpha}(X_{\tau_x}) \,\Big\vert\, X_{0} = x' \right] <\\
&\myeq{\tiny{(S1)}}{<}  \mathbb{E}\left[ \gamma^{\tau_x} V^{\tilde{\alpha}}(X_{\tau_x}) \,\Big\vert\, X_{0} = x' \right] 
\end{aligned}
\end{equation*}
Combining the analyses of $T_1$ and $T_2$, it follows that
\begin{equation*}
\begin{aligned}
V^{\alpha}(x') &= T_1 + T_2 <\\
&< \mathbb{E} \left[ \sum_{n=0}^{\tau_x-1} \gamma^n f(X_{n},\tilde{\alpha}(X_{n}),\mu^{\tilde{\alpha}} )\,\Big\vert\, X_{0} = x' \right]+\mathbb{E}\left[ \gamma^{\tau_x} V^{\tilde{\alpha}}(X_{\tau_x}) \,\Big\vert\, X_{0} = x' \right]\\
&=  \mathbb{E} \left[ \sum_{n=0}^{\tau_x-1} \gamma^n f(X_{n},\tilde{\alpha}(X_{n}),\mu^{\tilde{\alpha}} )+\gamma^{\tau_x}\sum_{n=\tau_x}^{\infty} \gamma^{n-\tau_x} f(X_{n},\tilde{\alpha}(X_{n}),\mu^{\tilde{\alpha}})\,\Big\vert\, X_{0} = x' \right]=\\
& =\mathbb{E} \left[ \sum_{n=0}^{\infty} \gamma^n f(X_{n},\tilde{\alpha}(X_{n}),\mu^{\tilde{\alpha}} )\,\Big\vert\, X_{0} = x' \right]=\\
&=V^{\tilde{\alpha}}(x')
\end{aligned}
\end{equation*}
\end{proof}

\begin{proof}[Theorem \ref{thm:property_3}]
Let $\mathcal{X}=\{ x_1, \dots, x_n\}$ and $\mathcal{A}=\{ a_0, \dots, a_m\}$ be the state and action spaces. \newline \textbf{Step 1} Let $\alpha^0$ be an initial policy and define $\alpha^1$ as follows
\begin{equation*}
\alpha^1(x) = 
\begin{cases}
\arg\max_a Q^{\alpha^0}(x,a), \quad &\text{ if } x = x_1,\\
\alpha_0(x), \quad &\text{ o.w. }
\end{cases} 
\end{equation*}
Then, 
\begin{equation*}Q^{\alpha^0}(x_1,\alpha^1(x_1)) \geq V^{\alpha^0}(x_1) \myeq{(\ref{eq:policy_improvement_amfc})}{\implies} V^{\alpha^1}(x) \geq V^{\alpha^0}(x),  \quad \forall x
\end{equation*}
\textbf{Step 2} 
Consider $\alpha^2$ defined as follows
\begin{equation*}
\begin{aligned}
\alpha^2(x) &= 
\begin{cases}
\arg\max_a Q^{\alpha^1}(x,a), \quad &\text{ if } x = x_2,\\
\alpha_1(x), \quad &\text{ o.w. }  
\end{cases} \\
&=\begin{cases}
\arg\max_a Q^{\alpha^1}(x,a), \quad &\text{ if } x = x_2,\\
\arg\max_a Q^{\alpha^0}(x,a), \quad &\text{ if } x = x_1,\\
\alpha_0(x), \quad &\text{ o.w. } 
\end{cases}
\end{aligned}
\end{equation*}
Then, 
\begin{equation*}Q^{\alpha^1}(x_2,\alpha^2(x_2)) \geq V^{\alpha^1}(x_1) \myeq{(\ref{eq:policy_improvement_amfc})}{\implies} V^{\alpha^2}(x) \geq V^{\alpha^1}(x) \geq V^{\alpha^0}(x),  \quad \forall x
\end{equation*}
\textbf{Step $\boldsymbol{n}$} 
Consider $\alpha^{n}$ defined as follows
\begin{equation*}
\begin{aligned}
\alpha^n(x) &= 
\begin{cases}
\arg\max_a Q^{\alpha^{n-1}}(x,a), \quad &\text{ if } x = x_n,\\
\alpha_{n-1}(x), \quad &\text{ o.w. }  
\end{cases} \\
&=\arg\max_a Q^{\alpha^{k-1}}(x,a), \quad\quad\quad \text{ if } x = x_k, \text{ for } k =1,\dots, n ,
\end{aligned}
\end{equation*}
Then, 
\begin{equation*}Q^{\alpha^{n-1}}(x_n,\alpha^n(x_n)) \geq V^{\alpha^{n-1}}(x_n) \myeq{(\ref{eq:policy_improvement_amfc})}{\implies} V^{\alpha^n}(x) \geq V^{\alpha^{n-1}}(x) \geq V^{\alpha^0}(x),  \quad \forall x
\end{equation*}
\textbf{Step $\boldsymbol{N}$} Since the state and action spaces are finite, the policy can be improved only a finite number of times. In other words, $\exists N>0$ such that
\begin{equation*}
\alpha^{N}(x) = \arg\max_a Q^{\alpha^N}(x,a), \quad \forall x \in \mathcal{X}
\end{equation*}
and 
\begin{equation*}
V^{\alpha^N}(x) = Q^{\alpha^N}(x,\alpha^N(x)) = \max_a Q^{\alpha^N}(x,a), \quad \forall x \in \mathcal{X}.
\end{equation*}
Can $\alpha^N$ be still suboptimal? No,  by extending Bellman and Dreyfus's Optimality Theorem (1962), \cite{bellman2015applied}.
\end{proof}

\begin{proof}[Theorem (\ref{thm: bellman A-MFC})]
\begin{equation*}
\begin{aligned}
\text{RHS} &= f(x,a, \mu^{\tilde{\alpha}}) + \gamma \mathbb{E} \left[\max_{a^{\prime}} Q^*(X_{1},a^{\prime})\,\Big\vert\, X_{0} = x, A_{0} = a\right] = \\
&\myeq{(\ref{eq:property_3})}{=}f(x,a, \mu^{\tilde{\alpha}}) + \gamma \mathbb{E} \left[V^*(X_{1})\,\Big\vert\, X_{0} = x, A_{0} = a\right]\\
&\myeq{(\ref{eq:property_1})}{=}f(x,a, \mu^{\tilde{\alpha}}) + \gamma \mathbb{E} \left[Q^{\alpha^*}(X_{1},\alpha^*(X_{1}))\,\Big\vert\, X_{0} = x, A_{0} = a\right]=\\
&\myeq{(\ref{eq:bellman_q_pi})}{=} Q^{\alpha^*}(x,a) =  Q^*(x,a),
\end{aligned}
\end{equation*}
where the last step is due to what shown in the proof of equation (\ref{eq:property_3}), i.e. the same policy $\alpha^*$ optimizes  $V^{\alpha}$ and $Q^{\alpha}$.
\end{proof}

\end{document}